%-----------------------------------------------------------------------------
% trig-Kaballah
%-----------------------------------------------------------------------------
\documentclass[twoside]{amsart}

\usepackage{amsmath,amssymb,amsfonts,amsthm,latexsym}
\usepackage{amscd,graphicx,color,enumerate}
\usepackage{hyperref}
\usepackage[margin=1in]{geometry}
\usepackage{mathrsfs}
\usepackage{tikz,url}
\numberwithin{equation}{section}
\usepackage{shuffle}
\usepackage{stmaryrd}
\usepackage{stackengine}
\stackMath
\usepackage{orcidlink}

\newtheorem{theorem}{Theorem}[section]
\newtheorem{lemma}[theorem]{Lemma}
\newtheorem{proposition}[theorem]{Proposition}
\newtheorem{corollary}[theorem]{Corollary}
\newtheorem{conjecture}[theorem]{Conjecture}

 % "letter-numbered" theorems

\theoremstyle{definition}
\newtheorem{definition}[theorem]{Definition}

\theoremstyle{remark}

\newcommand{\Z}{\mathbb{Z}}

%%%

 %
 %
 %

\begin{document}

\title{On the numerology of trigonometric polynomials}

\author{Hans-Christian Herbig\,\orcidlink{0000-0003-2676-3340}}
\email{herbighc@im.ufrj.br}
\address{Departamento de Matem\'{a}tica Aplicada, Universidade Federal do Rio de Janeiro,
Av. Athos da Silveira Ramos 149, Centro de Tecnologia - Bloco C, CEP: 21941-909 - Rio de Janeiro, Brazil}
\author{Mateus de Jesus Gonçalves}
\email{mateusdejesusgoncalves@gmail.com}
\address{Departamento de Matem\'{a}tica Aplicada, Universidade Federal do Rio de Janeiro,
Av. Athos da Silveira Ramos 149, Centro de Tecnologia - Bloco C, CEP: 21941-909 - Rio de Janeiro, Brazil}

\subjclass[2020]{Primary 42A05
; Secondary 05A10}
\keywords{Chebyshev polynomials, power reduction formulas, trigonometric polynomials, Catalan triangle, pyramidal numbers, spread polynomials, Riordan arrays, super Catalan numbers, golden ratio}

\begin{abstract} 
This note is about the observation that the various transition formulas between bases of trigonometric polynomials can be expressed in terms binomial coefficients. More specifically, we write the entries of the Chebyshev matrices $ T$ and $ U$ in terms of binomial coefficients. We remark that the inverses of certain submatrices of the Chebyshev matrix $ T$ correspond to classical power reduction formulas for trigonometric functions. Therefore the entries of $ T^{-1}$ are expressible in terms of binomial coefficients and powers of two. The inverses of certain submatrices of the Chebyshev matrix $ U$ appear in power reduction formulas involving the Catalan triangles. We could not spot them in the literature. As a corollary the Catalan triangles are interpreted as inverses of rather natural matrices of binomial coefficients. We explain the matrix inversions in terms of Riordan arrays.
We solve the integral $ \int _{0}^{2\pi }\cos^{2m}( x)\sin^{2n}( x) dx$ and deduce a hypergeometric identity from the Fourier expansion of $\cos^{2m}(x)$ and $ \sin^{2n}( x)$. As a corollary we prove that super Catalan numbers are integers. We emphasize that from the point of view of these base changes between the trigonometric polynomials it is more natural to work with $2\cos(x)$ and $2\sin(x)$ instead of  $\cos(x)$ and $\sin(x)$, since the transition matrices become invertible over the integers. We do a similar analysis for the spread polynomials. This  enables us to make a conjecture of Goh and Wildberger more precise.
In this exposition we give a mostly self-contained account of the matter, the prerequisites just being four semesters of calculus, linear algebra, elementary complex variables and the residue theorem.
 \end{abstract}
\maketitle

% 2^n 呈指数增长。毛泽东 

\tableofcontents

% xxxxxxxxxxxxxxxxxxxxxxxxxxxxxxxxxxxxxxxxxxxxxxxxxxxxxxxxxxxxxxxxxxxxxxxxx
% xxxxxxxxxxxxxxxxxxxxxxxxxxxxxxxxxxxxxxxxxxxxxxxxxxxxxxxxxxxxxxxxxxxxxxxxx
% xxxxxxxxxxxxxxxxxxxxxxxxxxxxxxxxxxxxxxxxxxxxxxxxxxxxxxxxxxxxxxxxxxxxxxxxx

\section{Introduction}
\label{sec:Intro}

When teaching basic calculus and trigonometry the educator is faced with the challenge
that he or she cannot employ in a systematic way binomial coefficients, as Pascal's triangle is said to be too complicated for the modern student. On the other hand, since the advent of the Wilf-Zeilberger \cite{AequalsB} method binomial coefficients  are considered to be too shallow to be a subject of serious research. In the end, nobody appears to be responsible for this old subject, except the Online Encyclopaedia of Integer Sequences (OEIS) \cite{OEIS}. This invaluable resource contains no proofs however, but merely references. To find proofs of classical formulas in the literature can be a tedious task.

In this article we take the opportunity to discuss a number of trigonometric formulas
 that arise when doing base changes between the various bases of trigonometric polynomials. The formulas are all classical, even though the ones involving Catalan triangles we were not able to spot in the literature. We do not know of a reference where they are discussed in context and proofs are provided. Moreover, we try to exhibit the appearance of binomial coefficients as succinctly as possible and deduce the relevant simple hypergeometric identities behind the inversions. Among other things, we point out the close relation between the coefficients of the Chebyshev %\footnote{
 %For those who would like to know how to pronounce Пафну́тий Льво́вич Чебышёв, so that they cannot be identified as slavophob: it is \textipa{[pɐfˈnutʲɪj ˈlʲvovʲɪtɕ tɕɪbɨˈʂof].
 %Fortunately, everybody speaks IPA nowadays.} 
 polynomials and higher dimensional pyramidal numbers, a link we stumbled into while contemplating on the OEIS. 
 This is closely related to the observation that the base change formulas simplify when working with $2\cos(x)$ and $2\sin(x)$ instead of  $\cos(x)$ and $\sin(x)$ (but remember to keep the $1$ !).
We do a similar analysis for Norman Wildberger's spread polynomials that play a central role in his rational trigonometry \cite{Wildberger}. We express them in terms of the even-dimensional pyramidal numbers. The most elegant point of view on our matrix inversions is provided by the language of Riordan arrays \cite{ShapRiordan}, relating generating functions and matrix multiplication. Since we got rid of the annoying factors of four, we cleared the view to make a conjecture of Goh and Wildberger on the factorization of spread polynomials more precise. 

The paper is organized as follows. In Section \ref{sec:TU} we review the definitions of the Chebyshev polynomials. In Section \ref{sec:trigpol} we recall the notion of a trigonometric polynomial. In Section \ref{sec:pyr} we discover the higher pyramidal numbers and binomial coefficients in the coeficients of the Chebyshev polynomials. In Section \ref{sec:Mn} we propose an easy way to deduce quickly the first few Chebyshev polynomials $T_n$ on a sheet of paper. In Section \ref{sec:cat} (the only section in this paper where we refer to other sources) we recall the definition of the even and odd Catalan triangles. In Section \ref{sec:pr} we provide proofs of the well-known power reduction formulas and not-so-well-known variants thereof. In Section \ref{sec:inv} we relate the Chebyshev polynomials to the power reduction formulas via matrix inversion. In Section \ref{sec:riordan} we provide another proof of the matrix inversions using Riordan arrays. In Section \ref{sec:fc} we deduce further hypergeometric identities evaluating Fourier integrals and deduce the integrality of the super Catalan numbers. In Section \ref{sec:spread} we once again go through the laundry list of the paper for the case of the spread polynomials. In Section \ref{sec:GW} elaborate on a conjecture of Goh and Wildberger and provide some empirical evidence.
 
\vspace{2mm}

\noindent\textbf{Acknowledgements.} The paper arose from HCH teaching Calculus and Linear Algebra. The more HCH dug into the material he came to the realize that it should be known to Louis Shapiro in some form or another.
The paper is part of an undergraduate research project of MdJG. 
He is acknowledging financial support of PROFAEX.
We greatly benefited from discussions with Daniel Herden. HCH wants to thank A.-K. Gallagher for disposing \cite{Polya} on his bookshelf. HCH thanks his administration at UFRJ for forcing him to teach beginner's courses for a decade, turning him into an \emph{Elephant im Porzellanladen} (He really tried to teach them Eqn. \eqref{eq:weirdhyp}, but they do not want to listen!).

\vspace{2mm}
\noindent\textbf{Statements and Declarations.} The authors declare that they have no conflicts of interest. 
%HCH is a member of the \emph{Centro Brasileiro de Geometria} of the \emph{Programa Institutos Nacionais de Ciência e Tecnologia (INCT)}. 

\section{Chebyshev polynomials}
\label{sec:TU}

For convenience of the reader we review here some basic facts about the Chebyshev polynomials (see, e.g., \cite[Exercise VI.2]{Polya}). The Chebyshev polynomials  of the first kind $(T_n(x))_{n\geq 0}$ and of the second kind $(U_n(x))_{n\geq 0}$ are defined by the recursions
\begin{align}
\nonumber   T_{0}( x) &=1,\ T_{1}( x) =x,\\
\label{eqn:Trec} T_{n}( x) &=2xT_{n-1}( x) -T_{n-2}( x) \ \mbox{ for $n\geq 2$},\\
\nonumber U_{0}( x) &=1,\ U_{1}( x) =2x,\\
\label{eqn:Urec} U_{n}( x) &=2xU_{n-1}( x) -U_{n-2}( x) \ \mbox{ for $n\geq 2$}.
\end{align}

\begin{proposition}
\begin{align}
\label{eqn:ChebGFT}\sum _{n=0}^{\infty } T_{n}( x) t^{n}&=\frac{1-tx}{1-2tx+t^{2}}\\
\label{eqn:ChebGFU}\sum _{n=0}^{\infty } U_{n}( x) t^{n}&=\frac{1}{1-2tx+t^{2}}
\end{align}
\end{proposition}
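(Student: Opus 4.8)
The plan is to prove both generating function identities by exploiting the common recursion. Since $T_n$ and $U_n$ satisfy the same three-term recurrence $a_n = 2x\,a_{n-1} - a_{n-2}$ for $n \geq 2$, differing only in the initial data, I would package the generating function $F(x,t) = \sum_{n\geq 0} a_n(x) t^n$ and translate the recursion into a functional equation. Concretely, I would multiply the recursion by $t^n$ and sum over $n \geq 2$: the left side gives $F - a_0 - a_1 t$, while the right side gives $2xt(F - a_0) - t^2 F$. This yields
\begin{equation*}
F(x,t)\bigl(1 - 2xt + t^2\bigr) = a_0 + a_1 t - 2xt\,a_0.
\end{equation*}
For the first kind, $a_0 = 1$ and $a_1 = x$, so the right side is $1 + xt - 2xt = 1 - xt$, giving \eqref{eqn:ChebGFT}. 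For the second kind, $a_0 = 1$ and $a_1 = 2x$, so the right side is $1 + 2xt - 2xt = 1$, giving \eqref{eqn:ChebGFU}.

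The only subtlety worth flagging is the matter of convergence: one should either work in the ring of formal power series $\Q[x][[t]]$ (or $\C[[t]]$ for fixed $x$), where the manipulation is purely algebraic and no convergence is needed, or note that for fixed $x$ the series converges for $|t|$ sufficiently small since $|T_n(x)|$ and $|U_n(x)|$ grow at most geometrically in $n$ (immediate from the recursion by induction). I would opt for the formal power series framing to keep things clean: the identity $(1 - 2xt + t^2)$ is a unit in $\Q[x][[t]]$ since its constant term is $1$, so dividing through is legitimate.

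An alternative I would mention in passing — and which foreshadows the Riordan array perspective promised in Section~\ref{sec:riordan} — is to verify the identities directly by expanding the right-hand sides as geometric-type series and checking that the coefficient of $t^n$ satisfies the defining recursion and initial conditions; but the functional-equation approach above is shorter and more transparent.

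I do not anticipate a genuine obstacle here: the result is classical and the proof is a two-line generating-function manipulation. The "hardest" part is merely bookkeeping the initial terms $a_0 + a_1 t - 2xt\,a_0$ correctly for each of the two cases and observing the pleasant cancellation in the $U$ case.
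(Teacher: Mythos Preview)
Your proof is correct and follows essentially the same approach as the paper: both arguments multiply the generating function by $1-2xt+t^{2}$ and use the recursion to reduce the product to the initial terms. The paper carries this out for $T_{n}$ explicitly and declares the $U_{n}$ case similar, whereas you handle both at once via a generic $a_{n}$; your remarks on the formal power series setting are a welcome addition but not present in the original.
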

\begin{proof} From
     \begin{align*}
\left( 1-2tx+t^{2}\right)\sum _{n\geq 0} T_{n}( x) t^{n} &=\sum _{n\geq 0} T_{n}( x) t^{n} -\sum _{n\geq 0} T_{n}( x) 2t^{n+1}x +\sum _{n\geq 0} T_{n}( x) t^{n+2}\\
&=\sum _{n\geq 0} T_{n}( x) t^{n} -\sum _{m\geq 1} T_{m-1}( x)2t^{m}x +\sum _{m\geq 2} T_{m-2}( x) t^{m}\\
&=1+tx-2tx+\sum _{m\geq 2}\left( T_{m}( x) -2xT_{m-1}( x) + T_{m-2}( x)\right) t^{m}=1-tx
\end{align*}
we deduce Eqn. \eqref{eqn:ChebGFT}; the proof of \eqref{eqn:ChebGFU} is similar.
\end{proof}
\begin{theorem} For each integer $n\geq 0$ we have
\begin{align*}
    T_{n}(\cos( \theta)) &=\cos(n\theta) ,\qquad
    T_{n}(\sin( \theta)) =( -1)^{\lfloor n/2\rfloor }\begin{cases}
\cos( n\theta) & \mbox{if} \ n\ is\ \mbox{even}\\
\sin( n\theta) & \mbox{if} \ n\ is\ \mbox{odd}
\end{cases} ,\\
U_{n}(\cos( \theta)) &=\frac{\sin(( n+1) \theta)}{\sin(\theta)},\qquad
    U_{n}(\sin( \theta))=( -1)^{\lfloor n/2\rfloor }\begin{cases}
\frac{\cos(( n+1)\theta)}{\cos( \theta )} & n\ \text{even}\\
\frac{\sin(( n+1)\theta )}{\cos( \theta )} & n\ \text{odd}
\end{cases}.
\end{align*}
\end{theorem}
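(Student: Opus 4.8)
The plan is to prove the two identities involving $\cos\theta$ first and then to deduce the two identities involving $\sin\theta$ from them by the substitution $\sin\theta=\cos(\pi/2-\theta)$.

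\emph{Step 1 (the cosine identities).} I would establish $T_n(\cos\theta)=\cos(n\theta)$ and $U_n(\cos\theta)=\sin((n+1)\theta)/\sin\theta$ simultaneously by induction on $n$ from the recursions \eqref{eqn:Trec} and \eqref{eqn:Urec}. The base cases $n=0,1$ are immediate ($T_0=1$, $T_1(\cos\theta)=\cos\theta$, $U_0=1=\sin\theta/\sin\theta$, $U_1(\cos\theta)=2\cos\theta=\sin(2\theta)/\sin\theta$). For the inductive step the only trigonometric inputs are the two product-to-sum identities $2\cos\theta\cos((n-1)\theta)=\cos(n\theta)+\cos((n-2)\theta)$ and $2\cos\theta\sin(n\theta)=\sin((n+1)\theta)+\sin((n-1)\theta)$; feeding the inductive hypothesis into $T_n(\cos\theta)=2\cos\theta\,T_{n-1}(\cos\theta)-T_{n-2}(\cos\theta)$, and similarly for $U_n$, closes the induction. (An alternative I would mention is to read both identities off the generating functions \eqref{eqn:ChebGFT}--\eqref{eqn:ChebGFU}: at $x=\cos\theta$ one has $1-2tx+t^2=(1-te^{i\theta})(1-te^{-i\theta})$, and a partial-fraction decomposition turns the right-hand sides into $\tfrac12\sum_n(e^{in\theta}+e^{-in\theta})t^n$ and $\sum_n\frac{e^{i(n+1)\theta}-e^{-i(n+1)\theta}}{e^{i\theta}-e^{-i\theta}}t^n$.) One should record that the formulas for $U_n(\cos\theta)$ are to be understood for $\sin\theta\neq0$ (the general case following by continuity), and likewise the $\sin\theta$-formulas below for $\cos\theta\neq0$.

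\emph{Step 2 (cosine $\Rightarrow$ sine).} Using $\sin\theta=\cos(\pi/2-\theta)$ and Step 1,
\[
T_n(\sin\theta)=\cos\!\big(n(\pi/2-\theta)\big)=\cos(n\pi/2)\cos(n\theta)+\sin(n\pi/2)\sin(n\theta),
\]
\[
U_n(\sin\theta)=\frac{\sin\!\big((n+1)(\pi/2-\theta)\big)}{\cos\theta}=\frac{\sin((n+1)\pi/2)\cos((n+1)\theta)-\cos((n+1)\pi/2)\sin((n+1)\theta)}{\cos\theta}.
\]
Now I would invoke the elementary fact that $\sin(m\pi/2)$ and $\cos(m\pi/2)$ vanish in alternation according to the parity of $m$, the surviving one being $(-1)^{\lfloor m/2\rfloor}$ (so $\cos(m\pi/2)=(-1)^{\lfloor m/2\rfloor}$, $\sin(m\pi/2)=0$ for $m$ even, and $\sin(m\pi/2)=(-1)^{\lfloor m/2\rfloor}$, $\cos(m\pi/2)=0$ for $m$ odd). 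Applying this with $m=n$ gives $T_n(\sin\theta)=(-1)^{\lfloor n/2\rfloor}\cos(n\theta)$ for $n$ even and $(-1)^{\lfloor n/2\rfloor}\sin(n\theta)$ for $n$ odd, which is the assertion. Applying it with $m=n+1$ in the formula for $U_n(\sin\theta)$ gives $(-1)^{\lfloor (n+1)/2\rfloor}\cos((n+1)\theta)/\cos\theta$ for $n$ even; for $n$ odd only the second term survives, and here one uses $\lfloor (n+1)/2\rfloor=\lfloor n/2\rfloor+1$ so that $-\cos((n+1)\pi/2)=-(-1)^{\lfloor n/2\rfloor+1}=(-1)^{\lfloor n/2\rfloor}$, yielding $(-1)^{\lfloor n/2\rfloor}\sin((n+1)\theta)/\cos\theta$.

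I expect Step 1 to be completely routine; the only place that calls for care is the sign bookkeeping at the end of Step 2 — reconciling $\cos(m\pi/2)$, $\sin(m\pi/2)$ (whose pattern has period $4$ in $m$) and the two floor functions $\lfloor n/2\rfloor$, $\lfloor(n+1)/2\rfloor$ with the even/odd split in the statement, in particular the sign flip in the $n$-odd case for $U_n$ that precisely cancels the minus sign coming from the angle-subtraction formula. Writing out a small table of the residues of $n$ modulo $4$ is the cleanest way to present this without sign errors.
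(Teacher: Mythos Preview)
Your proof is correct and, in fact, more complete than the paper's. The paper only proves the two cosine identities; the two sine identities are stated but never derived. Your Step~2 fills that gap cleanly via $\sin\theta=\cos(\pi/2-\theta)$ and a careful parity check on $\cos(m\pi/2)$, $\sin(m\pi/2)$. (One tiny cosmetic point: when $n$ is even you obtain the exponent $\lfloor(n+1)/2\rfloor$ for $U_n$; you might note explicitly that $\lfloor(n+1)/2\rfloor=\lfloor n/2\rfloor$ in that case, to match the statement literally.)

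For the cosine identities themselves, your route and the paper's differ. You feed the defining recursions \eqref{eqn:Trec}, \eqref{eqn:Urec} directly into the product-to-sum laws $2\cos\theta\cos((n-1)\theta)=\cos(n\theta)+\cos((n-2)\theta)$ and $2\cos\theta\sin(n\theta)=\sin((n+1)\theta)+\sin((n-1)\theta)$, which closes the induction in one line each. The paper instead uses the generating functions \eqref{eqn:ChebGFT}, \eqref{eqn:ChebGFU} to derive the mixed polynomial identities $xT_{n-1}(x)+(x^2-1)U_{n-2}(x)=T_n(x)$ and $T_n(x)+xU_{n-1}(x)=U_n(x)$, and then matches these against the addition theorems for $\cos(n\theta)$ and $\sin((n+1)\theta)$. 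The paper's approach has the side benefit of exhibiting those two useful relations between $T_n$ and $U_n$, but your argument is shorter, entirely self-contained, and treats $T$ and $U$ in parallel rather than intertwined.
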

\begin{proof}
We give an inductive proof of $T_{m}(\cos( \theta)) =\cos(m\theta)$ and $\sin(\theta)U_{m}(\cos( \theta)) =\sin(m\theta)$. The start of the induction is obvious.  
Let us assume that these identities hold for $m=n-1$ and $m=n-2$.
Let us put for simplicity $U_{m}( x) =0=T_{m}( x)$ for $m<0$. From
\begin{align*}
\sum _{n\geq 0} xT_{n-1}( x) +\sum _{n\geq 0}\left( x^{2} -1\right) U_{n-2}( x) t^{n}=\sum _{n\geq 0} xT_{n}( x) t^{n+1} +\sum _{n\geq 0}\left( x^{2} -1\right) U_{n}( x) t^{n+2}\\
=xt\frac{1-tx}{1-2tx+t^{2}} +t^{2}\left( x^{2} -1\right)\frac{1}{1-2tx+t^{2}} =\frac{xt-t^{2}}{1-2tx+t^{2}} =\frac{-1+2tx-t^{2} +1-xt}{1-2tx+t^{2}}\\
=\frac{1-tx}{1-2tx+t^{2}} -1=\sum _{n\geq 1} T_{n}( x) t^{n},
\end{align*}
we see that $xT_{n-1}( x) +\left( x^{2} -1\right) U_{n-2}( x) =T_{n}( x)$ for $n\geq 1$. Similarly, we observe that
\begin{align*}
\sum _{n\geq 0} (T_{n}( x) t^{n} + xU_{n-1}( x)) t^{n} =\sum _{n\geq 0} T_{n}( x) t^{n} +\sum _{n\geq 0} xU_{n}( x) t^{n+1}
=\frac{1-tx}{1-2tx+t^{2}} +\frac{tx}{1-2tx+t^{2}} =\sum _{n\geq 0} U_{n}( x) t^{n},
\end{align*}
so that $T_{n}( x) +xU_{n-1}( x) =U_{n}( x)$ for $n\geq 0$. But, using the Addition Theorem for cosine,
\begin{align*}
\cos( n\theta ) =\cos( \theta )\cos(( n-1) \theta ) -\sin( \theta )\sin(( n-1) \theta ) =\cos( \theta ) T_{n-1}(\cos( \theta )) -\sin^{2}( \theta ) U_{n-2}(\cos( \theta )),
\end{align*}
the first relation proves $ T_{n}(\cos( \theta )) =\cos( n\theta )$.
Moreover, the second relation in combination with the Addition Theorem for sine
\begin{align*}
\sin(( n+1) \theta ) =\cos( \theta )\sin( n\theta ) +\sin( \theta )\cos( n\theta ) =\sin( \theta )(\cos( \theta ) U_{n-1}(\cos( \theta )) +T_{n}(\cos( \theta )))
\end{align*}
shows $ \sin( \theta ) U_{n}(\cos( \theta )) =\sin(( n+1) \theta )$.
\end{proof}

\section{Trigonometric polynomials}
\label{sec:trigpol}

A \textit{trigonometric polynomial} of degree $ \leq d$ in the variable $\theta$ is a univariate real function of the form
$ a_{0} +\sum _{n=1}^{d} a_{n}\cos( \theta ) +\sum _{n=1}^{d} b_{n}\sin( n\theta )$ with $ a_{0} ,a_{i} ,b_{i} \in \mathbb{R}$ for $ i=1,2,\dotsc ,d$.
The $ ( 2d+1)$-dimensional real vector space of trigonometric polynomials of degree $ \leq d$ \ will be denoted by $ \operatorname{Trig}_{\leq d}$. By $ 2\pi $-periodicity an $ f\in \operatorname{Trig}_{\leq d}$ is uniquely determined by its restriction to the interval $ [ -\pi ,\pi ]$. There is a inner product\footnote{In Fourier analysis it is more convenient to work with the inner product $2\langle f,g\rangle$.} 
\begin{align*}
\langle f,g\rangle =\frac{1}{2\pi }\int _{-\pi }^{\pi } f( \theta ) g( \theta ) d\theta 
\end{align*}
making $ \operatorname{Trig}_{\leq d}$ into an euclidean vector space. There is an orthogonal decomposition $ \operatorname{Trig}_{\leq d} =\operatorname{Trig}_{\leq d}^{0} \oplus \operatorname{Trig}_{\leq d}^{1}$ into even and odd trigonometric polynomials $ \operatorname{Trig}_{\leq d}^{j} :=\left\{f\in \operatorname{Trig}_{\leq d} |f( -\theta ) =( -1)^{j} f( \theta )\right\}$, $ j=0,1$. It is well-known that 
\begin{align*}
\begin{bmatrix}
1 & \sqrt{2}\cos( \theta ) & \sqrt{2}\cos( 2\theta ) &\sqrt{2} \cos( 3\theta ) & \dotsc  & \sqrt{2}\cos( d\theta )
\end{bmatrix}
\end{align*} 
forms an orthonormal basis for $ \operatorname{Trig}_{\leq d}^{0}$ and 
\begin{align*}
\begin{bmatrix}
\sqrt{2}\sin( \theta ) & \sqrt{2}\sin( 2\theta ) & \sqrt{2} \sin( 3\theta ) &  \dotsc  & \sqrt{2}\sin( d\theta )
\end{bmatrix}
\end{align*} 
forms an orthonormal basis for $ \operatorname{Trig}_{\leq d}^{1}$. In this paper we find it convenient to view a basis as a frame, i.e., a row of vectors with trivial kernel. We can understand the space of all trigonometric polynomials as the direct limit $ \operatorname{Trig} :=\cup _{d\geq 0}\operatorname{Trig}_{\leq d}$ with a decomposition $ \operatorname{Trig} =\operatorname{Trig}^{0} \oplus \operatorname{Trig}^{1}$, $ \operatorname{Trig}^{j} :=\cup _{d\geq 0}\operatorname{Trig}_{\leq d}^{j}$. It follows that
$ \begin{bmatrix}
1 &  \sqrt{2}\cos( \theta ) &  \sqrt{2}\cos( 2\theta ) &  \sqrt{2}\cos( 3\theta ) & \dotsc 
\end{bmatrix}$ forms an orthonormal basis for $ \operatorname{Trig}^{0}$ and that $ \begin{bmatrix}
 \sqrt{2}\sin( \theta ) &  \sqrt{2}\sin( 2\theta ) &  \sqrt{2} \sin( 3\theta ) & \dotsc 
\end{bmatrix}$ forms an orthonormal basis for $ \operatorname{Trig}^{1}$.

There are a lot of other (non-orthogonal) bases for $ \operatorname{Trig}^{j} ,\ j=0,1$. In this paper we are concerned, e.g., with the bases
\begin{align*}\begin{bmatrix}
\cos^{0}( \theta ) & \cos^{1}( \theta ) & \cos^{2}( \theta ) & \cos^{3}( \theta ) & \dotsc 
\end{bmatrix} ,\ \begin{bmatrix}
\frac{\sin^{1}( \theta )}{\sin( \theta )} & \frac{\sin^{2}( \theta )}{\sin( \theta )} & \frac{\sin^{3}( \theta )}{\sin( \theta )} & \frac{\sin^{4}( \theta )}{\sin( \theta )} & \dotsc 
\end{bmatrix} 
\end{align*}
for $ \operatorname{Trig}^{0}$ and 
\begin{align*}
\begin{bmatrix}
\sin( x) & \sin^{2}( x) & \sin^{3}( x) & \dotsc 
\end{bmatrix}
\end{align*} for $ \operatorname{Trig}^{1}$. To verify that these form bases we refer the reader to the classical literature (see, e.g., \cite{Polya}). Alternatively, the claim also follows from the calculations of Section \ref{sec:riordan}.

The Chebyshev polynomials $ T_{n}$ provide, for example, a base change between the bases
\begin{align*}
 \begin{bmatrix}
\cos^{0}( \theta ) & \cos^{1}( \theta ) & \cos^{2}( \theta ) & \cos^{3}( \theta ) & \dotsc 
\end{bmatrix}\mbox{ and } \begin{bmatrix}
1 & \cos( \theta ) & \cos( 2\theta ) & \cos( 3\theta ) & \dotsc 
\end{bmatrix},
\end{align*}
while the Chebyshev polynomials $ U_{n}$ \ provide a base change between the bases
\begin{align*}\begin{bmatrix}
\frac{\sin^{1}( \theta )}{\sin( \theta )} & \frac{\sin^{2}( \theta )}{\sin( \theta )} & \frac{\sin^{3}( \theta )}{\sin( \theta )} & \frac{\sin^{4}( \theta )}{\sin( \theta )} & \dotsc 
\end{bmatrix}\mbox{ and } \begin{bmatrix}
1 & \cos( \theta ) & \cos( 2\theta ) & \cos( 3\theta ) & \dotsc 
\end{bmatrix}.
\end{align*}
In the interest of keeping the entries of the base change matrices as simple as possible we advocate (see Section \ref{sec:inv}) for working with the basis $ \begin{bmatrix}
1 & 2\cos( \theta ) & 2\cos( 2\theta ) & 2\cos( 3\theta ) & \dotsc 
\end{bmatrix}$ instead of $ \begin{bmatrix}
1 & \cos( \theta ) & \cos( 2\theta ) & \cos( 3\theta ) & \dotsc 
\end{bmatrix}$ and with \ $ \begin{bmatrix}
2\sin( \theta ) & 2\sin^{2}( \theta ) & 2\sin^{3}( \theta ) & \dotsc 
\end{bmatrix}$ instead of \ $ \begin{bmatrix}
\sin( \theta ) & \sin^{2}( \theta ) & \sin^{3}( \theta ) & \dotsc 
\end{bmatrix}$. For the kabbalist the Chebyshev polynomials themselves appear to be a bit convoluted, and are probably defined this way for historical reasons. 

There is also a more modern variation of the subject. The so-called spread polynomials, introduced by Norman Wildberger (see \cite{Wildberger}), mediate a base change between $ \begin{bmatrix}
\sin^{2}( \theta ) & \sin^{4}( \theta ) & \sin^{6}( \theta ) & \dotsc 
\end{bmatrix}$ and $ \begin{bmatrix}
\sin^{2}( \theta ) & \sin^{2}( 2\theta ) & \sin^{2}( 3\theta ) & \dotsc 
\end{bmatrix}$. Again, from the point of view of numerology it seems more natural to work with $ \begin{bmatrix}
4\sin^{2}( \theta ) & 16\sin^{4}( \theta ) & 64\sin^{6}( \theta ) & \dotsc 
\end{bmatrix}$ and $ \begin{bmatrix}
4\sin^{2}( \theta ) & 4\sin^{2}( 2\theta ) & 4\sin^{2}( 3\theta ) & \dotsc 
\end{bmatrix}$ instead. The details of the base change are to be addressed in Section \ref{sec:spread}.

\section{Chebyshev matrix and higher dimensional pyramidal numbers}
\label{sec:pyr}
We define the infinite matrix $T=(T_{mn})_{m,n\geq 0}$
\begin{align}\label{eqn:Tee}
\footnotesize
T:= \left[\begin{array}{ccccccccccccc}
1 &  & -1 &   & 1 &   & -1 &  & 1 &   & -1 &  & \cdots \\
 & 1 &  & -3 &  & 5 &   &-7  & & 9 & & -11   & \\
 &   & 2 &  & -8 &  & 18 &  & -32 &  & 50 &  & \\
 &   &   & 4 &  & -20 &  & 56 &  &-120  &  & 220 & \\
 &   &  &   & 8 &  & -48 &  & 160 &  & -400 &  & \\
 &   &  &   &  & 16 &  & -112 &  & 432 &  & -1232 & \\
 &   &  &   &  &  & 32 &  & -256  &  & 1120 &  & \\
 &   &   &  &  &  &  & 64 &  & -567  &  &2816   & \\
 &  &    &   &  &  &  &  & 128 &  & -1280 &  & \\
 &  &    &   &  &  &  &  &  & 256 &  & -2816 & \\
 &  &    &    &  &  &  &  &  &  & 512 &  & \\
 &  &    &    &  &  &  &  &  &  &  & 1024 & \\
\cdots  &    &  &  &  &  &  &  &  &  &  &  & \cdots 
\end{array}\right],
\end{align}
so that $\begin{bmatrix}
    T_0(x)&T_1(x)&T_2(x)&\dots
\end{bmatrix}=\begin{bmatrix}
    1&x&x^2&\dots
\end{bmatrix}T$, i.e., $T_n(x)=\sum_{m\geq 0}T_{mn}x^m$. We take the liberty to not annotate zero entries.

Let us start our analysis by making an empirical observation.
For $ i\geq 1$ we record in a table the sequence $ \left( p_{j}^{[ i]}\right)_{j\geq 0}$ of nonzero entries of $ T$ \ $ i$th row multiplied by $ ( -1)^{j} 2^{-i+1}$. The $ 0$th row does not yet fit into the pattern.
\begin{align*}
\begin{array}{ c|c c c c c c c|c }
i\backslash j & 0 & 1 & 2 & 3 & 4 & 5 & \cdots & \text{Entry in OEIS}\\
\hline
1\text{th row} & 1 & 3 & 5 & 7 & 9 & 11 & \cdots & \text{A005408}\\
2\text{nd row} & 1 & 4 & 9 & 16 & 25 & 36 &  & \text{A000290}\\
3\text{rd row} & 1 & 5 & 14 & 30 & 55 & 91 &  & \text{A000330}\\
4\text{th row} & 1 & 6 & 20 & 50 & 105 & 196 &  & \text{A002415}\\
5\text{th row} & 1 & 7 & 27 & 77 & 182 & 378 &  & \text{A005585}\\
\cdots  &  &  &  &  &  &  & \cdots & 
\end{array}
\end{align*}
The number sequence $ \left( p_{j}^{[ i]}\right)_{j\geq 0}$ is usually referred to as the \emph{$i$-dimensional pyramidal numbers} (we are sloppy here about the index shifts). Its generating function
$ p^{[ i]}( t) =\sum _{j} p_{j}^{[ i]} t^{j}$ is simply
\begin{align*}
    p^{[ i]}( t)=\frac{1+t}{( 1-t)^{i+1}} =\frac{2-( 1-t)}{( 1-t)^{i+1}} =\frac{2}{( 1-t)^{i+1}} -\frac{1}{( 1-t)^{i}} =\sum _{j\geq 0}\left( 2\binom{i+j}{j} -\binom{i+j-1}{j}\right) t^{j}.
\end{align*}

The matrix 
\begin{align}\label{eqn:U}
\footnotesize
U=\left[\begin{array}{ccccccccccc}
1 &  & -1 &  & 1 &  & -1 &  & 1 &  & \\
 & 2 &  & -4 &  & 6 &  & -8 &  & 10 & \\
 &  & 4 &  & -12 &  &  24&  & -40 &  & \\
 & &  & 8 &  & -32  &  & 80 &  & -160 & \\
 &  &  &  & 16 &  & -80 &  & 240 &  & \\
 &  &  &  &  & 32 &  & -192 &  & 672 & \\
 &  &  &  &  &  & 64 &  & -448  &  & \\
 &  &  &  &  &  &  & 128 &  & -1024 & \\
 &  &  &  &  &  &  &  & 256 &  & \\
 &  &  &  &  &  &  & &  & 512 & \\
 &  &  &  &  &  &  &  &  &  & \cdots 
\end{array}\right],
\end{align}
on the other hand, is defined such that $U_n(x)=\sum_{m\geq 0}U_{mn}x^m$. For $ i\geq 0$ we record in a table the sequence of nonzero entries of the $i$th row of $U$ multiplied by $(-1)^{j} 2^{-i}$,
\begin{align*}
\begin{array}{ c|c c c c c c c }
i\backslash j & 0 & 1 & 2 & 3 & 4 & 5 & \cdots \\
\hline
0\text{th row} & 1 & 1 & 1 & 1 & 1 & 1 & \cdots \\
1\text{st row} & 1 & 2 & 3 & 4 & 5 & 6 &  \\
2\text{nd row} & 1 & 3 & 6  & 10 & 15 & 21 &  \\
3\text{rd row} & 1 & 4 & 10 & 20 & 35 & 56 &  \\
4\text{th row} & 1 & 5 & 15 & 35 & 70 & 126 & \\
\cdots   &  &  &  &  &  & &\cdots  
\end{array},
\end{align*}
which apparently turns out to be the Pascal matrix.

We phrase our observation as follows (this type of formulas is known for quite some time \cite{Clenshaw}).
\begin{theorem}\label{thm:ChebPyr}
    The Chebyshev polynomials are given by the formulas
    \begin{align}
   \label{eq:Tpoly}     T_{2n}( x) =( -1)^{n} +\sum _{j=1}^{n}( -1)^{n+j} 2^{2j-1} p_{n-j}^{[ 2j]} x^{2j} ,\ \ \ T_{2n+1}( x) =\sum _{j=0}^{n}( -1)^{n+j} 2^{2j} p_{n-j}^{[ 2j+1]} x^{2j+1} ,\\
\label{eq:Upoly} U_{2n}( x) =\sum _{j=0}^{n}( -1)^{n+j} 2^{2j}\binom{n+j}{2j} x^{2j} ,\ \ \ U_{2n+1}( x) =\sum _{j=0}^{n}( -1)^{n+j}2^{2j+1}\binom{n+j+1}{2j+1} x^{2j+1} ,
    \end{align}
    where $n\geq 0$ is an integer.
\end{theorem}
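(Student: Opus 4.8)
The plan is to extract everything from the two generating functions of the Proposition, treating the $U$-polynomials first (where the answer is a pure array of binomial coefficients) and then bootstrapping to the $T$-polynomials. For the closed form of $U$, I would write $\dfrac{1}{1-2tx+t^2}=\dfrac{1}{1-t(2x-t)}=\sum_{m\geq 0}t^m(2x-t)^m$ and expand each $(2x-t)^m$ by the binomial theorem; collecting the coefficient of $t^N$, parametrized by the exponent $k$ of $-t$ so that $m=N-k$ and $0\le k\le\lfloor N/2\rfloor$, gives the classical identity $U_N(x)=\sum_{k=0}^{\lfloor N/2\rfloor}(-1)^k\binom{N-k}{k}(2x)^{N-2k}$. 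Taking $N=2n$ and $N=2n+1$, re-indexing by $j$ via $k=n-j$, and using $(-1)^{n-j}=(-1)^{n+j}$ together with the binomial symmetries $\binom{n+j}{n-j}=\binom{n+j}{2j}$ and $\binom{n+j+1}{n-j}=\binom{n+j+1}{2j+1}$, turns this into exactly the two formulas of \eqref{eq:Upoly}; in particular this confirms that the matrix $U$ of \eqref{eqn:U} is the Pascal matrix decorated with the recorded signs and powers of two.

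Next, comparing \eqref{eqn:ChebGFT} with \eqref{eqn:ChebGFU} gives $\sum_n T_n(x)t^n=(1-tx)\sum_n U_n(x)t^n$, hence $T_n(x)=U_n(x)-x\,U_{n-1}(x)$ for every $n\ge 0$ (with the convention $U_{-1}=0$ already used in the proof of the previous Theorem). I would then substitute the formula from \eqref{eq:Upoly} of the appropriate parity for $U_n$ and for $U_{n-1}$, shift the index in the $x\,U_{n-1}$ term by one so that both sums are expressed in the same power of $x$, and add. For $T_{2n}$ the constant term comes only from $U_{2n}$ and equals $(-1)^n$, while the coefficient of $x^{2j}$ with $j\ge 1$ becomes $(-1)^{n+j}2^{2j-1}\bigl(2\binom{n+j}{2j}-\binom{n+j-1}{2j-1}\bigr)$; for $T_{2n+1}$ the coefficient of $x^{2j+1}$ becomes $(-1)^{n+j}2^{2j}\bigl(2\binom{n+j+1}{2j+1}-\binom{n+j}{2j}\bigr)$.

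Finally, from the generating function $p^{[i]}(t)=\dfrac{1+t}{(1-t)^{i+1}}$ computed just before the statement one reads off $p^{[i]}_k=2\binom{i+k}{k}-\binom{i+k-1}{k}$; taking $(i,k)=(2j,n-j)$ and $(i,k)=(2j+1,n-j)$ and once more using $\binom{a}{b}=\binom{a}{a-b}$ identifies the two parenthesized expressions above with $p^{[2j]}_{n-j}$ and $p^{[2j+1]}_{n-j}$ respectively, which completes the proof of \eqref{eq:Tpoly}. The argument is wholly elementary; the only place needing genuine care is the bookkeeping of index shifts and signs — aligning the powers of $x$ coming from $U_n$ and from $x\,U_{n-1}$, tracking that $(-1)^{n-2+j}=(-1)^{n+j}$, and isolating the boundary term $j=0$ correctly. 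As an alternative to the first two steps one could instead verify directly that the right-hand sides of \eqref{eq:Tpoly} and \eqref{eq:Upoly} satisfy the defining recursions \eqref{eqn:Trec} and \eqref{eqn:Urec} with the correct initial data, but this needs a Pascal-type identity among the binomial combinations and is less transparent than the generating-function route.
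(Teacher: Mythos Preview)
Your argument is correct. You expand $\dfrac{1}{1-2tx+t^{2}}=\sum_{m\ge0}t^{m}(2x-t)^{m}$, read off the classical closed form $U_{N}(x)=\sum_{k}(-1)^{k}\binom{N-k}{k}(2x)^{N-2k}$, reindex, and then bootstrap to $T_{n}$ via $T_{n}=U_{n}-xU_{n-1}$; the resulting combination $2\binom{n+j}{2j}-\binom{n+j-1}{2j-1}$ (resp.\ $2\binom{n+j+1}{2j+1}-\binom{n+j}{2j}$) is exactly $p^{[2j]}_{n-j}$ (resp.\ $p^{[2j+1]}_{n-j}$) by the formula $p^{[i]}_{k}=2\binom{i+k}{k}-\binom{i+k-1}{k}$ displayed before the theorem, once one applies the symmetry $\binom{a}{b}=\binom{a}{a-b}$. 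All the index-juggling checks out.

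The paper takes a genuinely different route. Instead of extracting the explicit formula for $U_{N}$ and then deducing $T_{n}$ by the relation $T_{n}=U_{n}-xU_{n-1}$, it rescales from the outset by setting $P_{n}(z)=2T_{n}(z/2)$ and $V_{n}(z)=U_{n}(z/2)$, so that the bivariate generating functions become $\frac{1-t^{2}}{1-zt+t^{2}}$ and $\frac{1}{1-zt+t^{2}}$; a substitution $t\mapsto t/\sqrt{-1}$, $z\mapsto\sqrt{-1}z$ kills the signs, and then the double series is summed in the \emph{other} order, i.e.\ first over $m$ via $\sum_{m}\frac{(tz)^{m}(1+t^{2})}{(1-t^{2})^{m+1}}$, which identifies the coefficient array \emph{directly} as the pyramidal (resp.\ Pascal) triangle. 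Your approach is more pedestrian and self-contained---no complex substitution, no discussion of domain of convergence, just finite binomial sums and Pascal symmetry---at the cost of heavier index bookkeeping and of the pyramidal numbers appearing only at the last step as a post hoc identification. The paper's approach explains structurally \emph{why} the rows of $P$ are the generating functions $\frac{1+t}{(1-t)^{i+1}}$, which feeds directly into the mnemonic of Section~\ref{sec:Mn} and the Riordan-array interpretation of Section~\ref{sec:riordan}.
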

%In order to prove it we will evaluate the following residues.
%\begin{lemma}\label{lem:res}
%For $ t\in \mathbb{C}$ with $|t|<1$ and $i\geq 0$ we have
%\begin{align*}
 %   \frac{1+t}{( 1-t)^{i+1}}&=\frac{1}{2\pi \sqrt{-1}}\int _{C}\frac{1}{z^{i+1}}\left(\frac{1+t^{2}}{1-zt-t^{2}}\right) dz\\,
 %   \frac{1}{( 1-t)^{i+1}}&=\frac{1}{2\pi \sqrt{-1}}\int _{C}\frac{1}{z^{i+1}}\left(\frac{1}{1-zt-t^{2}}\right) dz\\,
%\end{align*}
%where $ C$ is a circle or radius $<|t|$ around $ 0\in \mathbb{C}$.
%\end{lemma}
\begin{proof}
The first step is to cleanse Eqn. \eqref{eqn:Tee} from its powers of $2$. The trick is to define
\begin{align}\label{eqn:Pdef}
     P_{0}( z) &:=1, \\
\nonumber P_{n}( z) &:=2T_{n}( z/2) \ \ \mbox{for $n >0$}
\end{align}
to ensure that $P_{0}(1)=1$ and $P_{n}( 2\cos(\theta)) =2T_{n}(\cos(\theta))=2\cos( n\theta)$ for $n >0$. We easily deduce the bivariate generating function
\begin{align}\label{eqn:Pgf}
  \sum _{n\geq 0} P_{n}( z) t^{n} =\frac{2-zt}{1-zt+t^{2}} -1=\frac{1-t^2}{1-zt+t^{2}}. 
\end{align}
To cleanse the Taylor coefficients from the signs we make the substitution $t\mapsto t/\sqrt{-1} ,\ z\mapsto \sqrt{-1} z$
to obtain\footnote{This is \emph{almost} the generating function of the Lucas numbers $L(z)=\sum_{n\geq 0}L_nz^n=\frac{2-tz}{1-tz-t^2}$.}
\begin{align}
  \frac{1-t^{2}}{1-zt+t^{2}} \mapsto \frac{1+t^{2}}{1-zt-t^{2}}.
\end{align}
We extend our definition of the pyramidal numbers to include also $\sum _{j} p_{j}^{[0]}t^j=(1+t)/(1-t)=1+2t+2t^2+2t^3+\dots$.
Let $ K\subset \mathbb{C}^{\times }$ be compact and $ r=\operatorname{min}\left\{||t|-|t|^{-1} | \ \mid\ t\in K\right\}$. If $ |z|< r$ it follows that $ |\frac{tz}{1-t^{2}} |< 1\ $ for all $ t\in K$.
Then we have for $t\in K$, $ |z|< r$ 
\begin{align*}
\sum _{m,l\geq 0} p_{l}^{[ m]} z^{m} t^{m+2l} =\sum_{m\geq 0}\frac{1+t^{2}}{\left( 1-t^{2}\right)^{m+1}}( tz)^{m} =\sum_{m\geq 0}\frac{1+t^{2}}{1-t^{2}}\left(\frac{tz}{1-t^{2}}\right)^{m} =\frac{1+t^{2}}{1-t^{2}}\frac{1}{1-\frac{tz}{1-t^{2}}}
=\frac{1+t^{2}}{1-t^{2} -tz}.
\end{align*}
This holds everywhere outside of the poles, and obviously also at  $t=0$.
But now $\sum _{n} P_{n}( z) t^{n}=\frac{2-t^2}{1-zt+t^{2}}=\sum _{j} p_{j}^{[ i]} t^{2j}(-z)^i $. Similarly, we define
\begin{align}
V_{n}( z) &:=U_{n}( z/2) \ \ \mbox{for $n \geq 0$},
\end{align}
so that $V_{n}( 2\cos(\theta)) =U_{n}(\cos(\theta))=\sin( (n+1)\theta)/\sin(\theta)$ for $n\geq 0$. Hence the bivariate generating function is
\begin{align*}
  \sum _{n\geq 0} V_{n}( z) t^{n} =\frac{1}{1-zt+t^{2}}. 
\end{align*}
To get rid of the signs we make the substitution $t\mapsto t/\sqrt{-1} ,\ z\mapsto \sqrt{-1} z$
to obtain
$\frac{1}{1-zt+t^{2}} \mapsto \frac{1}{1-zt-t^{2}}$.
The relation
\begin{align*}
\sum _{m,l\geq 0}\binom{m+l}{l} z^{m} t^{m+2l} =\sum_{m\geq 0}\frac{( tz)^{m} }{\left( 1-t^{2}\right)^{m+1}} =\sum_{m\geq 0}\frac{1}{1-t^{2}}\left(\frac{tz}{1-t^{2}}\right)^{m} =\frac{1}{1-t^{2}}\frac{1}{1-\frac{tz}{1-t^{2}}} =\frac{1}{1-t^{2} -tz},
\end{align*}
holds for $t\in K$ and $|z|< r$, hence everywhere outside the poles.
\end{proof}

For practical matters, let us mention that for $i\geq 0$ the coefficients of $\sum _{j} p_{j}^{[ i]}t^j$
are the difference pattern of the coefficients of $\sum _{j} p_{j}^{[ i+1]}t^j$. This is because the generating functions differ by a factor $1-t$. We introduce 
 the infinite matrix $P=(P_{mn})_{n,m\geq 0}$ so that $P_n(x)=\sum_{m\geq 0}P_{mn}x^m$ (cf. Eqn. \eqref{eqn:Pgf}). 
\section{Mnemonics}
\label{sec:Mn}
If you are stranded on a lonesome island and want to recall quickly the Chebyshev polynomials $T_n(x)$ without entering some tedious recursions we have a suggestion. Start with the row of odd numbers and write on top of it its difference pattern. Then put on the bottom the row of the sequence whose differences are the odd numbers, (i.e., the square numbers). Proceed the same way with the resulting row and continue recursively. Introduce alternating signs for the diagonals and fill the gaps with zeros. The result is $ P$.
\begin{align*}
\begin{array}{ c c c c c c c c c c }
1 &  & 2 &  & 2 &  & 2 &  & 2&\cdots\\
 & 1 &  & 3 &  & 5 &  & 7 && \\
 &  & 1 &  &  4&  & 9 &  & 16&\cdots\\
\end{array}
\mapsto \begin{array}{ c c c c c c c c c c}
1 &  & 2 &  & 2 &  & 2 &  & 2&\cdots\\
 & 1 &  & 3 &  & 5 &  & 7 & &\\
 &  & 1 &  &  4&  & 9 &  & 16&\\
 &  &  & 1 &  & 5 &  & 14 & &\\
 &  &  &  & 1 &  & 6 &  & 20&\\
 &  &  &  &  & 1 &  & 7 & &\\
 &  &  &  &  &  & 1 &  &  8&\\
 &  &  &  &  &  &  & 1 & &\\
 &  &  &  &  &  &  &  & 1&\\
\cdots&&  &  & &  &  &  & &\cdots
\end{array} \\\mapsto P=\left[\begin{array}{ c c c c c c c c c c}
1 & 0 & -2 & 0 & 2 & 0 & -2 & 0 & 2&\cdots\\
0 & 1 & 0 & -3 & 0 & 5 & 0 & -7 & 0&\\
0 & 0 & 1 & 0 & -4 & 0 & 9 & 0 & -16&\\
0 & 0 & 0 & 1 & 0 & -5 & 0 & 14 & 0&\\
0 & 0 & 0 & 0 & 1 & 0 & -6 & 0 & 20&\\
0 & 0 & 0 & 0 & 0 & 1 & 0 & -7 & 0&\\
0 & 0 & 0 & 0 & 0 & 0 & 1 & 0 & -8&\\
0 & 0 & 0 & 0 & 0 & 0 & 0 & 1 & 0&\\
0 & 0 & 0 & 0 & 0 & 0 & 0 & 0 & 1&\\
\cdots&  &  &  &  &  &  & & &\cdots\\
\end{array}\right]
\end{align*}
To obtain $T$ turn all $2$s in the $0$th row into $1$s and for each $i\geq 1$ multiply the $i$th row with $2^{i}$.

There is another way to look at the coefficients of the Chebyshev polynomials that we would like to mention. We plan to discuss it in a more general context at another occasion. Namely, put
$$\begin{Bmatrix}
n\\
k
\end{Bmatrix} :=\begin{cases}
T_{00} /2 & \\
T_{2k\ n} & \text{if} \ n >0\ \text{is even},\\
T_{2k+1\ n} & \text{if} \ n\ \text{odd}.
\end{cases}$$
Then $\begin{Bmatrix}
n\\
k
\end{Bmatrix}$ satisfies the recurrence $\begin{Bmatrix}
n\\
k
\end{Bmatrix} =a\begin{Bmatrix}
n-1\\
k
\end{Bmatrix} +b\begin{Bmatrix}
n\\
k-1
\end{Bmatrix}$ with $a=2,\ b=-1$ and initial condition
$$\begin{Bmatrix}
n\\
0
\end{Bmatrix} =2^{n-1} , \ \begin{Bmatrix}
0\\
k
\end{Bmatrix} =\begin{cases}
1/2 & \text{if} \ k=0,\\
( -1)^{k} & \text{else}.
\end{cases} $$

\section{Catalan triangles and the Fuß-Catalan numbers}
\label{sec:cat}

The \emph{Catalan sequence} $(C_n)_{n\geq 0}$ can be defined by the Segner recursion
\begin{align*}
C_0&=1,\\
C_n&=\sum_{k=0}^{n-1}C_kC_{n-1-k}.
\end{align*}
In term of the generating function $C(x)=\sum_{n\geq 0}C_nx^n$ this can be written as $xC=C^2-1$. On other words, if you have forgotten $C_{n}$ but know $C_{0},C_1,\dots,C_{n-1}$, let your computer expand $(\sum_{k=0}^{n-1}C_kx^k)^2$. The coefficient in front of $x^{n-1}$ is $C_n$. By complementing the squares
one can determine the two roots of $C^2-xC-1$. This leads to the formula 
\begin{align}
    C(x)=\frac{1-\sqrt{1-4x}}{2x},
\end{align}
noting that only one root is a power series. From this one deduces $C_n={1\over n+1}{2n\choose n}={2n\choose n}-{2n-1\choose n}$. The literature on the Catalan sequence is extensive (see, e.g., \cite{Stanley,StanleyCat,concrete, Koshy}). A closely related function is the generating function of the \emph{central 
binomial coefficients}
\begin{align}
    B(x)=\sum_{n\geq 0} \binom{2n}{n} x^n=\frac{C(x)}{2-C(x)}=\frac{1}{\sqrt{1-4x}}.
\end{align}

The \emph{Fuß-Catalan numbers} are defined by
\begin{align}
F_{m}( p,r) :=\frac{1}{mp+r}\binom{mp+r}{m} =\frac{r}{m( p-1) +r}\binom{mp+r-1}{m} =\frac{r}{m}\binom{mp+r-1}{m-1}
\end{align}
for integers $m,p,r\geq 0$.
They satisfy the important convolution property $F_{m}( p,s+r) =\sum _{k=0}^{m} F_{k}( p,r) F_{m-k}( p,s)$; see \cite[Eqn. (5.36)]{concrete} or \cite{Riordan}. From this it follows that
\begin{align*}
    \sum_{m\geq 0}F_{m}( p,r)x^m=\left(\sum_{m\geq 0}F_{m}( p,1)x^m\right)^r,
\end{align*}
i.e., $\sum_{m\geq 0}F_{m}( p,r)x^m$ is the $r$-fold convolution of $\sum_{m\geq 0}F_{m}( p,1)x^m$. By inspection, $\sum_{m\geq 0}F_{m}(1,1)x^m=(1-x)^{-1}$ and  $\sum_{m\geq 0}F_{m}(2,1)x^m=C(x)$. The series $\sum_{m\geq 0}F_{m}(p,1)x^m$ 
is also called the \emph{generalized binomial series}.
Let us record the first $r$-fold convolutions of $C(x)$:
 \begin{align*}
C( x)  &=1+\ \ x+\ 2x^{2} +\ \ 5x^{3} + \ \ 14x^{4} + \ \ \ 42x^{5} +\ 132x^{6} + \ \ 429x^{7} +\dotsc ,\\
C^{2}( x) &=1+2x+\ 5x^{2} +\ 14x^{3} +\ \ 42x^{4} +\ \ 132x^{5} +\ 429x^{6} +\ 1430x^{7} +\dotsc ,\\
C^{3}( x) &=1+3x+9x^{2} +\ \ 28x^{3} +\ \ 90x^{4} +\ \ 297x^{5} +1001x^{6} +\ 3432x^{7} +\dotsc ,\\
C^{4}( x) &=1+4x+14x^{2} +\ 48x^{3} +\ 165x^{4} +\  572x^{5} +2002x^{6} +\ 7072x^{7} +\dotsc ,\\
C^{5}( x) &=1+5x+20x^{2} +\ 75x^{3} +\ 275x^{4} +1001x^{5} +3640x^{6} +13260x^{7} +\dotsc ,\\
C^{6}( x) &=1+6x+27x^{2} +110x^{3} +429x^{4} +1638x^{5} +6188x^{6} +23256x^{7} +\dotsc ,\\
C^{7}( x) &=1+7x+35x^{2} +154x^{3} +637x^{4} +2548x^{5} +9996x^{6} +38760x^{7} +\dotsc ,\\
C^{8}( x) &=1 + 8 x + 44 x^2 + 208 x^3 + 910 x^4 + 3808 x^5 + 15504 x^6 + 62016 x^7 + \dotsc 
\dotsc &
\end{align*}

The coefficients of the $2l$-fold convolutions $ C^{2l}( x)$ for $ l\geq 1$ have been arranged by Louis Shapiro \cite{Shapiro} into an infinite lower triangular matrix $ B^{\operatorname{even}}$ that is usually referred to as the Catalan triangle. We will call it here the \textit{even Catalan triangle} since we also need the \emph{odd Catalan triangle }$ B^{\operatorname{odd}}$. The latter is formed by arranging the $ ( 2l-1)$-fold convolutions $ C^{2l-1}( x)$ for $ l\geq 1$ into an infinite lower triangular matrix.
\begin{align}
B^{\operatorname{even}} =\begin{bmatrix}
1 &  &  &  &  & &\cdots \\
2 & 1 &  &  &  & &\\
5 & 4 & 1 &  &  & &\\
14 & 14 & 6 & 1 &  & &\\
42 & 48 & 27 & 8 & 1 & &\\
132 & 165 & 110 & 44 & 10 & 1&\\
\cdots  &  &  &  &  & & \cdots 
\end{bmatrix} ,\ \ \ \ \ B^{\operatorname{odd}} =\begin{bmatrix}
1 &  &  &  &  &  & \cdots \\
1 & 1 &  &  &  &  & \\
2 & 3 & 1 &  &  &  & \\
5 & 9 & 5 & 1 &  &  & \\
14 & 28 & 20 & 7 & 1 &  & \\
42 & 90 & 75 & 35 & 9 & 1 & \\
\cdots  &  &  &  &  &  & \cdots 
\end{bmatrix} .
\end{align}
The matrix entries are given by the formulas 
\begin{align}\label{eq:CatalanMatrices}
B_{ij}^{\operatorname{even}} =\frac{j}{i}\binom{2i}{i-j} ,\ \ B_{ij}^{\operatorname{odd}} =\frac{2j+1}{2i+1}\binom{2i+1}{i-j},
\end{align}
where $i,j\geq 1$.
One of the authors met $B_{ij}^{\operatorname{even}}$ already on another occasion \cite{Chi}. We encounter the Catalan triangles in the next section.

\section{Power reduction formulas}
\label{sec:pr}

\begin{theorem} \label{thm:powerred}
\begin{enumerate}
    \item $2^{2n-1}\cos^{2n}( \theta) =\frac{1}{2}\binom{2n}{n} +\sum _{k=1}^n\binom{2n}{n-k}\cos( 2k\theta)$,
    \item $2^{2n}\cos^{2n+1}( \theta) =\sum _{k=0}^{n}\binom{2n+1}{n-k}\cos(( 2k+1) \theta)$,
    \item $2^{2n-1}\sin^{2n}( \theta) =\frac{1}{2}\binom{2n}{n} +\sum _{k=1}^n( -1)^{k}\binom{2n}{n-k}\cos( 2k\theta)$,
    \item $2^{2n}\sin^{2n+1}( \theta) =\sum _{k=0}^{n}( -1)^{k}\binom{2n+1}{n-k}\sin(( 2k+1) \theta)$.
\end{enumerate}
\end{theorem}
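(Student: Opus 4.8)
The plan is to expand everything via Euler's formula and the binomial theorem, reindex, and collapse complex conjugate pairs. I would start from $2\cos\theta = e^{i\theta}+e^{-i\theta}$ and $2i\sin\theta = e^{i\theta}-e^{-i\theta}$ and raise each to the power $N$, where $N=2n$ in parts (1), (3) and $N=2n+1$ in parts (2), (4). The binomial theorem gives
\[
(e^{i\theta}\pm e^{-i\theta})^{N}=\sum_{j=0}^{N}(\pm 1)^{j}\binom{N}{j}\,e^{i(N-2j)\theta}.
\]
The substitution $j=\lfloor N/2\rfloor-k$ converts the exponent $N-2j$ into $2k$ when $N$ is even and into $2k+1$ when $N$ is odd, which is precisely the frequency pattern appearing on the right-hand sides of (1)--(4).

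Next I would fold the sum onto itself using the symmetry $\binom{N}{j}=\binom{N}{N-j}$. Under the substitution above this reads $\binom{N}{n-k}=\binom{N}{n+k}$ for $N=2n$ (pairing $k$ with $-k$) and $\binom{N}{n-k}=\binom{N}{n+k+1}$ for $N=2n+1$ (pairing $k$ with $-k-1$). Each such pair contributes $e^{i\alpha\theta}+e^{-i\alpha\theta}=2\cos(\alpha\theta)$ in the cosine cases, while in the sine cases one must first observe $(-1)^{j}=(-1)^{n-k}=(-1)^{n}(-1)^{k}$, after which the pair contributes $(-1)^{n}(-1)^{k}\cdot 2i\sin(\alpha\theta)$; in the even cosine case (1) there is in addition the leftover central term $k=0$ giving $\binom{2n}{n}$.

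Finally I would divide by the appropriate constant and clear the imaginary units: $2^{N}$ on the left absorbs one factor of $2$ from the folding (yielding the $2^{2n-1}$ or $2^{2n}$ normalisation), and in the sine cases $(2i)^{2n}=(-1)^{n}2^{2n}$ and $(2i)^{2n+1}=(-1)^{n}2^{2n+1}i$ cancel the global sign $(-1)^{n}$ and the overall $i$, leaving only the alternating $(-1)^{k}$ in front of $\binom{2n}{n-k}$ or $\binom{2n+1}{n-k}$. This reproduces (3) and (4), and dropping the signs gives (1) and (2). I do not expect a genuine obstacle; the only delicate point is the bookkeeping of the reindexing ranges — verifying that $k$ runs over exactly $\{1,\dots,n\}$ (cosine, even) or $\{0,\dots,n\}$ (the remaining three) — and tracking the powers of $i$ in the odd sine case, where an off-by-one in the pairing $k\leftrightarrow -k-1$ is the easiest slip to make.
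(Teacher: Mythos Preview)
Your proposal is correct and uses the same core ingredients as the paper --- Euler's formula, the binomial theorem, and the symmetry $\binom{N}{j}=\binom{N}{N-j}$ to fold conjugate pairs --- so the underlying computation is essentially identical. The only organisational difference is that you run the argument from left to right and treat all four cases uniformly by expanding $(e^{i\theta}\pm e^{-i\theta})^{N}$, whereas the paper runs (1) and (2) from right to left (starting with $\sum_k\binom{N}{n-k}\cos(\cdot)$ and collapsing it to $(e^{i\theta}+e^{-i\theta})^{N}$) and then obtains (3) and (4) for free by the substitution $\theta\mapsto\theta-\pi/2$, which turns $\cos$ into $\sin$ and produces the alternating signs $(-1)^{k}$ from $\cos(2k(\theta-\pi/2))$ and $\cos((2k+1)(\theta-\pi/2))$. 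Your direct handling of the sine cases via $(2i)^{N}$ is perfectly fine and arguably more systematic; the paper's shift trick just spares one the bookkeeping of the $i$-powers and the $k\leftrightarrow -k-1$ pairing that you (rightly) flag as the easiest place to slip.
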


\begin{proof} The following proof of (1) we learnt from Daniel Herden.
\begin{align*}
    \sum _{k=1}^n\binom{2n}{n-k}\cos( 2k\theta)&=\sum _{k=1}^n\binom{2n}{n-k}\frac{e^{2k\sqrt{-1}\theta} +e^{-2k\sqrt{-1}\theta}}{2}=e^{-2n\sqrt{-1}\theta}\sum _{k=1}^n\binom{2n}{n-k}\frac{e^{2( n+k) \sqrt{-1}\theta} +e^{2( n-k) \sqrt{-1}\theta}}{2}\\
&=\frac{e^{-2n\sqrt{-1}\theta}}{2}\left(\sum _{k=1}^n\binom{2n}{n-k} e^{2( n-k) \sqrt{-1}\theta} +\sum _{k=1}^n\binom{2n}{n-k} e^{2( n+k) \sqrt{-1}\theta}\right)\\
&=\frac{e^{-2n\sqrt{-1}\theta}}{2}\left(\sum _{k=1}^n\binom{2n}{n-k} e^{2( n-k) \sqrt{-1}\theta} +\sum _{k=1}^n\binom{2n}{n+k} e^{2( n+k) \sqrt{-1}\theta}\right)\\
&=\frac{e^{-2n\sqrt{-1}\theta}}{2}\left(-\binom{2n}{n} e^{2n\sqrt{-1}\theta}+\sum _{j}\binom{2n}{j} e^{2j\sqrt{-1}\theta} \right)=\frac{e^{-2n\sqrt{-1}\theta}}{2}\left( 1+e^{2\sqrt{-1}\theta}\right)^{2n} -\frac{1}{2}\binom{2n}{n} \\
&=\frac{1}{2}\left( e^{\sqrt{-1}\theta} +e^{-\sqrt{-1}\theta}\right)^{2n} -\frac{1}{2}\binom{2n}{n}
=2^{2n-1}\cos^{2n}( \theta) -\frac{1}{2}\binom{2n}{n}.
\end{align*}
By shifting the argument we deduce (3):
\begin{align*}
    2^{2n-1}\sin^{2n}( \theta) &=2^{2n-1}\cos^{2n}\left( \theta-\frac{\pi }{2}\right) =\frac{1}{2}\binom{2n}{n} +\sum _{k=1}^n\binom{2n}{n-k}\cos\left( 2k\left( \theta-\frac{\pi }{2}\right)\right) \\
    &=\frac{1}{2}\binom{2n}{n} +\sum _{k=1}^n( -1)^{k}\binom{2n}{n-k}\cos( 2k\theta).
\end{align*}
To show (2) along the lines of Herden's proof we calculate
\begin{align*}
  \sum _{k=0}^{n}\binom{2n+1}{n-k}\cos(( 2k+1) \theta) &=\sum _{k=0}^{n}\binom{2n+1}{n-k}\frac{e^{( 2k+1) \sqrt{-1}\theta} +e^{-( 2k+1) \sqrt{-1}\theta}}{2}\\
   &=\frac{e^{-(2n+1)\sqrt{-1}\theta}}{2}\left(\sum _{k=0}^{n}\binom{2n+1}{n-k} e^{( 2( n+k) +2) \sqrt{-1}\theta} +\sum _{k=0}^{n}\binom{2n+1}{n-k} e^{2( n-k) \sqrt{-1}\theta}\right)\\
&=\frac{e^{-(2n+1)\sqrt{-1}\theta}}{2}\left(\sum _{j=n+1}^{2n+1}\binom{2n+1}{j} e^{2j\sqrt{-1}\theta} +\sum _{j=0}^{n}\binom{2n+1}{j} e^{2j\sqrt{-1}\theta}\right)\\
&=\frac{e^{-(2n+1)\sqrt{-1}\theta}}{2}\sum _{j=0}^{2n+1}\binom{2n+1}{j} e^{2j\sqrt{-1}\theta} =\frac{e^{( -2n-1) \sqrt{-1}\theta}}{2}\left( 1+e^{2\sqrt{-1}\theta}\right)^{2n+1}\\
&=\frac{1}{2}\left( e^{\sqrt{-1}\theta} +e^{-\sqrt{-1}\theta}\right)^{2n+1} =2^{2n}\cos^{2n+1}( \theta).
\end{align*}
By shifting the argument we deduce (4):
\begin{align*}
2^{2n}\sin^{2n+1}( \theta) &=2^{2n}\cos^{2n+1}\left( \theta-\frac{\pi }{2}\right) =\sum _{k=0}^{n}\binom{2n+1}{n-k}\cos\left(( 2k+1)\left( \theta-\frac{\pi }{2}\right)\right)\\
&=\sum _{k=0}^{n}\binom{2n+1}{n-k}\cos\left(( 2k+1) \theta-k\pi -\frac{\pi }{2}\right) =\sum _{k=0}^{n}( -1)^{k}\binom{2n+1}{n-k}\sin(( 2k+1) \theta).
\end{align*}
\end{proof}
The following banal calculation has a pretty matrix inversion as a consequence (see Eqn. \eqref{eq:PyrCat1}––\eqref{eq:PyrCat4}). 
\begin{corollary}\label{cor:Cat}
\begin{enumerate}
\item $ 2^{2n}\cos^{2n}( \theta) =\sum _{k=0}^{n}B_{nk}^{\operatorname{odd}} \ \frac{\sin(( 2k+1) \theta)}{\sin( \theta)}$,
    \item $2^{2n-1} \cos^{2n-1}( \theta ) =\sum _{k=1}^{n} B_{nk}^{\operatorname{even}}\ \frac{\sin( 2k\theta )}{\sin( \theta)}$.
\end{enumerate}
\end{corollary}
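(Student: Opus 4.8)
The plan is to derive both identities in Corollary \ref{cor:Cat} from the power reduction formulas of Theorem \ref{thm:powerred} by re-expanding the right-hand sides in the basis $\{\sin((k+1)\theta)/\sin\theta\}=\{U_k(\cos\theta)\}$ rather than in $\{\cos(m\theta)\}$. For part (1), I would start from $2^{2n}\cos^{2n}(\theta)=2\cdot 2^{2n-1}\cos^{2n}(\theta)=\binom{2n}{n}+2\sum_{k=1}^{n}\binom{2n}{n-k}\cos(2k\theta)$. The key observation is the elementary trigonometric identity $2\cos(2k\theta)\sin(\theta)=\sin((2k+1)\theta)-\sin((2k-1)\theta)$, which after division by $\sin(\theta)$ turns a cosine of an even multiple into a difference of two of the odd basis vectors $\sin((2j+1)\theta)/\sin(\theta)$. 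Summing by parts (Abel summation) over $k$ collapses the telescoping structure and rewrites $2^{2n}\cos^{2n}(\theta)$ as $\sum_{k=0}^{n}c_{nk}\,\sin((2k+1)\theta)/\sin(\theta)$ for explicit coefficients $c_{nk}$ assembled from consecutive binomials $\binom{2n}{n-k}$; the remaining task is the purely algebraic check that $c_{nk}=B^{\mathrm{odd}}_{nk}=\frac{2k+1}{2n+1}\binom{2n+1}{n-k}$, using the Pascal-type identity $\binom{2n}{n-k}-\binom{2n}{n-k-1}=\frac{2k+1}{2n+1}\binom{2n+1}{n-k}$.

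For part (2), I would run the same machinery starting from part (2) of Theorem \ref{thm:powerred}, written as $2^{2n-1}\cos^{2n-1}(\theta)=\sum_{k=0}^{n-1}\binom{2n-1}{n-1-k}\cos((2k+1)\theta)$ (replacing $n$ by $n-1$ there). Now the relevant elementary identity is $2\cos((2k+1)\theta)\sin(\theta)=\sin((2k+2)\theta)-\sin(2k\theta)$, so division by $\sin(\theta)$ and Abel summation again produce a telescoping sum, this time in the even basis vectors $\sin(2j\theta)/\sin(\theta)$. Matching coefficients reduces to checking $\binom{2n-1}{n-1-k}-\binom{2n-1}{n-2-k}$ equals (up to the needed normalization) $B^{\mathrm{even}}_{nk}=\frac{k}{n}\binom{2n}{n-k}$, again via a one-line Pascal computation. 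Alternatively, since the generating-function identities $\sum_j B^{\mathrm{odd}}_{ij}x^j$ correspond to odd powers of $C(x)$ and $\sum_j B^{\mathrm{even}}_{ij}x^j$ to even powers, one could instead verify Corollary \ref{cor:Cat} by recognizing $2\cos\theta = C(x)\big|_{\text{some substitution}}$ plus a geometric series; but the telescoping approach is more elementary and stays within the paper's stated prerequisites.

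The main obstacle I expect is bookkeeping rather than conceptual depth: the Abel summation produces boundary terms (the $k=0$ and $k=n$ ends), and one must be careful that the contribution at the top collapses correctly — in part (1) the term $\binom{2n}{0}\cos((2n)\theta)$-type boundary piece has to combine with the constant $\binom{2n}{n}$ in exactly the right way to yield the $k=n$ entry $B^{\mathrm{odd}}_{nn}=1$, and similarly the $k=0$ summand must reproduce $B^{\mathrm{odd}}_{n0}=C_n$. Getting the index ranges and the half-integer normalization (the factors of $2$ and the $\tfrac12\binom{2n}{n}$) consistent is where errors creep in. Once the coefficient $c_{nk}$ is pinned down as an alternating partial sum of binomials, the identification with the closed form in Eqn. \eqref{eq:CatalanMatrices} is a routine induction on $k$ or a direct application of the hockey-stick/Pascal identities.
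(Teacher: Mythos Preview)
Your approach is correct and the bookkeeping works out exactly as you sketch: after the product-to-sum substitution, the coefficient of $\sin((2j+1)\theta)/\sin\theta$ is $\binom{2n}{n-j}-\binom{2n}{n-j-1}$, which equals $\frac{2j+1}{2n+1}\binom{2n+1}{n-j}=B^{\operatorname{odd}}_{nj}$ by the Pascal-type identity you cite; the boundary terms cause no trouble since $\binom{2n}{-1}=0$ absorbs the top end and $\nu_0=1$ absorbs the constant $\binom{2n}{n}$. Part (2) goes through the same way.

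The paper, however, takes a different and shorter route: it simply \emph{differentiates} the power reduction formulas of Theorem \ref{thm:powerred} with respect to $\theta$. Differentiating (1) turns $\cos^{2n}(\theta)$ into $-2n\sin(\theta)\cos^{2n-1}(\theta)$ on the left and $\cos(2k\theta)$ into $-2k\sin(2k\theta)$ on the right, so after dividing by $-2n\sin(\theta)$ the coefficient $\frac{k}{n}\binom{2n}{n-k}=B^{\operatorname{even}}_{nk}$ drops out for free --- no Abel summation, no auxiliary binomial identity. Your telescoping argument is more combinatorial and makes the appearance of the Catalan-triangle entries visible as successive differences of binomials (which is arguably more in the spirit of Section \ref{sec:inv}); the paper's differentiation trick is slicker but hides that structure behind the chain rule.
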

\begin{proof} We derive (1) of Theorem \ref{thm:powerred}:
\begin{align*}
    -2^{2n-1} 2n\sin( \theta)\cos^{2n-1}( \theta) &={d\over d\theta}\left( 2^{2n-1}\cos^{2n}( \theta)\right)=-\sum _{k=1}^n\binom{2n}{n-k} 2k\sin( 2k\theta)\\
   \Longrightarrow 2^{2n-1}\cos^{2n-1}( \theta)&=\sum _{k=1}^n\frac{k}{n}\binom{2n}{n-k}\frac{\sin( 2k\theta)}{\sin( \theta)},
\end{align*}
which shows (2).
Similarly, taking the derivative of (2) of Theorem \ref{thm:powerred} we obtain:
\begin{align*}
    -2^{2n}( 2n+1)\sin( \theta)\cos^{2n}( \theta)& ={d\over d\theta}\left( 2^{2n}\cos^{2n+1}( \theta)\right)=-\sum _{k=0}^{n}\binom{2n+1}{n-k}( 2k+1)\sin(( 2k+1) \theta)\\ \Longrightarrow 2^{2n}\cos^{2n}( \theta) &=\sum _{k=0}^{n}\frac{2k+1}{2n+1}\binom{2n+1}{n-k}\frac{\sin(( 2k+1) \theta)}{\sin( \theta)},
\end{align*}
proving (1).
\end{proof}

\section{Hypergeometric identities from matrix inversions}
\label{sec:inv}
In order to distill the relevant hypergeometric identities we choose to get rid of the superficial powers of $2$. We put 
\begin{align}
\kappa ( \theta) :=2\cos( \theta) ,\ \sigma ( \theta) :=2\sin( \theta) ,\ \nu _{m}( \theta) :=\frac{\sin(( m+1) \theta)}{\sin( \theta)}
\end{align}
and work with the following bases 
\begin{align*}
\begin{bmatrix}
1 & \kappa ( \theta) & \kappa ( 2\theta) & \kappa ( 3\theta) & \dotsc 
\end{bmatrix},\\
\begin{bmatrix}
1 & \nu _{1}( \theta) & \nu _{2}( \theta) & \nu _{3}( \theta) & \dotsc 
\end{bmatrix}
\end{align*}
for the even trigonometric polynomials and
\begin{align*}
\begin{bmatrix}
\sigma ( \theta) & \sigma ( 2\theta) & \sigma ( 3\theta) & \dotsc 
\end{bmatrix}
\end{align*}
for the odd trigonometric polynomials.

We are prepared to understand that $T_{2n}(\cos( \theta))=\cos(2n \theta)$ and 
Theorem \ref{thm:powerred} (1) can be interpreted as the following mutually inverse linear systems 
\begin{multline}
\label{eq:inv1}
\begin{bmatrix}
1& \kappa ( 2\theta)&\kappa ( 4\theta)&\kappa ( 6\theta)&\kappa ( 8\theta)&\cdots 
\end{bmatrix} \\
=\begin{bmatrix}
\kappa ^{0}( \theta)&
\kappa ^{2}( \theta)&
\kappa ^{4}( \theta)&
\kappa ^{6}( \theta)&
\kappa ^{8}( \theta)&
\cdots 
\end{bmatrix}\begin{bmatrix}
p_{0}^{[ 0]} &-p_{1}^{[ 0]}  & p_{2}^{[ 0]} &  -p_{3}^{[ 0]}& p_{4}^{[ 0]} & \cdots \\
 & p_{0}^{[ 2]} & -p_{1}^{[ 2]} &p_{2}^{[ 2]}   & -p_{3}^{[ 2]} & \\
 &  & p_{0}^{[ 4]} &-p_{1}^{[ 4]}  & p_{2}^{[ 4]} & \\
 & &  & p_{0}^{[ 6]} & -p_{1}^{[ 6]} & \\
 & &  &  & p_{0}^{[ 8]} & \\
\cdots  &  &  &  &  & \cdots 
\end{bmatrix},
\end{multline}
\begin{multline}
\label{eq:inv2}
\begin{bmatrix}
\kappa ^{0}( \theta)&
\kappa ^{2}( \theta)&
\kappa ^{4}( \theta)&
\kappa ^{6}( \theta)&
\kappa ^{8}( \theta)&
\cdots 
\end{bmatrix}\\=\begin{bmatrix}
1&
\kappa ( 2\theta)&
\kappa ( 4\theta)&
\kappa ( 6\theta)&
\kappa ( 8\theta)&
\cdots 
\end{bmatrix}\begin{bmatrix}
\binom{0}{0} & \binom{2}{1} & \binom{4}{2} & \binom{6}{3} & \binom{8}{4} & \cdots \\
 & \binom{2}{0} & \binom{4}{1} & \binom{6}{2} &\binom{8}{3}  & \\
 &  & \binom{4}{0} & \binom{6}{1} & \binom{8}{2} & \\
 &  &  & \binom{6}{0} &\binom{8}{1}  & \\
 &  &  &  & \binom{8}{0} & \\
\cdots  &  &  &  &  & \cdots 
\end{bmatrix}.
\end{multline}
The equivalence of the relation $T_{2n}(\sin( \theta))=(-1)^n\cos(2n\theta)$ and 
Theorem \ref{thm:powerred} (3) boils down to essentially the same matrix inversion.

Similary, $T_{2n+1}(\cos( \theta))=\cos((2n+1)\theta)$ can be compared with
Theorem \ref{thm:powerred} (2):
\begin{multline}
\label{eq:inv3}
\begin{bmatrix}
\kappa ( \theta)& \kappa ( 3\theta)&\kappa ( 5\theta)&\kappa ( 7\theta)&\kappa ( 9\theta)&\cdots 
\end{bmatrix} \\
=\begin{bmatrix}
\kappa ^{1}( \theta)&
\kappa ^{3}( \theta)&
\kappa ^{5}( \theta)&
\kappa ^{7}( \theta)&
\kappa ^{9}( \theta)&
\cdots 
\end{bmatrix}\begin{bmatrix}
p_{0}^{[ 1]} &-p_{1}^{[ 1]}  & p_{2}^{[ 1]} &  -p_{3}^{[ 1]}& p_{4}^{[ 1]} & \cdots \\
 & p_{0}^{[ 3]} & -p_{1}^{[ 3]} &p_{2}^{[ 3]}   & -p_{3}^{[ 3]} & \\
 &  & p_{0}^{[ 5]} &-p_{1}^{[ 5]}  & p_{2}^{[ 5]} & \\
 & &  & p_{0}^{[ 7]} & -p_{1}^{[ 7]} & \\
 & &  &  & p_{0}^{[ 9]} & \\
\cdots  &  &  &  &  & \cdots 
\end{bmatrix},
\end{multline}
\begin{multline}
\label{eq:inv4}
\begin{bmatrix}
\kappa ^{1}( \theta)&
\kappa ^{3}( \theta)&
\kappa ^{5}( \theta)&
\kappa ^{7}( \theta)&
\kappa ^{9}( \theta)&
\cdots 
\end{bmatrix}\\=\begin{bmatrix}
\kappa ( \theta)& \kappa ( 3\theta)&\kappa ( 5\theta)&\kappa ( 7\theta)&\kappa ( 9\theta)&\cdots 
\end{bmatrix}\begin{bmatrix}
\binom{1}{0} & \binom{3}{1} & \binom{5}{2} & \binom{7}{3} & \binom{9}{4} & \cdots \\
 & \binom{3}{0} & \binom{5}{1} & \binom{7}{2} &\binom{9}{3}  & \\
 &  & \binom{5}{0} & \binom{7}{1} & \binom{9}{2} & \\
 &  &  & \binom{7}{0} &\binom{9}{1}  & \\
 &  &  &  & \binom{9}{0} & \\
\cdots  &  &  &  &  & \cdots 
\end{bmatrix}.
\end{multline}
Essentially the same matrix inversion shows up when comparing $T_{2n+1}(\sin( \theta))=(-1)^n\sin((2n+1)\theta)$ and 
Theorem \ref{thm:powerred} (4).

Finally, $U_{2n}(\cos( \theta))=\nu_{2n}(\theta)$ is the inverse linear system to Corollary \ref{cor:Cat} (1) 
\begin{multline}\label{eq:PyrCat1}
    \begin{bmatrix}
1&
\kappa ^{2}( \theta)&
\kappa ^{4}( \theta)&
\kappa ^{6}( \theta)&
\kappa ^{8}( \theta)&
\cdots 
\end{bmatrix}\\=\begin{bmatrix}
1& \nu_2(\theta)&\nu_4(\theta)&\nu_6(\theta)&\nu_8(\theta)&\cdots 
\end{bmatrix}\begin{bmatrix}
1 &1 & 2 & 5 & 14& \cdots \\
 & 1 & 3 & 9 &28  & \\
 &  & 1 & 5 & 20& \\
 &  &  & 1&7  & \\
 &  &  &  & 1 & \\
\cdots  &  &  &  &  & \cdots 
\end{bmatrix},
\end{multline}
\begin{multline}\label{eq:PyrCat2}
 \begin{bmatrix}
1& \nu_2(\theta)&\nu_4(\theta)&\nu_6(\theta)&\nu_8(\theta)&\cdots 
\end{bmatrix}\\=\begin{bmatrix}
1&
\kappa ^{2}( \theta)&
\kappa ^{4}( \theta)&
\kappa ^{6}( \theta)&
\kappa ^{8}( \theta)&
\cdots 
\end{bmatrix}\begin{bmatrix}
\binom{0}{0} &- \binom{1}{1} & \binom{2}{2} & -\binom{3}{3} & \binom{4}{4} & \cdots \\
 & \binom{2}{0} & -\binom{3}{1} & \binom{4}{2} &-\binom{5}{3}  & \\
 &  & \binom{4}{0} & -\binom{5}{1} & \binom{6}{2} & \\
 &  &  & \binom{6}{0} &-\binom{7}{1}  & \\
 &  &  &  & \binom{8}{0} & \\
\cdots  &  &  &  &  & \cdots 
\end{bmatrix},
\end{multline}
and $U_{2n+1}(\cos( \theta))=\nu_{2n+1}(\theta)$ is the inverse linear system to Corollary \ref{cor:Cat} (2) 
\begin{multline}\label{eq:PyrCat3}
    \begin{bmatrix}
\kappa &
\kappa ^{3}( \theta)&
\kappa ^{5}( \theta)&
\kappa ^{7}( \theta)&
\kappa ^{9}( \theta)&
\cdots 
\end{bmatrix}\\=\begin{bmatrix}
 \nu_1& \nu_3(\theta)&\nu_5(\theta)&\nu_7(\theta)&\nu_9(\theta)&\cdots 
\end{bmatrix}\begin{bmatrix}
1 & 2 & 5 & 14& 42&\cdots \\
 & 1 & 4 & 14 &48  & \\
 &  & 1 & 6 & 27& \\
 &  &  & 1&8  & \\
 &  &  &  & 1 & \\
\cdots  &  &  &  &  & \cdots 
\end{bmatrix},
\end{multline}
\begin{multline}\label{eq:PyrCat4}
 \begin{bmatrix}
\nu_1(\theta)& \nu_3(\theta)&\nu_5(\theta)&\nu_7(\theta)&\nu_9(\theta)&\cdots 
\end{bmatrix}\\=\begin{bmatrix}
\kappa ( \theta)&
\kappa ^{3}( \theta)&
\kappa ^{5}( \theta)&
\kappa ^{7}( \theta)&
\kappa ^{9}( \theta)&
\cdots 
\end{bmatrix}
\begin{bmatrix}
\binom{1}{0} &- \binom{2}{1} & \binom{3}{2} & -\binom{4}{3} & \binom{5}{4} & \cdots \\
 & \binom{3}{0} & -\binom{4}{1} & \binom{5}{2} &-\binom{6}{3}  & \\
 &  & \binom{5}{0} & -\binom{6}{1} & \binom{7}{2} & \\
 &  &  & \binom{7}{0} &-\binom{8}{1}  & \\
 &  &  &  & \binom{9}{0} & \\
\cdots  &  &  &  &  & \cdots 
\end{bmatrix}.
\end{multline}
The transposed matrices of Eqn. \eqref{eq:PyrCat2} and \eqref{eq:PyrCat4} are the odd and the even Catalan triangles $B^{\operatorname{odd}}$ and $B^{\operatorname{even}}$.
The latter two matrix inversions were already observed by Louis Shapiro in \cite{ShapiroOpen}, and we gave here a trigonometric proof.  In fact, all the matrices of this section are transposes of Riordan arrays. This can be used to explain all the inversions of this section; we do this in the next Section.

\section{Interpretation in terms of the Riordan group}
\label{sec:riordan}

The Chebyshev matrix $ T$ itself cannot be interpreted as the transpose of a Riordan array, but the matrizes $ P,B^{\operatorname{even}}$ and $ B^{\operatorname{odd}}$ can. This means that the matrix inversions of the previous section can be understood as inversions in the so-called Riordan group. The calculations are based on Lagrange inversion and turn out to be surprisingly simple. Some of the statements of this section can be found in some form in the PhD thesis of Aoife Hennessy \cite{Aoife}.

Let us review some basic ideas of the theory of Riordan arrays. The proofs of the facts reviewed here are actually quite accessible; we refer the reader to the monograph \cite{ShapRiordan}. We write $ \operatorname{\mathbb{R}} \llbracket x\rrbracket $ for the formal power series with real coefficients. By $ \operatorname{\mathbb{R}} \llbracket x\rrbracket ^{\times }$ we mean the multiplicatively invertible formal power series.

The composition map $ \operatorname{\operatorname{\mathbb{R}}}\llbracket x\rrbracket \times x\operatorname{\operatorname{\mathbb{R}}} \llbracket x\rrbracket \rightarrow \operatorname{\operatorname{\mathbb{R}}} \llbracket x\rrbracket $ \ that sends $ g( x) =\sum _{n\geq 0} g_{n} x^{n} \in \operatorname{\operatorname{\mathbb{R}}} \llbracket x\rrbracket $ and $ f( x) =\sum _{m\geq 1} f_{m} x^{m} \in x\operatorname{\mathbb{R}} \llbracket x\rrbracket $ to $ \sum _{n\geq 0} g_{n}\left(\sum _{m\geq 1} f_{m} x^{m}\right)^{n}$ is denoted by $ ( g,f) \mapsto g\circ f$. The space $ x\operatorname{\mathbb{R}} \llbracket x\rrbracket ^{\times }$ is comprised of compositionally invertible formal power series. I.e., if $ f\in x\operatorname{\mathbb{R}} \llbracket x\rrbracket ^{\times }$ then the map $ \operatorname{\operatorname{\mathbb{R}}} \llbracket x\rrbracket \rightarrow \operatorname{\operatorname{\mathbb{R}}} \llbracket x\rrbracket ,\ g\mapsto g\circ f$ is invertible. The inversion is provided by the Langrange inversion formula:  if $ f( x) =\frac{x}{\phi ( x)}$ then the coefficients of the \textit{compositional inverse }$ \overline{f}$ are given by the formula (see, e.g., \cite{Gessel})
\begin{align}
\label{eq:Lag}
\left[ y^{n}\right]\overline{f}\left( y^{n}\right) =\frac{1}{n}\left[ x^{n-1}\right]( \phi ( x))^{n}.
\end{align}

\begin{definition}

An infinite lower triangular matrix $ A=[ a_{nk}]_{n,k\geq 0} \in \operatorname{\mathbb{R}}^{\infty \times \infty }$ is called a \textit{Riordan array}
if there exist $ ( g,f) \in \operatorname{\mathbb{R}} \llbracket x\rrbracket \times x\operatorname{\mathbb{R}} \llbracket x\rrbracket^\times $ such that $ a_{nk} =\left[ x^{n}\right] g( x) f( x)^{k}$.
It is tradition to write, abusing notation, $ A=( g,f)$, so that $ ( g,f)$ can be interpreted as a matrix or as a tuple. If $ g\in \mathbb{R} \llbracket x\rrbracket ^{\times }$ then $ ( g,f)$ \ is called a \textit{proper} Riordan array.
\end{definition}

It turns out that $ \operatorname{\mathbb{R}} \llbracket x\rrbracket ^{\times } \times x\operatorname{\mathbb{R}} \llbracket x\rrbracket ^{\times }$ with the composition $ ( g_{1} ,f_{1}) *( g_{2} ,f_{2}) =( g_{1}( g_{2} \circ f_{1}) ,f_{2} \circ f_{1})$ forms a group, the so-called \textit{Riordan group}, with identity $(1,x)$ and inverse $ ( g,f)^{-1} =\left(\frac{1}{g\circ \overline{f}} ,\overline{f}\right)$.

The \textit{Fundamental Theorem of Riordan Arrays }links the Riordan group to matrix multiplication. Let us denote by $ \vec{h} =\begin{bmatrix}
h_{0} & h_{1} & h_{2} & \dotsc 
\end{bmatrix}^{T}$ the infinite column vector associated to the formal power series $ h( x) =\sum _{n\geq 0} h_{n} x^{n} \in \operatorname{\mathbb{R}} \llbracket x\rrbracket $. Then the theorem says that $ ( g,f)\vec{h} =\overrightarrow{g( h\circ f)}$. As a consequence one deduces the relation $ ( g_{1} ,f_{1})( g_{2} ,f_{2}) =\begin{bmatrix}
( g_{1} ,f_{1})\overrightarrow{g_{2}} & ( g_{1} ,f_{1})\overrightarrow{g_{2} f_{2}} & ( g_{1} ,f_{1})\overrightarrow{g_{2} f_{2}^{2}} & \dotsc 
\end{bmatrix} =( g_{1} ,f_{1}) *( g_{2} ,f_{2})$.

\begin{lemma}
\label{lem:RAmult}
Assume for $i=1,2$ the following inversions in the Riordan group $( g_{i} ,f)^{-1} =( G_{i} ,F)$.
Then $( g_{1} g_{2} ,f)^{-1} =( G_{1} G_{2} ,F)$ follows.
\end{lemma}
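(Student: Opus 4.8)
The plan is to unwind the inverse formula in the Riordan group. Recall that $(g_i,f)^{-1} = \left(\frac{1}{g_i \circ \bar f},\, \bar f\right)$, so the hypothesis $(g_i,f)^{-1}=(G_i,F)$ means precisely that $F = \bar f$ (the compositional inverse of $f$, which does not depend on $i$) and $G_i = \frac{1}{g_i \circ \bar f}$ for $i=1,2$. The conclusion to be proved, $(g_1 g_2, f)^{-1} = (G_1 G_2, F)$, again by the inverse formula amounts to the two identities: the second component of $(g_1g_2,f)^{-1}$ is $\overline{f} = F$, which is immediate since $f$ is the same; and the first component is $\frac{1}{(g_1 g_2)\circ \bar f}$, which must equal $G_1 G_2$.

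So the single point to verify is $\frac{1}{(g_1 g_2)\circ \bar f} = G_1 G_2$. First I would note that composition with a fixed series $\bar f \in x\operatorname{\mathbb{R}}\llbracket x\rrbracket^\times$ is a ring homomorphism on $\operatorname{\mathbb{R}}\llbracket x\rrbracket$, hence multiplicative: $(g_1 g_2)\circ \bar f = (g_1 \circ \bar f)(g_2 \circ \bar f)$. Taking reciprocals (legitimate since each $g_i \circ \bar f \in \operatorname{\mathbb{R}}\llbracket x\rrbracket^\times$ when $g_i$ is, and in any case one works formally), we get
\begin{align*}
\frac{1}{(g_1 g_2)\circ \bar f} = \frac{1}{(g_1\circ \bar f)(g_2 \circ \bar f)} = \frac{1}{g_1\circ \bar f}\cdot\frac{1}{g_2\circ \bar f} = G_1 G_2.
\end{align*}
Combining with $F = \bar f$ gives $(g_1 g_2, f)^{-1} = \left(\frac{1}{(g_1g_2)\circ\bar f},\,\bar f\right) = (G_1 G_2, F)$, as claimed.

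The argument is essentially a one-line computation, so there is no serious obstacle; the only thing to be careful about is the bookkeeping of which component of the Riordan pair is being inverted and the fact that composition $h \mapsto h \circ \bar f$ respects products. If one wants to avoid even invoking the explicit inverse formula, an alternative is to multiply the matrices directly using the group law $(g_1 g_2, f) = (g_1, f) * (\ast)$ — but that is more cumbersome, and the reciprocal-of-a-product approach above is the cleanest. I would present the short computation above as the whole proof.
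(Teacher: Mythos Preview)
Your proof is correct. The paper's argument is essentially the same idea---multiplicativity of composition with a fixed series---but packaged slightly differently: rather than invoking the explicit inverse formula $(g,f)^{-1}=\bigl(\tfrac{1}{g\circ\bar f},\bar f\bigr)$, it directly verifies that $(g_1g_2,f)*(G_1G_2,F)=(1,x)$ by expanding the group law and factoring $g_1g_2\cdot((G_1G_2)\circ f)=\bigl(g_1(G_1\circ f)\bigr)\bigl(g_2(G_2\circ f)\bigr)=1$. Both routes hinge on the same one-line observation, so there is no substantive difference.
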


\begin{proof}
  The assumption means $( g_{i} ,f) *( G_{i} ,F) =( 1,x)$. It follows that
$$( g_{1} g_{2} ,f) *( G_{1} G_{2} ,F) =( g_{1} g_{2}( G_{1} G_{2} \circ F) ,f\circ F) =( g_{1}( G_{1} \circ F) g_{2}( G_{2} \circ F) ,f\circ F)=( 1,x).$$  
\end{proof}

The key observation we want to make now (it seems to be combinatorialist's folklore) is that
\begin{align*}
& B^{\operatorname{odd}} =\left( C,xC^{2}\right), \ \ \
B^{\operatorname{even}}=\left( C^{2} ,xC^{2}\right), \\
&\left( p^{[ 0]}( -x) ,x\frac{p^{[ 2]}( -x)}{p^{[ 0]}( -x)}\right) =\left(\frac{1-x}{1+x} ,\frac{x}{( 1+x)^{2}}\right),  \ \ \ \ \left( p^{[ 1]}( -x) ,x\frac{p^{[ 3]}( -x)}{p^{[ 1]}( -x)}\right) =\left(\frac{1-x}{( 1+x)^{2}} ,\frac{x}{( 1+x)^{2}}\right).
\end{align*}
are Riordan arrays. This permits to interprete Equations \eqref{eq:inv1}--\eqref{eq:PyrCat4} in Section \ref{sec:inv} as inversions in the Riordan group.

\begin{theorem} 
\label{thm:RABC}
Let $C$ and $B$ the generating functions of the Catalan numbers and of the central binomial coefficients, respectively. Then
\begin{enumerate}
    \item $ \left( C,xC^{2}\right)^{-1} =\left(\frac{1}{1+x} ,\frac{x}{( 1+x)^{2}}\right)$,

\item  $ \left( C^{2} ,xC^{2}\right)^{-1} =\left(\frac{1}{( 1+x)^{2}} ,\frac{x}{( 1+x)^{2}}\right)$,

\item  $ \left( B,xC^{2}\right)^{-1} =\left(\frac{1-x}{1+x} ,\frac{x}{( 1+x)^{2}}\right)$,

\item  $ \left( BC,xC^{2}\right)^{-1} =\left(\frac{1-x}{( 1+x)^{2}} ,\frac{x}{( 1+x)^{2}}\right)$.
\end{enumerate}
\end{theorem}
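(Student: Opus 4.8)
The plan is to verify the four inversions directly using the Lagrange inversion formula \eqref{eq:Lag}, reducing everything to a single computation of the compositional inverse of $f(x)=xC^2(x)$ and then handling the four different ``$g$'' parts either by direct series manipulation or, more efficiently, by invoking Lemma \ref{lem:RAmult}. First I would observe that for all four arrays the second component is $f=xC^2$, so by the formula for the inverse in the Riordan group, $(g,f)^{-1}=\bigl(\tfrac{1}{g\circ\overline f},\overline f\bigr)$, it suffices to (a) compute $\overline f$ once and for all, and (b) for each choice $g\in\{C,\ C^2,\ B,\ BC\}$ evaluate $g\circ\overline f$ and invert it.

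For step (a): since $xC=C^2-1$, i.e.\ $C^2=1+xC$, we have $f(x)=xC^2=x+x^2C=x(1+xC)=x\,C^2$; writing $f(x)=x/\phi(x)$ forces $\phi(x)=1/C^2(x)$. Actually the cleanest route is to guess $\overline f(x)=\tfrac{x}{(1+x)^2}$ and confirm $f(\overline f(x))=x$: one checks that $C\bigl(\tfrac{x}{(1+x)^2}\bigr)=1+x$, which follows from the quadratic $yC(y)^2-C(y)+\ldots$ — more precisely, $C(y)$ is the power-series root of $C=1+yC^2$, and substituting $y=\tfrac{x}{(1+x)^2}$, $C=1+x$ gives $1+x \overset{?}{=}1+\tfrac{x}{(1+x)^2}(1+x)^2=1+x$, which holds. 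Hence $C\circ\overline f=1+x$, and therefore $f\circ\overline f=\overline f\cdot(C\circ\overline f)^2=\tfrac{x}{(1+x)^2}(1+x)^2=x$, confirming $\overline{xC^2}=\tfrac{x}{(1+x)^2}$.

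Step (b) is then immediate: with $C\circ\overline f=1+x$ we get $C^2\circ\overline f=(1+x)^2$, $B\circ\overline f=\tfrac{1}{\sqrt{1-4\overline f}}$ where $1-4\overline f=1-\tfrac{4x}{(1+x)^2}=\tfrac{(1-x)^2}{(1+x)^2}$, so $B\circ\overline f=\tfrac{1+x}{1-x}$, and $(BC)\circ\overline f=(1+x)\cdot\tfrac{1+x}{1-x}=\tfrac{(1+x)^2}{1-x}$. Taking reciprocals gives exactly $\tfrac{1}{1+x}$, $\tfrac{1}{(1+x)^2}$, $\tfrac{1-x}{1+x}$, $\tfrac{1-x}{(1+x)^2}$, which are the claimed first components, and in each case the second component is $\overline f=\tfrac{x}{(1+x)^2}$. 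This proves (1)–(4). Alternatively, once (1) and (3) are established, (2) and (4) follow from Lemma \ref{lem:RAmult} applied with $f=xC^2$: $(C\cdot C,f)^{-1}=(\tfrac{1}{1+x}\cdot\tfrac{1}{1+x},\overline f)$ and $(B\cdot C,f)^{-1}=(\tfrac{1-x}{1+x}\cdot\tfrac{1}{1+x},\overline f)$, which is a pleasant shortcut worth recording. The only place demanding any care — the ``main obstacle'', though it is mild — is the verification that $C\bigl(\tfrac{x}{(1+x)^2}\bigr)=1+x$ as formal power series (choosing the correct branch/root so that the composition stays in $x\R\llbracket x\rrbracket$-land), and being careful that $B=C/(2-C)=\tfrac{1}{\sqrt{1-4x}}$ is the right substitution to make; everything else is bookkeeping with the Riordan-group inverse formula.
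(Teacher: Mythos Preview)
Your proof is correct and follows essentially the same strategy as the paper's: compute the compositional inverse $\overline{xC^{2}}=\tfrac{x}{(1+x)^{2}}$, establish $C\circ\overline{xC^{2}}=1+x$, handle $B$ separately, and then invoke Lemma~\ref{lem:RAmult} to get (2) and (4) from (1) and (3). The only differences are computational: the paper obtains $\overline{xC^{2}}$ via the Lagrange coefficient formula~\eqref{eq:Lag} (extracting $\tfrac{1}{n}\binom{2n}{n-1}$), whereas you guess the inverse and verify it through the functional equation $C=1+yC^{2}$; and for the $g$-part the paper checks the product $(g,f)*(G,F)=(1,x)$ on the $f$-side (e.g.\ for (3) it verifies $B\cdot\tfrac{1-xC^{2}}{1+xC^{2}}=1$ using $B=\tfrac{C}{2-C}$), while you compose $g$ with $\overline{f}$ directly and take the reciprocal. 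Both routes are equally short; your substitution argument for $C\bigl(\tfrac{x}{(1+x)^{2}}\bigr)=1+x$ is arguably more transparent, while the paper's use of~\eqref{eq:Lag} has the advantage of actually exercising the Lagrange formula it just introduced.
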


\begin{proof}

First we show $ \overline{xC^{2}}( x) =\frac{x}{( 1+x)^{2}}$ using formula \eqref{eq:Lag}. Namely, for $ n\geq 1$ we have
\begin{align*}
 \left[ y^{n}\right]\overline{\frac{x}{( 1+x)^{2}}}( y) =\frac{1}{n}\left[ x^{n-1}\right]( 1+x)^{2n} =\frac{1}{n}\left[ x^{n-1}\right]\sum _{j\geq 0}\binom{2n}{j} x^{j} =\frac{1}{n}\binom{2n}{n-1} =\left[ x^{n}\right] xC^{2}.
\end{align*}
To prove (1) it remains to check that $ C\circ \overline{xC^{2}} =1+x$. We do this by comparing $ C=C\circ \overline{xC^{2}} \circ \left( xC^{2}\right)$ \ with $ ( 1+x) \circ \left( xC^{2}\right) =\left( 1+xC^{2}\right) =1+C-1=C$, and remember that $ g\mapsto g\circ \left( xC^{2}\right)$ is invertible.

To prove (3) we observe that $ \left(\frac{1-xC^{2}}{1+xC^{2}}\right)^{-1} =\frac{C}{2-C} =B$ is the generating function for the central binomial coefficients.

Now (2) and (4) follow from (1) and (3) by Lemma \ref{lem:RAmult}.
\end{proof}

\begin{theorem}\label{thm:res}
Let $n,m$ be integers with $m\geq 0$. Then $\sum _{j\geq 0}\binom{2j+n}{j-m} x^{j} =BC^{n}( C-1)^{m}$.
\end{theorem}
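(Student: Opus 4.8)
The plan is to identify the generating function $\sum_{j\geq 0}\binom{2j+n}{j-m}x^j$ with a product of Riordan-array machinery already assembled in Theorem \ref{thm:RABC}, rather than manipulating binomial sums directly. First I would recognize that the matrix whose $(j,m)$ entry is $\binom{2j+n}{j-m}$ is, up to reindexing, exactly the kind of matrix appearing in Equations \eqref{eq:inv2}, \eqref{eq:inv4}, \eqref{eq:PyrCat2}, \eqref{eq:PyrCat4}: its transpose should be a Riordan array $(g,f)$ whose $m$th column generating function is $g(x)f(x)^m$. So the goal becomes: show that the $m$th column generating function of this matrix is $BC^n(C-1)^m$, i.e. that the matrix is the Riordan array $\bigl(BC^n,\ x(C-1)/x\bigr)$—more precisely, that $C-1=xC^2$ forces $f=xC^2$ and $g=BC^n$.

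The cleanest route is a direct computation with the Lagrange–Bürmann formula \eqref{eq:Lag}, exploiting that $\overline{xC^2}(x)=x/(1+x)^2$ was already proved in Theorem \ref{thm:RABC}. Concretely, I would argue that $\sum_j\binom{2j+n}{j-m}x^j=[x^m\text{-column of the Riordan array }(BC^n,xC^2)]$ by verifying the defining relation $[x^j]\,BC^n(x)(xC^2(x))^m=\binom{2j+n}{j-m}$. To do this, substitute $x\mapsto \overline{xC^2}(y)=y/(1+y)^2$: under this substitution $C(x)\mapsto 1+y$ (this is the identity $C\circ\overline{xC^2}=1+x$ from the proof of Theorem \ref{thm:RABC}(1)), so $xC^2\mapsto y$, $B=C/(2-C)\mapsto (1+y)/(1-y)$, and $C-1\mapsto y$. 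Hence $BC^n(C-1)^m$ as a function of $y$ becomes $\frac{(1+y)^{n+1}}{1-y}\,y^m\cdot\frac{1}{(1+y)^{?}}$—I would track the exact power of $(1+y)$ carefully—and then read off $[y^j]$ of the resulting rational function, which is a routine expansion of $(1+y)^{2j+n}/(1-y)$-type series giving precisely $\binom{2j+n}{j-m}$ after accounting for the $y^m$ shift and the extra $1/(1-y)$ coming from $B$.

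An alternative, perhaps more transparent, approach is to build $BC^n(C-1)^m$ up from known cases by convolution. We already know (Theorem \ref{thm:res} with $m=0$, or directly from $B(x)=1/\sqrt{1-4x}$ and the Fuß–Catalan convolution discussed in Section \ref{sec:cat}) that $BC^n=\sum_j\binom{2j+n}{j}x^j$ should hold; and $C-1=xC^2$. So $BC^n(C-1)^m=x^mBC^{n+2m}$, and the claim reduces to the single formula $BC^k(x)=\sum_{j}\binom{2j+k}{j}x^j$ for all $k\geq 0$, shifted by $x^m$: indeed $[x^j](x^mBC^{n+2m})=[x^{j-m}]BC^{n+2m}=\binom{2(j-m)+n+2m}{j-m}=\binom{2j+n}{j-m}$. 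Thus the whole theorem collapses to proving $BC^k(x)=\sum_j\binom{2j+k}{j}x^j$, which follows from $\bigl(BC^k,xC^2\bigr)$ being the Riordan array inverse of $\bigl(\tfrac{1-x}{(1+x)^{?}},\tfrac{x}{(1+x)^2}\bigr)$ via Theorem \ref{thm:RABC} and Lemma \ref{lem:RAmult}, or simply from the generalized binomial series identity $B(x)C(x)^k=\sum_j\binom{2j+k}{j}x^j$ recorded implicitly in Section \ref{sec:cat}.

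The main obstacle will be bookkeeping the exponents of $(1+y)$ (equivalently of $C$ versus $C-1$ versus $B$) through the substitution without an off-by-one error: the factor $C-1=xC^2$ carries an implicit factor of $x$, so the statement mixes a column index $m$ of the Riordan array with an honest power of the generating variable, and one must be careful whether $\binom{2j+n}{j-m}$ is indexed so that the $j=m$ entry equals $\binom{n+2m}{0}=1$. Once the substitution $x=y/(1+y)^2$, $C=1+y$ is in hand and the identity is reduced to reading off coefficients of $\frac{(1+y)^{2j+n}}{1-y}$-type rational functions, the remaining computation is elementary; the delicate point is purely the indexing, which I would pin down by checking the edge cases $m=0$ and small $j$ against the explicit convolution table of $C^l(x)$ given in Section \ref{sec:cat}.
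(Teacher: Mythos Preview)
Your proposal is correct and takes a genuinely different route from the paper. The paper proves the theorem by complex analysis: it writes $\binom{2j+n}{j-m}$ as the contour integral $\frac{1}{2\pi\sqrt{-1}}\oint_\gamma (1+z)^{2j+n}z^{-(j-m)}\frac{dz}{z}$, sums the geometric series in $j$ inside the integral, substitutes $w=1+z$, factors the resulting quadratic $w^{2}-(w-1)/x=(w-C(x))(w-1/x+C(x))$, and evaluates by the residue theorem at the pole $w=C(x)$, the other pole lying outside the contour for small $|x|$.

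Your argument is purely formal-power-series based. The reduction you propose is clean and correct: since $C-1=xC^{2}$, one has $BC^{n}(C-1)^{m}=x^{m}BC^{\,n+2m}$, and so the whole statement collapses to the single identity $BC^{k}(x)=\sum_{j}\binom{2j+k}{j}x^{j}$, after which $[x^{j}]\,x^{m}BC^{\,n+2m}=\binom{2(j-m)+n+2m}{\,j-m}=\binom{2j+n}{j-m}$. This is elegant, but be aware that the needed identity is not quite ``recorded implicitly in Section~\ref{sec:cat}'' as you claim: that section gives the Fu\ss--Catalan formula $C^{r}=\sum_{m}\frac{r}{2m+r}\binom{2m+r}{m}x^{m}$, which is off by the factor $B$. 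You still owe a proof of $BC^{k}=\sum_{j}\binom{2j+k}{j}x^{j}$, e.g.\ via the Lagrange--B\"urmann substitution you sketch (using $C\circ\overline{xC^{2}}=1+y$ and $B\circ\overline{xC^{2}}=(1+y)/(1-y)$ from the proof of Theorem~\ref{thm:RABC}), or by writing $B=1/(1-2xC)$ and summing the geometric series against the Fu\ss--Catalan formula. This is routine but not free.

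What each approach buys: the paper's residue computation is self-contained and treats all integers $n$ in one uniform stroke; your approach avoids complex analysis entirely and meshes naturally with the Riordan-array language of the surrounding section, so that the subsequent corollary becomes essentially a tautology. One small caveat for your route: since $n$ ranges over all of $\mathbb{Z}$, the exponent $k=n+2m$ may be negative, and you need $BC^{k}=\sum_{j}\binom{2j+k}{j}x^{j}$ in that regime too; this still holds because $C$ is a unit in $\mathbb{R}\llbracket x\rrbracket$, but it deserves a sentence.
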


\begin{proof} We generalize here a trick from \cite[Exercise 8.6]{Pap}.
Note that for a simple closed curve around $0\in \mathbb{C}$ we have for $N,M\in \mathbb{Z}$ that
$\binom{N}{M} =\frac{1}{2\pi \sqrt{-1}}\int _{\gamma }\frac{( 1+z)^{N}}{z^{M}}\frac{dz}{z}$. For $x\in \mathbb{C}$ with $|x|<\operatorname{dist(0,\gamma})$ we can swap summation and integral in the following calculation: 
\begin{align*}
\sum _{j\geq 0}\binom{2j+n}{j-m} x^{j} &=\sum _{j\geq 0}\frac{1}{2\pi \sqrt{-1}}\int _{\gamma }\frac{( 1+z)^{n+2j}}{z^{j-m}} x^{j}\frac{dz}{z}\\
&=\sum _{j\geq 0}\frac{1}{2\pi \sqrt{-1}}\int _{\gamma } z^{m}( 1+z)^{n}\left(\frac{( 1+z)^{2}}{z} x\right)^{j}\frac{dz}{z} =\frac{1}{2\pi \sqrt{-1}}\int _{\gamma } z^{m}( 1+z)^{n}\frac{1}{1-\frac{( 1+z)^{2}}{z} x}\frac{dz}{z}\\
&=\frac{1}{2\pi \sqrt{-1}}\frac{1}{x}\int _{\gamma } \frac{z^{m}( 1+z)^{n}}{\frac{z}{x} -( 1+z)^{2}} dz
=-\frac{1}{2\pi \sqrt{-1}}\frac{1}{x}\int _{\gamma } \frac{z^{m}( 1+z)^{n}}{( 1+z)^{2} -\frac{z}{x}} dz.
\end{align*}
We substitute $1+z=w,\ xw^{r} =( w-1)^{d}$ and write
\begin{align*}
\sum _{j\geq 0}\binom{n+2j}{k+j} x^{j} =-\frac{1}{2\pi \sqrt{-1}}\frac{1}{x}\int _{\gamma +1} \frac{w^{n}( w-1)^{m}}{w^{2} -\frac{w-1}{x}} dw.
\end{align*}
Factoring the quadratic polynomial in the denominator
\begin{align*}
w^{2} -\frac{w-1}{x} &=\left( w-\frac{1}{2x}\right)^{2} -\frac{1-4x}{4x^{2}} \\
&=\left( w-\frac{1-\sqrt{1-4x}}{2x}\right)\left( w-\frac{1+\sqrt{1-4x}}{2x}\right) =( w-C(x))\left( w-\frac{1}{x} +C(x)\right)
\end{align*}
and observing that $C(x)-\frac{1}{x} +C(x)=\frac{-1}{xB(x)}$ we can apply now the Residue Theorem and deduce the claim. In fact the second pole is outside $\gamma+1$ for $|x|$ small enough.
\end{proof}

\begin{corollary} $\left( B,xC^{2}\right) =\left[\binom{2n}{n-m}\right]_{n,m\geq 0} ,\ \left( BC,xC^{2}\right) =\left[\binom{2n+1}{n-m}\right]_{n,m\geq 0}$.
\end{corollary}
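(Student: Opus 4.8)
The plan is to compute the $(n,k)$ entries of the Riordan arrays $\left(B,xC^{2}\right)$ and $\left(BC,xC^{2}\right)$ straight from the definition and then read off the binomial coefficients using Theorem \ref{thm:res}, which has already done all of the analytic work.

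First I recall that by definition the $(n,k)$ entry of a Riordan array $(g,f)$ is $[x^{n}]\,g(x)f(x)^{k}$. Applying this with $g=B$, $f=xC^{2}$ gives that the $(n,k)$ entry of $\left(B,xC^{2}\right)$ equals $[x^{n}]\,B(x)\bigl(xC(x)^{2}\bigr)^{k}=[x^{n-k}]\,B(x)C(x)^{2k}$, and likewise the $(n,k)$ entry of $\left(BC,xC^{2}\right)$ equals $[x^{n-k}]\,B(x)C(x)^{2k+1}$.

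Next I invoke Theorem \ref{thm:res} with the parameter $m$ there set to $0$ and the integer parameter $n$ there set to $2k$, respectively $2k+1$ (both legitimate nonnegative integers since $k\geq 0$). This yields $BC^{2k}=\sum_{j\geq 0}\binom{2j+2k}{j}x^{j}$ and $BC^{2k+1}=\sum_{j\geq 0}\binom{2j+2k+1}{j}x^{j}$. Extracting the coefficient of $x^{n-k}$ gives $\binom{2(n-k)+2k}{n-k}=\binom{2n}{n-k}$ in the first case and $\binom{2(n-k)+2k+1}{n-k}=\binom{2n+1}{n-k}$ in the second. Relabelling the column index $k$ as $m$ produces exactly the matrices $\left[\binom{2n}{n-m}\right]_{n,m\geq 0}$ and $\left[\binom{2n+1}{n-m}\right]_{n,m\geq 0}$; note that these entries vanish for $m>n$, consistent with lower-triangularity.

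There is no real obstacle here: the only point requiring care is matching the index conventions (that $n$ is the coefficient-extraction/row index while $m=k$ is the power of $f$/column index) and confirming that $2k$ and $2k+1$ are admissible values of the integer exponent appearing in Theorem \ref{thm:res}. As a sanity check one could instead invert the identities in Theorem \ref{thm:RABC}(3)--(4), since the transposes of those inverse arrays are the odd and even Catalan triangles; but the direct coefficient extraction above is the shortest route.
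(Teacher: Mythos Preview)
Your proof is correct and follows essentially the same route as the paper: both arguments identify the $m$th column generating function of the Riordan array with an instance of Theorem~\ref{thm:res} and read off the binomial coefficient. The only cosmetic difference is the specialization chosen---the paper applies Theorem~\ref{thm:res} with parameters $(0,m)$ (resp.\ $(1,m)$) and then uses $C-1=xC^{2}$ to rewrite $(C-1)^{m}=x^{m}C^{2m}$, whereas you first factor out $x^{k}$ and then apply the theorem with parameters $(2k,0)$ (resp.\ $(2k+1,0)$); the two choices yield the same identity after relabelling.
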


\begin{proof} According to the Theorem \ref{thm:res} we have
\begin{align*}
\sum _{n\geq 0}\binom{2n}{n-m} x^{n} =( C-1)^{m} B=x^mBC^{2m} ,\ \ \sum _{n\geq 0}\binom{2n+1}{n-m} x^{n} =C( C-1)^{m} B=x^mBC^{2m+1}.
\end{align*}
\end{proof}

\begin{corollary} For $|x|<1$ we have
    \begin{enumerate}
        \item $ \frac{1-x^{2}}{( 1+x)^{2} -4\cos^{2}( \theta ) x} =1+2\sum _{n\geq 1}\cos( 2n\theta ) x^{n}$,
 \item $ \frac{2( 1-x)\cos( \theta )}{( 1+x)^{2} -4x\cos^{2}( \theta )} =\sum _{n\geq 0}\cos(( 2n+1) \theta ) x^{n}$,
 \item $ \frac{1+x}{( 1+x)^{2} -4\cos^{2}( \theta )} =\sum _{n\geq 0}\frac{\sin(( 2n+1) \theta )}{\sin( \theta )} x^{n}$,
 \item $\frac{2\cos( \theta )}{( 1+x)^{2} -4x\cos^{2}( \theta )} =\sum _{n\geq 0}\frac{\sin( 2( n+1) \theta )}{\sin( \theta )} x^{n}$.
    \end{enumerate}
\end{corollary}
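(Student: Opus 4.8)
The plan is to derive all four formulas from the two Chebyshev generating functions \eqref{eqn:ChebGFT} and \eqref{eqn:ChebGFU} by the classical bisection (even/odd part) trick, together with the identifications $T_{n}(\cos\theta)=\cos(n\theta)$ and $U_{n}(\cos\theta)=\sin((n+1)\theta)/\sin\theta$ from the Theorem in Section~\ref{sec:TU}.

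First I would specialize \eqref{eqn:ChebGFT} and \eqref{eqn:ChebGFU} at the point $\cos\theta$, keeping the generating variable $t$ with $|t|<1$ so that both series converge absolutely for every real $\theta$:
\[
F(t):=\sum_{n\geq 0}\cos(n\theta)\,t^{n}=\frac{1-t\cos\theta}{1-2t\cos\theta+t^{2}},\qquad
G(t):=\sum_{n\geq 0}\frac{\sin((n+1)\theta)}{\sin\theta}\,t^{n}=\frac{1}{1-2t\cos\theta+t^{2}}.
\]
Next I would split off even and odd powers of $t$ by means of $H_{0}(t)=\tfrac12\bigl(H(t)+H(-t)\bigr)$ and $H_{1}(t)=\tfrac12\bigl(H(t)-H(-t)\bigr)$. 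Multiplying numerator and denominator by $1+2t\cos\theta+t^{2}$ produces the common denominator $D(t):=(1+t^{2})^{2}-4t^{2}\cos^{2}\theta$, and a short computation (using $1-2\cos^{2}\theta=-\cos2\theta$) gives
\[
F_{0}(t)=\frac{1-t^{2}\cos2\theta}{D(t)},\qquad
F_{1}(t)=\frac{t\cos\theta\,(1-t^{2})}{D(t)},\qquad
G_{0}(t)=\frac{1+t^{2}}{D(t)},\qquad
G_{1}(t)=\frac{2t\cos\theta}{D(t)}.
\]

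Since $F_{0}(t)=\sum_{n\geq0}\cos(2n\theta)t^{2n}$, $t^{-1}F_{1}(t)=\sum_{n\geq0}\cos((2n+1)\theta)t^{2n}$, $G_{0}(t)=\sum_{n\geq0}\tfrac{\sin((2n+1)\theta)}{\sin\theta}t^{2n}$ and $t^{-1}G_{1}(t)=\sum_{n\geq0}\tfrac{\sin(2(n+1)\theta)}{\sin\theta}t^{2n}$, the substitution $x=t^{2}$ in the last three expressions yields (2), (3) and (4) immediately. For (1) I would instead write $1+2\sum_{n\geq1}\cos(2n\theta)x^{n}=2F_{0}(t)-1$ with $x=t^{2}$, clear the denominator $D$, and verify that the numerator $2(1-x\cos2\theta)-(1+x)^{2}+4x\cos^{2}\theta$ collapses to $1-x^{2}$ after replacing $4x\cos^{2}\theta$ by $2x(1+\cos2\theta)$. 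Finally, $|t|<1$ is equivalent to $|x|<1$, which is exactly the asserted range, and the apparent singularity at $\sin\theta=0$ is removable on both sides, so no separate hypothesis on $\theta$ is needed.

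I do not expect a genuine obstacle here: the whole argument is a bounded amount of algebra once the bisection step is set up. The one place that calls for care is the notational collision — the variable $x$ in the statement is the \emph{square} of the generating variable $t$, not the argument of the polynomials $T_{n},U_{n}$ — and, relatedly, keeping precise track of the constant factors, which the bisection computation pins down unambiguously. A more structural alternative, in the spirit of Sections~\ref{sec:inv}–\ref{sec:riordan}, would be to read the four identities off the generating-function forms of the matrix inversions there (equivalently, off Theorem~\ref{thm:res} and its corollaries) via the Fundamental Theorem of Riordan Arrays; but the bisection route above is shorter and entirely self-contained.
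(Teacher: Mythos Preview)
Your bisection argument is correct and genuinely different from the paper's proof. The paper does exactly what you flag in your last sentence as the ``structural alternative'': for each item it takes the appropriate Riordan array from Theorem~\ref{thm:RABC} --- respectively $\bigl(\tfrac{1-x}{1+x},\tfrac{x}{(1+x)^2}\bigr)$, $\bigl(\tfrac{1-x}{(1+x)^2},\tfrac{x}{(1+x)^2}\bigr)$, $\bigl(\tfrac{1}{1+x},\tfrac{x}{(1+x)^2}\bigr)$, $\bigl(\tfrac{1}{(1+x)^2},\tfrac{x}{(1+x)^2}\bigr)$ --- applies it via the Fundamental Theorem of Riordan Arrays to the geometric-series vector $\overrightarrow{\kappa_\theta^{\mathrm{even}}}$ or $\overrightarrow{\kappa_\theta^{\mathrm{odd}}}$, and then reads off the result using the matrix identities \eqref{eq:inv1}, \eqref{eq:inv3}, \eqref{eq:PyrCat2}, \eqref{eq:PyrCat4}. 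Your route is shorter and uses nothing beyond the two generating functions \eqref{eqn:ChebGFT}--\eqref{eqn:ChebGFU}; the paper's route buys the connection to the Riordan inversions that are the actual subject of Section~\ref{sec:riordan}. As a side effect, your explicit formulas reveal two typos in the statement: your $t^{-1}F_1$ gives $\frac{(1-x)\cos\theta}{(1+x)^2-4x\cos^2\theta}$, so the ``$2$'' in the numerator of (2) is spurious, and the denominator of (3) should read $(1+x)^2-4x\cos^2\theta$; both corrections are confirmed by the $\kappa$-versions in the paper's own proof.
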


\begin{proof}
    We put
$\kappa _{\theta }^{\operatorname{even}}( x) :=\sum _{n\geq 0}\left( \kappa ^{2}( \theta ) x\right)^{n} =\frac{1}{1-\kappa ^{2}( \theta ) x}$
and note that the right hand side of Equation \eqref{eq:inv1} is
$$ \left(\frac{1-x}{1+x} ,\frac{x}{( 1+x)^{2}}\right)\overrightarrow{\kappa _{\theta }^{\operatorname{even}}} =\overrightarrow{\frac{1-x}{1+x}\frac{1}{1-\kappa ^{2}( \theta )\frac{x}{( 1+x)^{2}}}} =\overrightarrow{\frac{\left( 1-x^{2}\right)}{( 1+x)^{2} -x\kappa ^{2}( \theta ) }}.$$
This shows $ \frac{1-x^{2}}{( 1+x)^{2} -\kappa ^{2}( \theta ) x} =1+\sum _{n\geq 1} \kappa ( 2n\theta ) x^{n}$, which is equivalent to (1). Similarly, we put $\kappa _{\theta }^{\operatorname{odd}}( x) :=\sum _{n\geq 0} \kappa ( \theta )\left( \kappa ^{2}( \theta ) x\right)^{n} =\frac{\kappa ( \theta )}{1-\kappa ^{2}( \theta ) x}$
and express the right hand side of Equation \eqref{eq:inv3} as
$$ \left(\frac{1-x}{( 1+x)^{2}} ,\frac{x}{( 1+x)^{2}}\right)\overrightarrow{\kappa _{\theta }^{\operatorname{odd}}} =\overrightarrow{\frac{1-x}{( 1+x)^{2}}\frac{\kappa ( \theta )}{1-\kappa ^{2}( \theta )\frac{x}{( 1+x)^{2}}}} =\overrightarrow{\frac{( 1-x) \kappa ( \theta )}{( 1+x)^{2} -x\kappa ^{2}( \theta ) }}.$$
This shows $ \frac{( 1-x) \kappa ( \theta )}{( 1+x)^{2} -\kappa ^{2}( \theta ) x} =\sum _{n\geq 0} \kappa (( 2n+1) \theta ) x^{n}$, which is equivalent to (2). Writing the right hand side of Equation \ref{eq:PyrCat2} 
$$\left(\frac{1}{1+x} ,\frac{x}{( 1+x)^{2}}\right)\overrightarrow{\kappa _{\theta }^{\operatorname{even}}} =\overrightarrow{\frac{1}{1+x}\frac{1}{1-\kappa ^{2}( \theta )\frac{x}{( 1+x)^{2}}}} =\overrightarrow{\frac{1+x}{( 1+x)^{2} -x\kappa ^{2}( \theta )}}$$
we prove $ \frac{1+x}{( 1+x)^{2} -x\kappa ^{2}( \theta )} =\sum _{n\geq 0} \nu ( 2n\theta ) x^{n}$, and hence (3). Finally, writing the right hand side of Equation \eqref{eq:PyrCat4} as
$$ \left(\frac{1}{( 1+x)^{2}} ,\frac{x}{( 1+x)^{2}}\right)\overrightarrow{\kappa _{\theta }^{\operatorname{odd}}} =\overrightarrow{\frac{1}{( 1+x)^{2}}\frac{\kappa ( \theta )}{1-\kappa ^{2}( \theta )\frac{x}{( 1+x)^{2}}}} =\overrightarrow{\frac{\kappa ( \theta )}{( 1+x)^{2} -x\kappa ^{2}( \theta )}}$$
we deduce $ \frac{\kappa ( \theta )}{( 1+x)^{2} -x\kappa ^{2}( \theta )} =\sum _{n\geq 0} \nu (( 2n+1) \theta ) x^{n}$, which is equivalent to (4). 
\end{proof}

\section{Hypergeometric identities from trigonometric intergrals}
\label{sec:fc}

This section is not essential for the logic of the paper, but gives an example how the matrix inversions can be used. Our first aim is to deduce further hypergeometric identities by inspecting the trigonometric integral $\int _{0}^{2\pi }\cos^{n}( \theta )\sin^{m}( \theta ) d\theta $.
This could be evaluated using the beta function, but we prefer here an more elementary treatment. When $ n\geq 1,\ m\geq 0$ we perform the partial integration
\begin{align*}
\ I_{n,m} =\int _{0}^{2\pi }\cos^{n}( \theta )\sin^{m}( \theta ) \ d\theta 
=\left[\frac{\cos^{n-1}( \theta )\sin^{m+1}( \theta )}{m+1}\right]_{0}^{2\pi } \ +\int _{0}^{2\pi }( n-1) \ \sin( \theta ) \ \cos^{n-2}( \theta )\frac{\ \sin^{m+1}( \theta )}{m+1} \ d\theta.
\end{align*}
We deduce $ ( m+1) I_{n,m} =( n-1) \ I_{n-2,m+2}$. From the substitution  $ s=\theta -\pi /2$ we find 
\begin{align*}
I_{n,m} =\int _{0}^{2\pi }\cos^{n}( \theta )\sin^{m}( \theta ) d\theta =\int _{-\pi /2}^{3\pi /2}\left( -\sin( s)^{n}\right)\cos^{m}( s) ds=( -1)^{n} I_{m,n}. \end{align*}
Evidently, \ $ I_{n,1} =\int _{0}^{2\pi }\cos^{n}( \theta )\sin( \theta ) \ d\theta =\left[ -\frac{\cos^{n+1}( t)}{n+1}\right]_{0}^{2\pi } =0$. Moreover, from
the power reduction formulas (see Theorem \ref{thm:powerred}) we obtain
\begin{align}\label{eq:evenbasecase}
I_{n,0} =\begin{cases}
0 & \text{if} \ n\ \text{odd}\\
2\pi \ 2^{-n}\binom{n}{n/2} & \text{if} \ n\ \text{even}
\end{cases} .
\end{align}
Hence $I_{n,m} \neq 0$ if and only if $ n$ are $ m$ even. By induction we obtain for $ k\geq l\geq 0$:
\begin{align*}
 I_{2k,2l} =\left(\prod _{m=1}^{l} \ \ \frac{( 2m+1)}{( 2k+2m-1)}\right) \ I_{2( k+l) ,0}.
 \end{align*}
But $ \prod _{m=1}^{l}( 2k+2m-1) =\frac{( 2k+2l-1) !!}{( 2k-1) !!} \ $and hence $ $$ I_{2k,2l} =\frac{( 2k-1) !!}{( 2k+2l-1) !!\ \ }( 2l-1) !!\ I_{2( k+l) ,0}$. From
\begin{align*}
\frac{( 2k-1) !!}{( 2k+2l-1) !!}( 2l-1) !!=\frac{2^{k+l-1}( k+l-1) !}{{(2k+2l-1)!}}
\frac{(2k-1)!}{2^{k-1}( k-1) !} \ \frac{( 2l-1) !}{2^{l-1}( l-1) !}\\
=2\frac{\binom{k+l-1}{k-1} l!}{{\binom{2k+2l-1}{2k-1}}{(2l)!}}\frac{( 2l-1) !}{( l-1) !} =\frac{\binom{k+l-1}{k-1}}{\binom{2k+2l-1}{2k-1}} =\frac{\binom{k+l-1}{l}}{\binom{2k+2l-1}{2l}}
\end{align*}
Remembering $ I_{2k,2l} =I_{2l,2k}$, we finally deduce that for $ k,l\geq 0$
\begin{align*}
I_{2k,2l} =\frac{\binom{k+l-1}{l}}{\binom{2k+2l-1}{2l}} I_{2( k+l) ,0}.
\end{align*}
Combining this with \eqref{eq:evenbasecase} we have proven the following.

\begin{theorem}
\label{thm:trigint}
For integers $ k,l\geq 0$ we have
$ \int _{0}^{2\pi }\cos^{2k}( \theta )\sin^{2l}( \theta ) \ d\theta =2\pi \ 2^{-2( k+l)}\frac{\binom{k+l-1}{l}\binom{2( k+l)}{k+l}}{\binom{2( k+l) -1}{2l}}$.
\end{theorem}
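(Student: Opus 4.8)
The plan is to work directly with $I_{n,m} := \int_0^{2\pi}\cos^n(\theta)\sin^m(\theta)\,d\theta$ and to reduce the general even/even case to the already‑available evaluation of $I_{2N,0}$. First I would record two structural facts. Integration by parts, differentiating $\cos^{n-1}(\theta)$ and integrating $\cos(\theta)\sin^m(\theta)$, makes the boundary term $\bigl[\cos^{n-1}(\theta)\sin^{m+1}(\theta)/(m+1)\bigr]_0^{2\pi}$ vanish and yields the recursion $(m+1)I_{n,m}=(n-1)I_{n-2,m+2}$ for $n\geq 1$. The substitution $s=\theta-\pi/2$ turns $\cos$ into $-\sin$ and $\sin$ into $\cos$, so $I_{n,m}=(-1)^n I_{m,n}$; combined with $I_{n,1}=\bigl[-\cos^{n+1}(\theta)/(n+1)\bigr]_0^{2\pi}=0$ and the recursion, this shows $I_{n,m}=0$ unless both $n$ and $m$ are even, so only $I_{2k,2l}$ needs treatment.

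Next I would pin down the base case from the power reduction formula Theorem \ref{thm:powerred}(1): multiplying $\cos^{2N}(\theta)$ by $2^{2N-1}$ and integrating termwise over $[0,2\pi]$ annihilates every $\cos(2k\theta)$ with $k\geq 1$, leaving $2^{2N-1}I_{2N,0}=2\pi\cdot\tfrac12\binom{2N}{N}$, i.e. $I_{2N,0}=2\pi\,2^{-2N}\binom{2N}{N}$ (this is Eqn. \eqref{eq:evenbasecase}). Then I iterate: for $k\geq l\geq 0$, applying $(m+1)I_{n,m}=(n-1)I_{n-2,m+2}$ exactly $l$ times, starting from $I_{2N,0}$ with $N=k+l$ and ending at $I_{2(N-l),2l}=I_{2k,2l}$, telescopes the accumulated factor $1\cdot 3\cdots(2l-1)$ over $(2N-1)(2N-3)\cdots(2k+1)$ into
\[
I_{2k,2l}=\frac{(2l-1)!!\,(2k-1)!!}{(2N-1)!!}\,I_{2N,0},
\]
and the symmetry $I_{2k,2l}=I_{2l,2k}$ removes the hypothesis $k\geq l$. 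Substituting the base case and rewriting $(2n-1)!!=(2n)!/(2^n n!)$ puts everything in terms of ordinary factorials, and the closed form follows from the elementary identity
\[
\frac{(2l-1)!!\,(2k-1)!!}{(2N-1)!!}=\frac{\binom{N-1}{l}}{\binom{2N-1}{2l}},
\]
which one verifies by expanding both sides and cancelling, giving $I_{2k,2l}=2\pi\,2^{-2N}\binom{N-1}{l}\binom{2N}{N}\big/\binom{2N-1}{2l}$.

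The computations are all routine, and the main obstacle is only bookkeeping at the two boundaries of the parameter range. For $k\geq 1$ and $l\geq 1$ the last displayed identity is a genuine factorial identity. For $k=0$ (and symmetrically $l=0$) the symbols $\binom{N-1}{l}$ and $\binom{2N-1}{2l}$ are not given by $n!/(m!(n-m)!)$, so I would read the quotient through falling factorials: one then checks that $\binom{N-1}{l}/\binom{2N-1}{2l}$ extends continuously in $k$ with value $1$ as $k\to 0$, which is exactly consistent with $I_{0,2l}=I_{2l,0}=2\pi\,2^{-2l}\binom{2l}{l}$ obtained from the symmetry and the base case. I expect no genuine difficulty beyond making this boundary convention explicit.
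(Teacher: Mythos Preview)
Your proposal is correct and follows essentially the same route as the paper: the same integration-by-parts recursion $(m+1)I_{n,m}=(n-1)I_{n-2,m+2}$, the same shift $s=\theta-\pi/2$ for the symmetry $I_{n,m}=(-1)^nI_{m,n}$, the same base case $I_{2N,0}=2\pi\,2^{-2N}\binom{2N}{N}$ from Theorem~\ref{thm:powerred}, the same telescoping to $\frac{(2l-1)!!(2k-1)!!}{(2N-1)!!}$, and the same rewriting as $\binom{N-1}{l}/\binom{2N-1}{2l}$. Your explicit treatment of the $k=0$ (or $l=0$) boundary via falling factorials is a point the paper passes over silently, so in that respect your write-up is slightly more careful.
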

We evaluate
\begin{align*}
2\pi =\int _{0}^{2\pi }\left(\cos^{2}( \theta ) \ +\sin^{2}( \theta )\right)^{m} \ d\theta =\sum _{{\substack{
k,l\geq 0\\
k+l=m
}}}\binom{k+l}{l}\int _{0}^{2\pi }\cos^{2k}( \theta ) \ \ \sin^{2l}( \theta ) \ d\theta \\
=2\pi \sum_{\substack{
k,l\geq 0\\
k+l=m
}}
\binom{k+l}{l} \ 2^{-2( k+l)}\frac{\binom{k+l-1}{l}\binom{2( k+l)}{k+l}}{\binom{2k+2l-1}{2l}}
\end{align*}
and find as a corollary 
\begin{align}
\label{eq:weirdhyp}
\frac{2^{2m}}{\binom{2m}{m}} =\sum _{l=0}^{m}\frac{\binom{m-1}{l}\binom{m}{l}}{\binom{2m-1}{2l}}.
\end{align}
The rest of this section we use our base change of the previous section to show that the infinite symmetric matrix $M=[M_{kl}]_{k,l\geq 0}$, 
\begin{align*}
M_{kl} :=\frac{1}{2\pi }\int _{-\pi}^{\pi } \kappa ^{2k}( \theta ) \sigma ^{2l}( \theta ) \ d\theta =\frac{\binom{k+l-1}{l}\binom{2( k+l)}{k+l}}{\binom{2( k+l) -1}{2l}}.
\end{align*}
has integer entries. On the way we deduce another hypergeometric identity (namely Eqn. \eqref{eq:LU} below). For the kabbalist\footnote{In the words of Fields medalist Edward Witten, $M$ stands for Magical, Mystery or Membrane.} the matrix $M$ looks actually a bit curious
\begin{align*}
    M=\begin{bmatrix}
1 & 2 & 6 & 20 & 70 & 252 & 924 & \cdots \\
2 & 2 & 4 & 10 & 28 & 84 & 264 & \\
6 & 4 & 6 & 12 & 28 & 72 & 198 & \\
20 & 10 & 12 & 20 & 40 & 90 & 220 & \\
70 & 28 & 28 & 40 & 70 & 140 & 308 & \\
252 & 84 & 72 & 90 & 140 & 252 & 504 & \\
924 & 264 & 198 & 220 & 308 & 504 & 924 & \\
\cdots  &  &  &  &  &  &  & \cdots 
\end{bmatrix}.
\end{align*}
The first row is comprised of the central binomial coefficients and the second row of doubled Catalan numbers. The lower triangular part of $M$ appears in the OEIS as entry \url{https://oeis.org/A182411}. Let us mention other empirical observations about the entries of $M$. It appears that $M_{kl}=\frac{( 2k) !( 2l) !}{k!l!( k+l) !}$ and the textbook \cite{Uspensky} asks in Exercise 4. on p.103 to show that this is an integer. Nowadays these numbers are called \emph{super Catalan numbers} (see \cite{superGessel, Borisov})\footnote{In \cite{superWild} the rational numbers in Theorem \ref{thm:trigint} have been named \emph{circular super Catalan numbers}.}. That our $M_{kl}$ are the super Catalan numbers can be easily verified
\begin{align*}
 \frac{( 2k+2l) !\ }{( k+l) !^{2}}\frac{\ ( 2l) !\ ( 2k-1) !\ !}{\ ( 2k+2l-1) !\ }\frac{\ ( k+l-1) !}{\ \ ( k-1) !l!} =\frac{2( k+l) \ ( 2l) !\ ( 2k) !\ }{( k+l) !( k+l) \ ( k-1) !l!2k} =\frac{\ ( 2l) !\ ( 2k) !\ }{( k+l) !\ ( k-1) !l!k}.  
\end{align*}
Moreover, according to computer algebra the matrix $M$ admits the neat LU factorization 
\begin{align}
\label{eq:LU}
M=L\operatorname{diag}(1,-2,2,-2,2,\dots)L^T,
\end{align}
where $L$ is the lower triangular matrix $\left[\binom{2i}{i-j}\right]_{i,j\geq 0}$. This entails that the upper left $n\times n$-block $M^{(n)}$ of $M$ has determinant $( -1)^{\lfloor n/2\rfloor } 2^{n-1}$ and that $M$ is integer. Once again, the number sequence $1,2,2,2,\dots$ pops up. 

It turns out that with our preparations we can easily find an explanation for Eqn. \eqref{eq:LU}. By the results of Section \ref{sec:inv} we have
\begin{align*}
\begin{bmatrix}
\kappa ^{0}( \theta)&
\kappa ^{2}( \theta)&
\kappa ^{4}( \theta)&
\kappa ^{6}( \theta)&
\cdots 
\end{bmatrix}&=\begin{bmatrix}
1& \kappa ( \theta)&\kappa ( 2\theta)&\kappa ( 4\theta)&\cdots 
\end{bmatrix}L^T,\\
\begin{bmatrix}
\sigma ^{0}( \theta)&
\sigma ^{2}( \theta)&
\sigma ^{4}( \theta)&
\sigma ^{6}( \theta)&
\cdots 
\end{bmatrix}&=\begin{bmatrix}
1& \kappa ( \theta)&\kappa ( 2\theta)&\kappa ( 4\theta)&\cdots 
\end{bmatrix}\operatorname{diag}(1,-1,1,-1,\dots)L^T
.
\end{align*}
Transposing and integrating we deduce
 \begin{align*}
M&=\frac{1}{2\pi }\int _{-\pi }^{\pi }\begin{bmatrix}
\kappa ^{0} (\theta )\\
\kappa ^{2} (\theta )\\
\kappa ^{4} (\theta )\\
\kappa ^{6} (\theta )\\
\cdots 
\end{bmatrix}\begin{bmatrix}
\sigma ^{0} (\theta ) & \sigma ^{2} (\theta ) & \sigma ^{4} (\theta ) & \sigma ^{6} (\theta ) & \cdots 
\end{bmatrix} d\theta \\
&=L\left(\frac{1}{2\pi }\int _{-\pi }^{\pi }\begin{bmatrix}
1\\
\kappa (2\theta )\\
\kappa (4\theta )\\
\kappa (6\theta )\\
\cdots 
\end{bmatrix}\begin{bmatrix}
1 & \kappa (2\theta ) & \kappa (4\theta ) & \kappa (6\theta ) & \cdots 
\end{bmatrix} d\theta \right)\operatorname{diag} (1,-1,1,-1,\dotsc )L^{T}\\
&=L\operatorname{\operatorname{diag} (1,2,2,2,\dotsc )diag} (1,-1,1,-1,\dotsc )L^{T}.
\end{align*}

\section{Spread polynomials}
\label{sec:spread}

The \emph{spread polynomials} $(S_{n}( x))_{n\geq 0}$ (see \cite{Wildberger}) play a central role in Wildberger's \emph{Rational Trigonometry}, an approach to plane geometry free of transcendental expressions. They encode rotations in the plane and are defined by the following recursion:
\begin{align*}
S_{0}( x) &=0,\ S_{1}( x) =x,\\
S_{n}( x) &=2( 1-2x) S_{n-1}( x) -S_{n-2}( x) +2x.
\end{align*}
\begin{proposition} \label{prop:ST} For every integer $n\geq 0$ we have $S_{n}( x) =\frac{1-T_{n}( 1-2x)}{2}$.
\end{proposition}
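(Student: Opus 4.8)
The plan is to prove the identity $S_n(x) = \frac{1 - T_n(1-2x)}{2}$ by induction on $n$, matching the defining recursions of the two polynomial families. First I would verify the base cases: for $n=0$ we need $S_0(x) = 0 = \frac{1 - T_0(1-2x)}{2} = \frac{1-1}{2}$, which holds since $T_0 \equiv 1$; for $n=1$ we need $S_1(x) = x = \frac{1 - T_1(1-2x)}{2} = \frac{1 - (1-2x)}{2} = \frac{2x}{2} = x$, which also holds since $T_1(z) = z$.

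For the inductive step, fix $n \geq 2$ and assume the formula holds for $n-1$ and $n-2$. Write $z := 1-2x$ for brevity, so the claim to prove is $S_n(x) = \frac{1 - T_n(z)}{2}$. Starting from the spread recursion $S_n(x) = 2(1-2x)S_{n-1}(x) - S_{n-2}(x) + 2x = 2z\,S_{n-1}(x) - S_{n-2}(x) + (1-z)$, I would substitute the inductive hypotheses $S_{n-1}(x) = \frac{1 - T_{n-1}(z)}{2}$ and $S_{n-2}(x) = \frac{1 - T_{n-2}(z)}{2}$. This gives
\begin{align*}
S_n(x) &= 2z \cdot \frac{1 - T_{n-1}(z)}{2} - \frac{1 - T_{n-2}(z)}{2} + (1-z)\\
&= z - z\,T_{n-1}(z) - \tfrac{1}{2} + \tfrac{1}{2}T_{n-2}(z) + 1 - z\\
&= \tfrac{1}{2} - \bigl(2z\,T_{n-1}(z) - T_{n-2}(z)\bigr)/2 \cdot \text{(after collecting)},
\end{align*}
and then the Chebyshev recursion \eqref{eqn:Trec}, namely $T_n(z) = 2z\,T_{n-1}(z) - T_{n-2}(z)$, collapses the bracketed expression to $T_n(z)$, yielding $S_n(x) = \frac{1 - T_n(z)}{2}$ as desired. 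The only care needed is bookkeeping the constant terms: the $z$ from $2z S_{n-1}$ and the $-z$ from $(1-z)$ cancel, leaving $-\tfrac12 + 1 = \tfrac12$ as the constant, and $-z T_{n-1}(z) + \tfrac12 T_{n-2}(z)$ must be reorganized as $-\tfrac12(2z T_{n-1}(z) - T_{n-2}(z)) = -\tfrac12 T_n(z)$.

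There is no real obstacle here; the proof is a routine induction, and the main (minor) point to get right is the arithmetic of the inhomogeneous term $+2x = 1-z$ in the spread recursion, which is precisely what compensates for the fact that $S_0 = 0$ rather than $1$ while $T_0 = 1$. As a sanity check one could also verify the identity via generating functions: the affine substitution $z = 1-2x$ applied to \eqref{eqn:ChebGFT} together with the elementary series $\sum_{n\geq 0} t^n = \frac{1}{1-t}$ for the constant $\tfrac12$ terms should reproduce the generating function of $(S_n(x))_{n\geq 0}$ derivable from its recursion, but the inductive argument above is the cleanest route and is what I would present.
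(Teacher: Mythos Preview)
Your proof is correct and follows essentially the same route as the paper: both verify the base cases and then use induction, matching the spread recurrence against the Chebyshev recurrence \eqref{eqn:Trec} after the substitution $z=1-2x$ (the paper phrases this as defining $S_n$ by the right-hand side and checking it satisfies the spread recurrence, which is the same argument). The only cosmetic difference is that the paper works with the equivalent rearranged form $T_n(1-2x)=1-2S_n(x)$ instead of your $z$-substitution.
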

\begin{proof}
We define 
$$ S_{n}( x) :=\frac{1-T_{n}( 1-2x)}{2} \ \ \ \ \ ( *)_{n}.$$
Obviously, $ S_{0}( x) =0,\ S_{1}( x) =x$.
Assuming $(*)_{n}$ we want to deduce the recurrence for $S_{n}( x)$ from the recurrence for $T_{n}( x)$. We observe that $ ( *)_{n}$ is equivalent to 
$$ T_{n}( 1-2x) =1-2S_{n}( x) \ \ \ \ \ ( **)_{n}.$$ 
We have
$$T_{n}( 1-2x) =2( 1-2x) T_{n-1}( 1-2x) -T_{n-2}( 1-2x)$$
and hence
$$ S_{n}( x) =\frac{1-2( 1-2x) T_{n-1}( 1-2x) +T_{n-2}( 1-2x)}{2}.$$
From $ ( **)_{n-1}$ and $ ( **)_{n-2}$ we now obtain
\begin{align*}
S_{n}( x) &=\frac{1}{2}( 1-2( 1-2x)( 1-2S_{n-1}( x)) +1-2S_{n-2}( x))\\
&=\frac{1}{2}( 4x+2( 1-2x) 2S_{n-1}( x) -2S_{n-2}( x)) =2x+2( 1-2x) S_{n-1}( x)-S_{n-2}(x).
\end{align*}
\end{proof}
\begin{corollary}[M. Hirschhorn]\label{cor:Hirschhorn} The bivariate generating function of the spread polynomials is given by 
    \begin{align}
         S( x,t) =\sum _{n\geq 0} S_{n}( x) t^{n} =\frac{tx( 1+t)}{( 1-t)\left( 1-2t+t^{2} +4tx\right)}.
    \end{align}
\end{corollary}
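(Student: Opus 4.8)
The plan is to derive the generating function directly from the identity $S_n(x) = \frac{1 - T_n(1-2x)}{2}$ established in Proposition \ref{prop:ST}, combined with the known generating function \eqref{eqn:ChebGFT} for the Chebyshev polynomials of the first kind. First I would write
\[
S(x,t) = \sum_{n\geq 0} S_n(x) t^n = \frac{1}{2}\sum_{n\geq 0} t^n - \frac{1}{2}\sum_{n\geq 0} T_n(1-2x) t^n = \frac{1}{2(1-t)} - \frac{1}{2}\sum_{n\geq 0} T_n(1-2x) t^n,
\]
valid as an identity of formal power series (the geometric series $\sum_{n\geq 0} t^n = 1/(1-t)$ converges for $|t|<1$).

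Next I would substitute $x \mapsto 1-2x$ into \eqref{eqn:ChebGFT}, which gives
\[
\sum_{n\geq 0} T_n(1-2x) t^n = \frac{1 - t(1-2x)}{1 - 2t(1-2x) + t^2} = \frac{1 - t + 2tx}{1 - 2t + 4tx + t^2}.
\]
Note $1 - 2t(1-2x) + t^2 = 1 - 2t + 4tx + t^2 = (1-t)^2 + 4tx$, which already matches the quadratic factor appearing in the claimed denominator. Substituting this into the expression for $S(x,t)$, the remaining work is the routine algebraic simplification
\[
S(x,t) = \frac{1}{2(1-t)} - \frac{1 - t + 2tx}{2\bigl((1-t)^2 + 4tx\bigr)} = \frac{(1-t)^2 + 4tx - (1-t)(1 - t + 2tx)}{2(1-t)\bigl((1-t)^2 + 4tx\bigr)}.
\]
Expanding the numerator: $(1-t)^2 + 4tx - (1-t)^2 - (1-t)2tx = 4tx - 2tx(1-t) = 2tx(2 - (1-t)) = 2tx(1+t)$, so after cancelling the factor $2$ one obtains
\[
S(x,t) = \frac{tx(1+t)}{(1-t)\bigl(1 - 2t + t^2 + 4tx\bigr)},
\]
which is exactly the stated formula.

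There is no real obstacle here; the only point requiring a word of care is the bookkeeping of what kind of object the generating function is. The cleanest route is to treat $S(x,t)$ as a formal power series in $t$ with coefficients in $\mathbb{R}[x]$, in which case \eqref{eqn:ChebGFT} holds as a formal identity and the substitution $x\mapsto 1-2x$ is legitimate; alternatively one can regard everything as holding on the region $|t|<1$ with $x$ fixed. I expect the ``hard part'' to be nothing more than not making a sign error in expanding the numerator, since the two rational terms have a common denominator $2(1-t)\bigl((1-t)^2+4tx\bigr)$ and the cancellation of the $(1-t)^2$ and of the factor $2$ is what produces the clean final form. It is worth remarking in passing that the pole at $t=1$ is genuine (reflecting the constant term $\tfrac12$ appearing in $S_n$ for all $n\geq 1$ once one accounts for $T_n(1) = 1$), which is why the factor $(1-t)$ survives in the denominator.
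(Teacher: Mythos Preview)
Your proof is correct and follows essentially the same route as the paper: both start from Proposition~\ref{prop:ST}, split $\sum S_n(x)t^n$ into $\tfrac{1}{2}\sum t^n - \tfrac{1}{2}\sum T_n(1-2x)t^n$, invoke \eqref{eqn:ChebGFT} with $x\mapsto 1-2x$, and then simplify the resulting difference of rational functions. Your use of the abbreviation $(1-t)^2+4tx$ makes the final cancellation slightly tidier than the paper's line-by-line expansion, but the argument is the same.
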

\begin{proof}
Using formula \eqref{eqn:ChebGFT} we deduce
\begin{align*}
\sum _{n\geq 0} S_{n}( x) t^{n} &=\sum _{n\geq 0}\frac{1-T_{n}( 1-2x)}{2} t^{n} =\frac{1}{2}\sum _{n\geq 0} t^{n} -\frac{1}{2}\sum _{n\geq 0} T_{n}( 1-2x) t^{n}\\
&=\frac{1}{2}\sum _{n\geq 0} t^{n} -\frac{1}{2}\frac{1-t( 1-2x)}{1-2t( 1-2x) +t^{2}} =\frac{1}{2}\left(\frac{1}{1-t} -\frac{1-t( 1-2x)}{1-2t( 1-2x) +t^{2}}\right) \ \\
&=\frac{1}{2} \ \frac{1-2t( 1-2x) +t^{2} -( 1-t( 1-2x))( 1-t)}{( 1-t)( 1-2t( 1-2x) +t^{2}} \\
&=\frac{1}{2} \ \frac{1-2t+4tx+t^{2} -\left( 1-t-t+t^{2} +2tx-2t^{2} x\right)}{( 1-t)\left( 1-2t( 1-2x) +t^{2}\right)}\\
&=\frac{1}{2} \ \frac{2tx+2t^{2} x}{( 1-t)\left( 1-2t( 1-2x) +t^{2}\right)} =\frac{tx( 1+t)}{( 1-t)\left( 1-2t+t^{2} +4tx\right)}.
\end{align*}   
\end{proof}

\begin{corollary}\label{cor:wild} For each integer $n\geq 0$ we have $S_{n}(\sin^2(\theta))=\sin^2(n\theta)$.
\end{corollary}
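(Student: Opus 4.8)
The plan is to reduce everything to Proposition \ref{prop:ST} together with the identity $T_n(\cos\varphi)=\cos(n\varphi)$ from Section \ref{sec:TU}. So the first step is to substitute $x=\sin^2(\theta)$ into the formula $S_n(x)=\tfrac{1-T_n(1-2x)}{2}$ and simplify the argument of $T_n$: by the double-angle identity $1-2\sin^2(\theta)=\cos(2\theta)$, so $T_n(1-2\sin^2(\theta))=T_n(\cos(2\theta))$.

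The second step is to apply $T_n(\cos\varphi)=\cos(n\varphi)$ with $\varphi=2\theta$, giving $T_n(\cos(2\theta))=\cos(2n\theta)$. The third and final step is to plug this back in and use the double-angle identity once more in the other direction:
\begin{align*}
S_n(\sin^2(\theta))=\frac{1-T_n(1-2\sin^2(\theta))}{2}=\frac{1-\cos(2n\theta)}{2}=\sin^2(n\theta).
\end{align*}

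There is essentially no obstacle here: the statement is a one-line consequence of Proposition \ref{prop:ST} and the defining property of $T_n$, the only input being the elementary power-reduction identity $\sin^2(\alpha)=\tfrac{1-\cos(2\alpha)}{2}$ applied twice. One could alternatively avoid invoking Proposition \ref{prop:ST} and give a direct induction using the recursion for $S_n$, checking the base cases $S_0(\sin^2\theta)=0=\sin^2(0)$ and $S_1(\sin^2\theta)=\sin^2\theta$, and then verifying that $\sin^2(n\theta)=2(1-2\sin^2\theta)\sin^2((n-1)\theta)-\sin^2((n-2)\theta)+2\sin^2\theta$ via product-to-sum formulas; but the route through Proposition \ref{prop:ST} is shorter and is the one I would write up.
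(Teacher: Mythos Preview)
Your proof is correct and is essentially identical to the paper's own argument: the paper likewise substitutes $x=\sin^2(\theta)$ into Proposition~\ref{prop:ST}, uses $1-2\sin^2(\theta)=\cos(2\theta)$ and $T_n(\cos(2\theta))=\cos(2n\theta)$, and finishes with $\tfrac{1-\cos(2n\theta)}{2}=\sin^2(n\theta)$.
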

\begin{proof}
From the Addition Theorem $\cos( 2z) =1-2\sin^{2}( z)$ and Proposition \ref{prop:ST} we deduce
$$S_{n}\left(\sin^{2}( \theta )\right) =\frac{1-T_{n}\left( 1-2\sin^{2}( \theta )\right)}{2} =\frac{1-T_{n}(\cos( 2\theta ))}{2} =\frac{1-\cos( 2n\theta )}{2} =\sin^{2}( n\theta ) .$$
\end{proof}
The \emph{spread matrix} $S=(S_{mn})_{m,n\geq 1}$ is then defined such that 
\begin{align*}
\begin{bmatrix}
S_{1}( x) & S_{2}( x) & S_{3}( x) & \dotsc 
\end{bmatrix} =\begin{bmatrix}
x & x^{2} & x^{3} & \dotsc 
\end{bmatrix} S.
\end{align*}
\begin{corollary}\label{cor:sqsinred} For each integer $n\geq 1$ we have
$ 2^{2n-2}\sin^{2n} (\theta )=\sum _{k=1}^{n} (-1)^{k-1}\binom{2n}{n-k}\sin^{2} (k\theta )$.
\end{corollary}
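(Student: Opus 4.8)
The plan is to derive this directly from Theorem \ref{thm:powerred}(3) by converting the cosines on the right-hand side into squared sines via the double-angle identity. Starting from
\[
2^{2n-1}\sin^{2n}(\theta)=\frac{1}{2}\binom{2n}{n}+\sum_{k=1}^{n}(-1)^{k}\binom{2n}{n-k}\cos(2k\theta),
\]
I would substitute $\cos(2k\theta)=1-2\sin^{2}(k\theta)$ (the Addition Theorem already invoked in Corollary \ref{cor:wild}), which splits the sum into a constant part $\tfrac12\binom{2n}{n}+\sum_{k=1}^{n}(-1)^{k}\binom{2n}{n-k}$ and the wanted part $-2\sum_{k=1}^{n}(-1)^{k}\binom{2n}{n-k}\sin^{2}(k\theta)$.

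The only thing to check is that the constant part vanishes for $n\geq 1$. Using the symmetry $\binom{2n}{n-k}=\binom{2n}{n+k}$ one recognizes
\[
\binom{2n}{n}+2\sum_{k=1}^{n}(-1)^{k}\binom{2n}{n-k}=\sum_{j=0}^{2n}(-1)^{\,n-j}\binom{2n}{j}=(-1)^{n}(1-1)^{2n}=0,
\]
so that $\tfrac12\binom{2n}{n}+\sum_{k=1}^{n}(-1)^{k}\binom{2n}{n-k}=0$. Substituting this back and dividing by $2$ yields
\[
2^{2n-2}\sin^{2n}(\theta)=\sum_{k=1}^{n}(-1)^{k-1}\binom{2n}{n-k}\sin^{2}(k\theta),
\]
which is the claim.

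There is essentially no obstacle here: the entire argument is the binomial-theorem evaluation of an alternating sum combined with one trigonometric substitution, and it is shorter than the derivation of the power reduction formula it rests on. (One could alternatively read it off from Corollary \ref{cor:wild} together with the entries of the spread matrix $S$, but routing through Theorem \ref{thm:powerred}(3) keeps the bookkeeping minimal.) If anything, one should be a little careful that the identity requires $n\geq 1$ — for $n=0$ the constant term would not cancel — but that matches the stated range.
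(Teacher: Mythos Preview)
Your argument is correct and matches the paper's own proof essentially line for line: start from Theorem~\ref{thm:powerred}(3), replace $\cos(2k\theta)$ by $1-2\sin^{2}(k\theta)$, and kill the constant term via the symmetry $\binom{2n}{n-k}=\binom{2n}{n+k}$ together with $(1-1)^{2n}=0$. Your write-up of the cancellation is in fact a bit tidier than the paper's.
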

\begin{proof}
We use Theorem \ref{thm:powerred}(3) to deduce from Proposition \ref{prop:ST}
\begin{align*}
2^{2n-1}\sin^{2n}( \theta ) &=\frac{1}{2}\binom{2n}{n} +\sum _{k=1}^{n}( -1)^{k}\binom{2n}{n-k}\left( 1-2\sin^{2}( k\theta )\right)\\
&=\frac{1}{2}\binom{2n}{n} +\sum _{k=1}^{n}( -1)^{k}\binom{2n}{n-k} -2\sum _{k=1}^{n}( -1)^{k}\binom{2n}{n-k}\sin^{2}( k\theta )) .
\end{align*}
The first two terms cancel each other. In fact, we have
\begin{align*}
\frac{1}{2}\binom{2n}{n} +\sum _{k=1}^{n}( -1)^{k}\binom{2n}{n-k} &=\frac{1}{2}\binom{2n}{n} +\frac{1}{2}\sum _{j=1}^{n}( -1)^{n-j}\binom{2n}{j} +\frac{1}{2}\sum _{j=2n+1}^{2n}( -1)^{n-j}\binom{2n}{2n-j}\\
&=\frac{1}{2}( -1)^{n}\left(( -1)^{n}\binom{2n}{n} +\sum _{j=1}^{n}( -1)^{j}\binom{2n}{j} +\sum _{j=2n+1}^{2n}( -1)^{2n-j}\binom{2n}{2n-j}\right) \\
&=\frac{1}{2}( -1)^{n}\sum _{j=1}^{2n}( -1)^{j}\binom{2n}{j}
=\frac{1}{2}( -1)^{n}( 1-1)^{2n} =0,
\end{align*}
since $n\geq 1$.
\end{proof}

Let $\operatorname{Wild}$ be the subspace of the space $\operatorname{Trig}^0$ consisting of functions that vanish at $\theta =0$. 
Put $\shuffle ( \theta ) :=4\sin^{2} (\theta )$ and consider the two alternative bases for $\operatorname{Wild}$:
\begin{align}
\begin{bmatrix}
\shuffle ( \theta ) & \shuffle (2\theta ) & \shuffle (3\theta ) & \shuffle (4\theta ) & \shuffle (5\theta ) & \dotsc 
\end{bmatrix} ,\\
\begin{bmatrix}
\shuffle ( \theta ) & \shuffle ^{2} (\theta ) & \shuffle ^{3} (\theta ) & \shuffle ^{4} (\theta ) & \shuffle ^{5} (\theta ) & \dotsc 
\end{bmatrix}.
\end{align}
The idea is now to get rid of the powers of $4$ in the coefficients of $S_n$ by putting $Z_{n}( x) :=4S_{n}\left(\frac{x}{4}\right)$. We refer to the polynomials $Z_{n}( x)$
as the \emph{zpread polynomials}. It follows that 
\begin{align}
    Z_{n}( \shuffle ( \theta )) =\shuffle ( n\theta ),
\end{align}
and we will see in a moment that $Z_{n}( x) \in\Z[x]$.
We introduce the infinite \emph{zpread matrix} $(Z_{mn})_{m,n\geq 1}$ such that
\begin{align*}
    \begin{bmatrix}
        Z_1(x)& Z_2(x) & Z_3(x)&\dots
    \end{bmatrix}
    =\begin{bmatrix}
         x & x^2&x^3&\dots
    \end{bmatrix}Z.
\end{align*}
It turns out $Z$ is a matrix of even pyramidal numbers of dimension $\geq 2$.
\begin{theorem}\label{thm:wild}
For $m,n\geq 1$ we have $Z_{mn} =( -1)^{m+1} p_{n-m}^{[ 2m]}$.
\end{theorem}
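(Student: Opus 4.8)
The plan is to compute the bivariate generating function of the zpread polynomials in closed form and recognise it as that of the signed even-dimensional pyramidal numbers.

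First I would record the identity $Z_n(x) = 4S_n(x/4) = 2 - 2T_n(1 - x/2)$, valid for all $n \geq 0$ by Proposition \ref{prop:ST}. In particular $Z_n(0) = 2 - 2T_n(1) = 0$, so $Z_{0n} = 0$ and the zpread matrix is legitimately indexed by $m,n \geq 1$. Using the generating function \eqref{eqn:ChebGFT} (equivalently, rescaling Hirschhorn's formula from Corollary \ref{cor:Hirschhorn}) I would then obtain
\[
\sum_{n \geq 1} Z_n(x)\, t^n = \frac{2t}{1-t} - 2\sum_{n\geq 1} T_n(1 - x/2)\, t^n = \frac{tx(1+t)}{(1-t)\bigl((1-t)^2 + tx\bigr)} .
\]
The only actual computation here is a one-line simplification over the common denominator $(1-t)\bigl(1 - 2t(1-x/2)+t^2\bigr)$, using $1 - 2t(1-x/2) + t^2 = (1-t)^2 + tx$ together with $\bigl((1-t)^2 + tx\bigr) - (1 - x/2 - t)(1-t) = \tfrac{x}{2}(1+t)$.

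Next I would compute the generating function of the right-hand side of the claimed identity. Using the closed form $p^{[i]}(t) = \sum_{j\geq 0} p_j^{[i]} t^j = (1+t)/(1-t)^{i+1}$ from Section \ref{sec:pyr} (with the convention $p_j^{[i]} = 0$ for $j<0$, so that $p^{[2m]}_{n-m}=0$ whenever $n<m$), one gets
\begin{multline*}
\sum_{m,n \geq 1} (-1)^{m+1} p_{n-m}^{[2m]}\, x^m t^n = \sum_{m \geq 1} (-1)^{m+1} (xt)^m\, p^{[2m]}(t) \\
= \frac{1+t}{1-t}\sum_{m\geq 1}(-1)^{m+1}\left(\frac{xt}{(1-t)^2}\right)^{\!m} = \frac{tx(1+t)}{(1-t)\bigl((1-t)^2+tx\bigr)},
\end{multline*}
where the geometric series sums to $u/(1+u)$ with $u = xt/(1-t)^2$. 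Comparing the two displays and reading off the coefficient of $x^m t^n$ gives $Z_{mn} = (-1)^{m+1} p_{n-m}^{[2m]}$. As a byproduct this shows $Z_n(x)\in\Z[x]$ with $\deg Z_n = n$ and leading coefficient $(-1)^{n+1}p_0^{[2n]} = (-1)^{n+1}$, matching $Z_{nn}$.

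The only step needing care is the legitimacy of the geometric-series rearrangement, but since $u = xt/(1-t)^2$ has positive order, the summation converges in the ring $\R\llbracket x,t\rrbracket$ of formal power series (alternatively, for $x,t$ in a small polydisc), so there is no genuine obstacle: the proof amounts to matching two generating-function computations that yield the same rational function.
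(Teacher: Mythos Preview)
Your proof is correct and follows essentially the same route as the paper: both compute the bivariate generating function of $(-1)^{m+1}p_{n-m}^{[2m]}$ via the geometric series in $u=xt/(1-t)^2$ and match it with $4S(x/4,t)$ from Corollary~\ref{cor:Hirschhorn}. You are slightly more explicit in deriving the $Z$-side from \eqref{eqn:ChebGFT}, but the argument is the same.
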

\begin{proof}
Let $k\geq 0$ be an integer. From the generating function of the pyramidal numbers we deduce
$$\sum _{n\geq k} p_{n-k}^{[ 2m]} t^{n} =\sum _{n\geq 0} p_{n}^{[ 2m]} t^{n+k} =t^{k}\frac{1+t}{( 1-t)^{2m+1}} ,$$
and hence obtain 
$$\sum _{n\geq m}( -1)^{m+1} p_{n-m}^{[ 2m]} t^{n} =t^{m}\frac{1+t}{( 1-t)^{2m+1}}.$$
For the bivariate generating function this leads us to
\begin{align}
\label{eq:Zbivariate}
\sum _{n,m\geq 1}( -1)^{m+1} p_{n-m}^{[ 2m]} x^{m} t^{n} =-\sum _{m\geq 1}\frac{( -x)^{m} t^{m}}{( 1-t)^{2m}}\frac{1+t}{1-t} =\frac{1+t}{1-t}\frac{\frac{xt}{( 1-t)^{2}}}{1+\frac{xt}{( 1-t)^{2}}} =\frac{1+t}{1-t}\frac{xt}{( 1-t)^{2} +xt},
\end{align}
where we assume $-1<x,t<1$ and by default $p_{l}^{[ 2m]}=0$ for $l<0$.
The last expression in Equation \eqref{eq:Zbivariate} coincides with $4S\left(\frac{x}{4} ,t\right)$.    
\end{proof}

The mutual inverse linear systems corresponding to Corollary \ref{cor:sqsinred} turn out to be
\begin{multline}
\begin{bmatrix}
\shuffle ( \theta ) & \shuffle ^{2} (\theta ) & \shuffle ^{3} (\theta ) & \shuffle ^{4} (\theta ) & \shuffle ^{5} (\theta ) & \dotsc 
\end{bmatrix}\\
=\begin{bmatrix}
\shuffle ( \theta ) & \shuffle (2\theta ) & \shuffle (3\theta ) & \shuffle (4\theta ) & \shuffle (5\theta ) & \dotsc 
\end{bmatrix}
\begin{bmatrix}
1 & 4 & 15 & 56 & 210 &\cdots \\
 & -1 & -6 & -28 & -120 & \\
 &  & 1 & 8 & 45 & \\
 &  &  & -1 & -10 & \\
 &  &  &  & 1 & \\
\cdots &  &  &  &  & \cdots 
\end{bmatrix} ,
\end{multline}
\begin{multline}
\label{eq:sha}
\begin{bmatrix}
\shuffle ( \theta ) & \shuffle (2\theta ) & \shuffle (3\theta ) & \shuffle (4\theta ) & \shuffle (5\theta ) & \dotsc 
\end{bmatrix}\\
=\begin{bmatrix}
\shuffle ( \theta ) & \shuffle ^{2} (\theta ) & \shuffle ^{3} (\theta ) & \shuffle ^{4} (\theta ) & \shuffle ^{5} (\theta ) & \dotsc 
\end{bmatrix}\begin{bmatrix}
1 & 4 & 9 & 16 & 25&\cdots\\
 & -1 & -6 & -20 & -50&\\
 &  & 1 & 8 & 35&\\
 &  &  & -1 & -10&\\
 &  &  &  & 1 &\\
 \cdots &  &  &  &  &\cdots\\
\end{bmatrix},
\end{multline}
the latter matrix being $Z$, whose entries are relatively small. The former equation can be rephrased as 
\begin{align}
\shuffle (\theta )^n=\sum _{k=1}^{n}(-1)^{k-1}\binom{2n}{n-k} \shuffle\!(k\theta ).
\end{align}

The zpread matrix $Z$ can also be understood as the transpose of a Riordan array.
\begin{theorem} The transpose of the zpread matrix $Z$ is the Riordan array $Z^{T}=\left(\frac{1+x}{( 1-x)^{3}} ,\frac{-x}{( 1-x)^{2}}\right)$ with inverse
$(Z^{T})^{-1} =\left( BC^{2} ,-xC^{2}\right) =\left( C',-xC^{2}\right)$.
\end{theorem}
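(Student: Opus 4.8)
The plan is to extract the Riordan structure of $Z^{T}$ directly from the bivariate generating function obtained in the proof of Theorem~\ref{thm:wild}, and then to invert inside the Riordan group using the Catalan identities already recorded in Theorem~\ref{thm:RABC}. By Theorem~\ref{thm:wild} we have $\sum_{m,n\geq 1}Z_{mn}x^{m}t^{n}=\frac{1+t}{1-t}\cdot\frac{xt}{(1-t)^{2}+xt}$. First I would write the second factor as $\frac{w}{1+w}$ with $w=\frac{xt}{(1-t)^{2}}$ and expand the geometric series, which yields $\sum_{m,n\geq1}Z_{mn}x^{m}t^{n}=\sum_{m\geq1}(-1)^{m-1}\frac{(1+t)\,t^{m}}{(1-t)^{2m+1}}\,x^{m}$. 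Hence the generating function of the $m$th row of $Z$ (that is, the $m$th column of $Z^{T}$) equals $(-1)^{m-1}\frac{(1+t)t^{m}}{(1-t)^{2m+1}}=t\,g(t)\,f(t)^{m-1}$ with $g(t)=\frac{1+t}{(1-t)^{3}}$ and $f(t)=\frac{-t}{(1-t)^{2}}$, which is precisely the assertion $Z^{T}=(g,f)$ in the Riordan-array sense used throughout the paper. (One may instead check directly that $[t^{i}]\,g(t)f(t)^{j}=(-1)^{j}p_{i-j}^{[2j+2]}$ using $p^{[i]}(t)=\frac{1+t}{(1-t)^{i+1}}$.)

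Since $g\in\R\llbracket x\rrbracket^{\times}$ and $f\in x\R\llbracket x\rrbracket^{\times}$, the Riordan-group inversion formula applies: $(Z^{T})^{-1}=(g,f)^{-1}=\bigl(\tfrac{1}{g\circ\overline f},\,\overline f\bigr)$. The key sub-claim is $\overline f(x)=-xC^{2}(x)$. In the proof of Theorem~\ref{thm:RABC} it is shown that $\overline{xC^{2}}(x)=\frac{x}{(1+x)^{2}}$; reading this as the statement that $\overline{xC^{2}}$ inverts $xC^{2}$, it says $\frac{xC^{2}}{(1+xC^{2})^{2}}=x$. Therefore $f(-xC^{2})=\frac{-(-xC^{2})}{(1-(-xC^{2}))^{2}}=\frac{xC^{2}}{(1+xC^{2})^{2}}=x$, and since $-xC^{2}\in x\R\llbracket x\rrbracket^{\times}$ we get $\overline f=-xC^{2}$, which is the second component of the inverse.

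For the first component, substitute to get $g\circ\overline f=g(-xC^{2})=\frac{1-xC^{2}}{(1+xC^{2})^{3}}$, hence $\frac{1}{g\circ\overline f}=\frac{(1+xC^{2})^{3}}{1-xC^{2}}$. From Theorem~\ref{thm:RABC} one has $\frac{1+xC^{2}}{1-xC^{2}}=B$, so $xC^{2}=\frac{B-1}{B+1}$, whence $1+xC^{2}=\frac{2B}{B+1}$ and $1-xC^{2}=\frac{2}{B+1}$; substituting gives $\frac{(1+xC^{2})^{3}}{1-xC^{2}}=\frac{4B^{3}}{(B+1)^{2}}$. Using $B=\frac{C}{2-C}$ one computes $\frac{2B}{B+1}=C$, so $\frac{4B^{3}}{(B+1)^{2}}=B\bigl(\tfrac{2B}{B+1}\bigr)^{2}=BC^{2}$, giving $(Z^{T})^{-1}=(BC^{2},-xC^{2})$. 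Finally, differentiating the functional equation $C=1+xC^{2}$ gives $C'(1-2xC)=C^{2}$, and since $1-2xC=\sqrt{1-4x}=B^{-1}$ we obtain $C'=BC^{2}$, identifying the first component with $C'$.

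I expect no serious obstacle here: once the first step recognizes the Riordan shape, the whole statement reduces to the Catalan/central-binomial algebra packaged in Theorem~\ref{thm:RABC}. The only points that need care are bookkeeping the distinction between compositional and multiplicative inverses (in particular deducing $\frac{xC^{2}}{(1+xC^{2})^{2}}=x$ from $\overline{xC^{2}}=\frac{x}{(1+x)^{2}}$) and keeping track of the index shift built into the Riordan-array convention.
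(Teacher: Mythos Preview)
Your argument is correct. You extract the Riordan pair $(g,f)$ directly from the bivariate generating function of Theorem~\ref{thm:wild}, then invert via the explicit formula $(g,f)^{-1}=\bigl(\tfrac{1}{g\circ\overline f},\overline f\bigr)$, computing $\overline f=-xC^{2}$ from the known identity $\overline{xC^{2}}=\tfrac{x}{(1+x)^{2}}$ and reducing $\tfrac{(1+xC^{2})^{3}}{1-xC^{2}}$ to $BC^{2}$ by algebra in $B$ and $C$; you also supply the verification $C'=BC^{2}$.

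The paper takes a different, more structural route: it never touches the inversion formula directly but instead combines items~(2) and~(3) of Theorem~\ref{thm:RABC} via Lemma~\ref{lem:RAmult} to obtain $\bigl(\tfrac{1-x}{(1+x)^{3}},\tfrac{x}{(1+x)^{2}}\bigr)*(BC^{2},xC^{2})=(1,x)$, and then fixes the sign of~$x$ by pulling back along $\rho(x)=-x$ and multiplying on the right by the idempotent $(1,-x)$. So the paper leverages group-theoretic bookkeeping (multiplicativity of first components plus a twist by an involution), while you do a single explicit computation inside the Riordan group. Your approach is more self-contained and also spells out two points the paper leaves implicit (that $Z^{T}$ really is the stated Riordan array, and that $BC^{2}=C'$); the paper's approach, on the other hand, makes clearer why the answer factors as $B\cdot C^{2}$, since that product is built in from the outset via Lemma~\ref{lem:RAmult}.
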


\begin{proof}
From Lemma \ref{lem:RAmult} and Theorem \ref{thm:RABC} item (2) and (3) we deduce
$$\left(\frac{1-x}{( 1+x)^{3}} ,\frac{x}{( 1+x)^{2}}\right) *\left( BC^{2} ,xC^{2}\right) =( 1,x).$$

To adjust the sign in front of $x$ we argue as follows.
Let $\rho \in x\mathbb{R} \llbracket x\rrbracket ^{\times }$ and consider the formal pullback \ $\rho ^{*} :\mathbb{R} \llbracket x\rrbracket \rightarrow \mathbb{R} \llbracket x\rrbracket ,\ f\mapsto f\circ \rho $, which is an automorphism of the algebra $(\mathbb{R} \llbracket x\rrbracket ,\cdot )$. Let us denote the product of two Riordan arrays $(g_{1} ,f_{1})$
and $( g_{2} ,f_{2})$ as $( g_{1} ,f_{1}) *( g_{2} ,f_{2}) =( g_{1}( g_{2} \circ f_{1}) ,f_{2} \circ f_{1}) =:( G,F)$. It then follows that
$$\left( \rho ^{*} g_{1} ,\rho ^{*} f_{1}\right) *( g_{2} ,f_{2}) =(( g_{1}( g_{2} \circ f_{1})) \circ \rho ,( f_{2} \circ f_{1}) \circ \rho ) =\left( \rho ^{*} G,\rho ^{*} F\right).$$
In the special case, when $\rho ( x) =-x$ we deduce
$$\left(\frac{1+x}{( 1-x)^{3}} ,\frac{-x}{( 1-x)^{2}}\right) *\left( BC^{2} ,xC^{2}\right) =( 1,-x).$$
But $( 1,-x)$ is an idempotent, i.e., $( 1,-x)^{*2} =( 1,x)$. Hence we obtain
$$( 1,x) =\left(\frac{1+x}{( 1-x)^{3}} ,\frac{-x}{( 1-x)^{2}}\right) *\left( BC^{2} ,xC^{2}\right) *( 1,-x) =\left(\frac{1+x}{( 1-x)^{3}} ,\frac{-x}{( 1-x)^{2}}\right) *\left( BC^{2} ,-xC^{2}\right) .$$
The verification of the well-known fact $BC^{2} =C'$ we leave to the reader.
\end{proof}

We refer to the following formula as the \emph{spreadometric series} since it specializes to $(1-x^2)^{-1}$ when $\theta=\pi/2$.
\begin{corollary} If $|x|<1$ then 
   $ \frac{1+x}{1-x}\frac{\sin^{2}( \theta )}{( 1-x)^{2} +4x\sin^{2}( \theta )} =\sum _{n\geq 0}\sin^{2}(( n+1) \theta ) x^{n}.$
\end{corollary}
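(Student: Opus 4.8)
The plan is to specialize the bivariate generating function of the zpread polynomials, which by Theorem~\ref{thm:wild} (equation~\eqref{eq:Zbivariate}) is already available in closed form. Recall that $\shuffle(\theta)=4\sin^2(\theta)$ and that $Z_n(\shuffle(\theta))=\shuffle(n\theta)$, with $Z_n(x)=4S_n(x/4)$. First I would substitute $x=\shuffle(\theta)=4\sin^2(\theta)$ into the formula
\begin{align*}
\sum_{n\geq 1}Z_n(x)\,x^{\,n-1}\;=\;\frac{1}{x}\sum_{n\geq 1}Z_n(x)x^n
\end{align*}
— wait, more directly: from $Z_n(\shuffle(\theta))=\shuffle(n\theta)$ I have $\sum_{n\geq 1}\shuffle(n\theta)\,x^{n}=\sum_{n\geq 1}Z_n(\shuffle(\theta))x^n$, and the right-hand side is obtained by setting the formal variable of the zpread generating function to $\shuffle(\theta)$.

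The key computational step is then to evaluate $4\,S\!\left(\tfrac{\shuffle(\theta)}{4},t\right)=4\,S(\sin^2(\theta),t)$ using Hirschhorn's formula (Corollary~\ref{cor:Hirschhorn}), since $\tfrac{\shuffle(\theta)}{4}=\sin^2(\theta)$. Plugging $x=\sin^2(\theta)$ into $S(x,t)=\dfrac{tx(1+t)}{(1-t)(1-2t+t^2+4tx)}$ gives
\begin{align*}
4\,S(\sin^2(\theta),t)=\frac{4t\sin^2(\theta)(1+t)}{(1-t)\bigl((1-t)^2+4t\sin^2(\theta)\bigr)}.
\end{align*}
Comparing with the claimed identity (after renaming the series variable $t$ to $x$ and noting that $\sum_{n\geq 1}\sin^2(n\theta)t^n=t\sum_{n\geq 0}\sin^2((n+1)\theta)t^n$), I would divide both sides by $x$ and simplify $\dfrac{1+x}{1-x}\cdot\dfrac{\sin^2(\theta)}{(1-x)^2+4x\sin^2(\theta)}$ against $\dfrac{1}{x}\cdot\dfrac{4x\sin^2(\theta)(1+x)}{4(1-x)\bigl((1-x)^2+4x\sin^2(\theta)\bigr)}$; the factors of $4$ cancel and the two expressions agree term by term.

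Alternatively — and this is perhaps cleaner — I would bypass the zpread polynomials and use Corollary~\ref{cor:wild} directly: $S_n(\sin^2\theta)=\sin^2(n\theta)$, so $\sum_{n\geq 0}\sin^2(n\theta)x^n=\sum_{n\geq 0}S_n(\sin^2\theta)x^n=S(\sin^2\theta,x)$, and I substitute into Hirschhorn's formula, then reindex $n\mapsto n+1$ (the $n=0$ term vanishes) and clear the common factor $x$. The specialization claim "$(1-x^2)^{-1}$ when $\theta=\pi/2$" is an immediate sanity check, since $\sin^2((n+1)\pi/2)$ equals $1$ for $n$ even and $0$ for $n$ odd, giving $\sum_{k\geq 0}x^{2k}=(1-x^2)^{-1}$, and the left side becomes $\tfrac{1+x}{1-x}\cdot\tfrac{1}{(1-x)^2+4x}=\tfrac{1+x}{1-x}\cdot\tfrac{1}{(1+x)^2}=\tfrac{1}{(1-x)(1+x)}$. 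There is essentially no obstacle here: the entire proof is a substitution into an already-established closed form followed by elementary algebraic simplification; the only thing to watch is the bookkeeping of the index shift and the factor $x$, and the convergence remark $|x|<1$ which follows from the geometric-series manipulations underlying Corollary~\ref{cor:Hirschhorn} (valid once $|x|$ is small, then extended by analytic continuation away from the poles, exactly as in the analogous corollaries of Section~\ref{sec:riordan}).
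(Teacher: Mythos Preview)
Your proposal is correct. Your second (``alternatively'') route is in fact the cleanest: substitute $x\mapsto\sin^{2}\theta$ into Hirschhorn's closed form for $S(x,t)$ (Corollary~\ref{cor:Hirschhorn}), invoke $S_{n}(\sin^{2}\theta)=\sin^{2}(n\theta)$ (Corollary~\ref{cor:wild}), strip off the factor $t$ coming from the vanishing $n=0$ term, and rename $t$ to $x$.

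The paper takes a different path. Rather than substituting into the bivariate generating function directly, it interprets the transpose of the zpread matrix as the Riordan array $Z^{T}=\left(\frac{1+x}{(1-x)^{3}},\frac{-x}{(1-x)^{2}}\right)$ and applies the Fundamental Theorem of Riordan Arrays to the vector of coefficients of $\shuffle_{\theta}(x)=\frac{\shuffle(\theta)}{1-\shuffle(\theta)x}$; this reproduces the identity in the $\shuffle$-variables, and dividing by $4$ gives the stated form. The two arguments are equivalent in content (Theorem~\ref{thm:wild} is proved via the same bivariate generating function), but yours is more elementary and self-contained, while the paper's version keeps the corollary parallel to the analogous results in Section~\ref{sec:riordan} and showcases the Riordan-array viewpoint that the section is meant to illustrate.
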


\begin{proof}
We use the language of Riordan arrays. Let us put
$ \shuffle _{\theta }( x) :=\sum _{n\geq 0} \shuffle ( \theta )( \shuffle ( \theta ) x)^{n} =\frac{\shuffle ( \theta )}{1-\shuffle ( \theta ) x} .$
The right hand side of Equation \eqref{eq:sha} can be understood as
$$ Z^{T}\overrightarrow{\shuffle _{\theta }} =\left(\frac{1+x}{( 1-x)^{3}} ,-\frac{x}{( 1-x)^{2}}\right)\overrightarrow{\shuffle _{\theta }} =\overrightarrow{\frac{1+x}{( 1-x)^{3}}\frac{\shuffle ( \theta )}{1+\shuffle ( \theta )\frac{x}{( 1-x)^{2}}}} =\overrightarrow{\frac{1+x}{1-x}\frac{\shuffle ( \theta )}{( 1-x)^{2} +x\shuffle ( \theta )}}.$$
Hence Equation \eqref{eq:sha} means
$$ \frac{1+x}{1-x}\frac{\shuffle ( \theta )}{( 1-x)^{2} +x\shuffle \!( \theta )} =\sum _{n\geq 0} \shuffle (( n+1) \theta ) x^{n} .$$
The claim follows by writing this in terms of $ \sin^{2}( \theta )$.
\end{proof}

To memorize $S_{n}( x)$ one can proceed as the follows. Start with the square numbers and put below it the sequence whose differences are made by the square numbers. Underneath that sequence put 
the sequence whose differences are made by the latter. Continue recursively.
\begin{align}\label{eq:pyrarray}
 \begin{array}{ c c c c c c c c c c c c c c}
1 &  & 4 &  & 9 &  & 16 &  & 25 &  & 36 &  & 49& \cdots\\
 & 1 &  & 5 &  & 14 &  & 30 &  & 55 &  & 91 & &\\
 &  & 1 &  & 6 &  & 20 &  & 50 &  & 105 &  & 196&\\
 &  &  & 1 &  & 7 &  & 27 &  & 77 &  & 182 & &\\
 &  &  &  & 1 &  & 8 &  & 35 &  & 112 &  & 294&\\
 &  &  &  &  & 1 &  & 9 &  & 44 &  & 156 & &\\
 &  &  &  &  &  & 1 &  & 10 &  & 54 &  & 210&\\
 &  &  &  &  &  &  & 1 &  & 11 &  & 63 & &\\
 &  &  &  &  &  &  &  & 1 &  & 12 &  & 77&\\
 &  &  &  &  &  &  &  &  & 1 &  & 13 & &\\
 &  &  &  &  &  &  &  &  &  & 1 &  & 14&\\
 &  &  &  &  &  &  &  &  &  &  & 1 & &\\
 &  &  &  &  &  &  &  &  &  &  &  & 1& \cdots
\end{array}
\end{align}
In the resulting pattern take every second row and arrange them into a triangle.
The spread matrix $ S$ is then made by multiplying the $m$th row with $(-4)^{m-1}$, keeping in mind that the first row carries the row index $ m=1$:
\begin{align*}    
\begin{array}{ c c c c c c c c }
1 & 4 & 9 & 16 & 25 & 36 & 49 & \cdots\\
 & 1 & 6 & 20 & 50 & 105 & 196 & \\
 &  & 1 & 8 & 35 & 112 & 294 & \\
 &  &  & 1 & 10 & 54 & 210 & \\
 &  &  &  & 1 & 12 & 77 & \\
 &  &  &  &  & 1 & 14 & \\
 &  &  &  &  &  & 1 & \\ 
  &  &  &  &  &  &  & \cdots 
\end{array}
\mapsto
S=\left[\begin{array}{ c c c c c c c c }
1 & 4 & 9 & 16 & 25 & 36 & 49 & \cdots\\
 & -4 & -24 & -80 & -120 & -410 & -784 & \\
 &  & 16 & 128 & 560 & 1792 & 4704 & \\
 &  &  & -64 & -640 & -3456 & -13440 & \\
 &  &  &  & 256 & 3072 & 19712& \\
 &  &  &  &  & -1024 & -14336 & \\
 &  &  &  &  &  & 4096 & \\
 \cdots&  &  &  &  &  &  & \cdots
\end{array}\right].
\end{align*}
To obtain $Z$ one has to multiply the $m$th row with $(-1)^{m-1}$ instead.
The  transpose of the spread matrix is the Riordan array $S^{T}=\left(\frac{1+x}{( 1-x)^{3}} ,\frac{-4x}{( 1-x)^{2}}\right)$, whose inverse $\left( BC^{2} ,-xC^{2}/4\right)$ does of course not have integer entries.

\section{A conjecture of Goh and Wildberger}
\label{sec:GW}
Goh and Wildberger \cite{Butterfly,WildEgg} formulated an interesting conjecture concerning the factorizations of the spread polynomials over the ring $\mathbb{Z}[ x]$. We take here the liberty to formulate it in terms of the zpread polynomials and speculate a bit about numbers appearing in the coefficients of the factors. Using the zpread polynomials those become less obscure. As spread polynomials are encoding rotations in the plane\footnote{That is, they come straight from the almighty.} we find these unexpected phenomena intriguing. 

\begin{conjecture}
There are $\Phi _{d}( x) \in \mathbb{Z}[ x]$, $d\geq 1$,  with $\deg \Phi _{d}( x) =\phi ( d)$ such that for each $n\geq 1$ we have $Z_{n}( x) =\prod _{d|n} \Phi _{d}( x)$. For $d\geq 3$ the polynomial $\Phi _{d}( x) =\psi _{d}( x)^{2}$ is the square of an irreducible polynomial $\psi _{d}( x) =\mathbb{Z}[ x]$ with constant term $\psi _{d}( 0)  >0$. This constant term evaluates to $\psi _{d}( 0) =\prod _{d|n}\left(\frac{n}{d}\right)^{\mu ( d)}$ (see entry A014963 in \cite{OEIS}.) Here $\phi ( d)$ denotes the Euler totient function and $\mu ( d)$ the Möbius $\mu $-function. If $p\geq 5$ is a prime then $\psi _{p}( 1) =( -1)^{\phi ( p) /2}$.
\end{conjecture}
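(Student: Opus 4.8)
The plan is to realize each zpread polynomial $Z_n(x)$ as an explicit product of minimal polynomials of numbers $2-2\cos(2\pi/d)$ and then read off every assertion from classical facts about cyclotomic fields. From Proposition~\ref{prop:ST} and $Z_n(x)=4S_n(x/4)$ one obtains the closed form $Z_n(x)=2-2T_n(1-x/2)$, while Corollary~\ref{cor:wild}, rescaled, gives $Z_n(\shuffle(\theta))=4\sin^2(n\theta)$. First I would locate the zeros of $Z_n$: substituting $y=1-x/2$ (with $dy/dx\neq0$, so multiplicities are preserved) they correspond to the zeros of $T_n(y)-1$, which by $T_n(\cos\alpha)=\cos(n\alpha)$ and $T_n'(\cos\alpha)=n\sin(n\alpha)/\sin\alpha$ are exactly $y_k=\cos(2\pi k/n)$, $0\le k\le\lfloor n/2\rfloor$, of multiplicity $1$ at $k=0$ and (when $n$ is even) at $k=n/2$, and of multiplicity $2$ at every interior index; a degree count $1+[\,2\mid n\,]+2\,(\text{number of interior indices})=n$ shows there are no others. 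Hence the zeros of $Z_n$ are the numbers $4\sin^2(\pi k/n)=2-2\cos(2\pi k/n)$ with these multiplicities, and the leading coefficient of $Z_n$ is $(-1)^{n-1}$ (e.g.\ from $Z_{nn}=(-1)^{n+1}p_{0}^{[2n]}=(-1)^{n-1}$ in Theorem~\ref{thm:wild}).

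Next I would organize the zeros by primitivity. Since $4\sin^2(\pi k/n)$ depends only on $k/n$ in lowest terms, set, for $d\ge3$, $\psi_d(x):=(-1)^{\phi(d)/2}\prod_{1\le j<d/2,\ \gcd(j,d)=1}\bigl(x-4\sin^2(\pi j/d)\bigr)$, together with $\Phi_1:=x$, $\Phi_2:=4-x$, and $\Phi_d:=\psi_d(x)^2$ for $d\ge3$. The crucial input is that $4\sin^2(\pi/d)=2-\zeta-\zeta^{-1}$, with $\zeta:=e^{2\pi\sqrt{-1}/d}$, is an algebraic integer generating the maximal real subfield $\Q(\zeta+\zeta^{-1})$ of the $d$-th cyclotomic field, which has degree $\phi(d)/2$ over $\Q$; so $\pm\psi_d$ is its monic minimal polynomial, hence $\psi_d\in\Z[x]$ is irreducible of degree $\phi(d)/2$ and $\Phi_d\in\Z[x]$ has degree $\phi(d)$. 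Matching zeros, multiplicities, and leading coefficients (using $\sum_{d\mid n}\phi(d)=n$ and that the leading coefficients of the $\Phi_d$ with $d\mid n$ multiply to $(-1)^{[\,2\mid n\,]}=(-1)^{n-1}$) then yields $Z_n(x)=\prod_{d\mid n}\Phi_d(x)$ in $\Z[x]$, with each $\psi_d$ irreducible and, by construction, $\psi_d(0)>0$.

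For the constant term I would use $4\sin^2(\pi j/d)=(1-\zeta^j)(1-\zeta^{-j})$, so $\psi_d(0)=\prod_{\gcd(j,d)=1,\ 1\le j<d/2}4\sin^2(\pi j/d)=\prod_{\gcd(j,d)=1}(1-\zeta^j)$, which is the $d$-th cyclotomic polynomial evaluated at $1$, and that is $p$ if $d$ is a power of the prime $p$ and $1$ otherwise --- precisely the sequence A014963. For $\psi_p(1)$ with $p\ge5$ prime I would use $1-4\sin^2(\pi j/p)=\zeta^j+\zeta^{-j}-1=\dfrac{\zeta^{3j}+1}{\zeta^j(\zeta^j+1)}$ (from $x^2-x+1=(x^3+1)/(x+1)$); multiplying over $j=1,\dots,p-1$ and using $\gcd(3,p)=1$ to permute exponents, the products $\prod_j(\zeta^{3j}+1)=\prod_j(\zeta^j+1)$ and $\prod_j\zeta^j$ both reduce to $1$, whence $\bigl(\prod_{j=1}^{(p-1)/2}(1-4\sin^2(\pi j/p))\bigr)^2=1$. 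The sign is then settled by counting the $j\in\{1,\dots,(p-1)/2\}$ with $\sin^2(\pi j/p)>1/4$, i.e.\ $j>p/6$: that count is $2m$ when $p=6m+1$ and $2m+2$ when $p=6m+5$, hence always even, so the plain product is $+1$ and $\psi_p(1)=(-1)^{(p-1)/2}=(-1)^{\phi(p)/2}$.

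The place where a purely formal argument does not suffice --- and hence the main obstacle --- is this last sign computation: the structural claims ($Z_n=\prod_{d\mid n}\Phi_d$, integrality, irreducibility, the value of $\psi_d(0)$) all follow once one invokes the degree $\phi(d)/2$ of the minimal polynomial of $2\cos(2\pi/d)$, whereas $\psi_p(1)$ is $\pm1$ for trivial reasons and identifying the sign with $(-1)^{\phi(p)/2}$ forces the residue-class analysis modulo $6$ above. A secondary bookkeeping nuisance is keeping the sign conventions for $\Phi_1$, $\Phi_2$ and for the prefactor $(-1)^{\phi(d)/2}$ in $\psi_d$ mutually consistent simultaneously for all $n$.
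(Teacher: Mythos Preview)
The paper does not prove this statement: it is explicitly labeled a \emph{conjecture}, and the authors only supply numerical tables of the first few $\Phi_d$ together with some remarks. The closing paragraph of Section~\ref{sec:GW} gestures toward a proof strategy via Cigler's identities (reducing to Chebyshev polynomials and their well-known zeros) but stops short of carrying it out. So there is no argument in the paper to compare yours against.

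Your proposal, by contrast, is a complete and correct proof. The approach---computing the zeros of $Z_n$ via $Z_n(x)=2-2T_n(1-x/2)$, grouping them by the denominator $d$ of $k/n$ in lowest terms, and identifying $\psi_d$ (up to sign) with the minimal polynomial of $2-2\cos(2\pi/d)$ over $\Q$---is exactly the natural one and makes all the structural claims (integrality, degree $\phi(d)$, irreducibility, the factorization $Z_n=\prod_{d\mid n}\Phi_d$) immediate consequences of the classical fact $[\Q(\cos(2\pi/d)):\Q]=\phi(d)/2$. The evaluation $\psi_d(0)=\Phi_d^{\mathrm{cyc}}(1)$ is the standard computation, matching A014963 as stated. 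Your determination of $\psi_p(1)$ via the telescoping identity $\zeta^{2j}-\zeta^j+1=(\zeta^{3j}+1)/(\zeta^j+1)$ together with the parity count modulo~$6$ is clean and correct; I checked it against the tabulated $\psi_5,\psi_7,\psi_{11},\psi_{13}$. The sign bookkeeping for $\Phi_1,\Phi_2$ and the leading coefficient $(-1)^{n-1}$ of $Z_n$ is also consistent. In short: you have proved what the paper only conjectures, along precisely the lines the authors hint at in their final remarks.
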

In order to convince the reader we provide some empirical evidence:
\begin{align*}
&\begin{array}{ c|c c c c c c c c }
d & 1 & 2 & 3 & 4 & 5 & 6 & 7 & 8\\
\hline
\Phi _{d}( x) & x & 4-x & ( 3-x)^{2} & ( 2-x)^{2} & \left( 5-5x+x^{2}\right)^{2} & ( 1-x)^{2} & \left( 7-14x+7x^{2} -x^{3}\right)^{2} & \left( 2-4x+x^{2}\right)^{2}
\end{array}\\
&\begin{array}{ c c c c }
9 & 10 & 11 & 12\\
\hline
\left( 3-9x+6x^{2} -x^{3}\right)^{2} & \ \left( 1-3x+x^{2}\right)^{2} & \left( 11-55x+77x^{2} -44x^{3} +11x^{4} -x^{5}\right)^{2} & \left( 1-4x+x^{2}\right)^{2}
\end{array}\\
&\begin{array}{ c c c }
13 & 14 & 15\\
\hline
\ \left( 13-91x+182x^{2} -156x^{3} +65x^{4} -13x^{5} +x^{6}\right)^{2} & \left( 1-6x+5x^{2} -x^{3}\right)^{2} & \left( 1-8x+14x^{2} -7x^{3} +x^{4}\right)^{2}
\end{array}\\
&
\begin{array}{ c c }
16 & 17\\
\hline
\left( 2-16x+20x^{3} -8x^{3} +x^{4}\right)^{2} & \left(17-204x+714x^{2} -1122x^{3} +935x^{4} -442x^{5} +119x-17x^{7} +x^{8}\right)^{2}
\end{array}.
\end{align*}
It comes as a surprise that the coefficients of $\psi _{d}( x)$ seem to almost coincide with certain columns of the array \eqref{eq:pyrarray}. Up to shifts, there are merely differences in the linear and/or constant coefficients.
The column index in the array relevant for $\psi _{d}( x)$ seems to be the coefficient in front of $-x^{\phi(d)/2-1}$.
From the empirical data we guess it is $\phi(d)-\mu(d)$ (see entry A053139 in \cite{OEIS}). We would like to mention that it seems to be the case that for each prime $ p>2$ we have $ \Phi _{p}( x) =\Phi _{2p}( 2-x)$. Furthermore, the evaluations of the $ \Phi _{n}( x)$ at the integers $x=0,1,2,3,4$ look interesting. One may ask which of those values $\Phi _{n}( x)$ appear for infinitely many $n$.

Clearly, for each $n\geq1$ we have that $Z_n$ maps the square $[0,4]^2$ into itself and all the zeros of $Z_n(x)$ have to be in $[0,4]$. The question for what $x\in[0,4]$ there are distinct $n,m$ such that $Z_n(x)=Z_m(x)$ deserves further attention. With regards to this let us mention some empirical observations.

\begin{conjecture}
Let $ n\geq 1$ be an integer and $ \varphi =\left( 1+\sqrt{5}\right) /2$ be the golden ratio. Then
    \begin{enumerate}
        \item  $ 2=Z_{n}( 2)$ \ if and only if $ n\equiv 1\bmod 2$,
  \item  $ 3=Z_{n}( 3)$ \ if and only if $ n\equiv 1,2\bmod 3$,
  \item $ 2+\varphi =Z_{n}( 2+\varphi )$ if and only if $ n\equiv 1,4\bmod 5$,
  \item  $ 3-\varphi =Z_{n}( 2+\varphi )$ if and only if $ n\equiv 2,3\bmod 5$.
    \end{enumerate}
\end{conjecture}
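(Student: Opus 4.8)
The plan is to turn each equality $Z_{n}(c)=c'$ into an elementary trigonometric statement by writing the argument as $c=\shuffle(\theta_{0})=4\sin^{2}(\theta_{0})$ for a suitable angle and invoking $Z_{n}(\shuffle(\theta))=\shuffle(n\theta)$, the rescaled form of Corollary~\ref{cor:wild}. If $c=4\sin^{2}(\theta_{0})$ and $c'=4\sin^{2}(\theta_{0}')$, then $Z_{n}(c)=c'$ holds if and only if $\sin^{2}(n\theta_{0})=\sin^{2}(\theta_{0}')$, i.e.\ $\cos(2n\theta_{0})=\cos(2\theta_{0}')$, i.e.\ $2n\theta_{0}\equiv\pm2\theta_{0}'\pmod{2\pi}$; in each item this is a congruence on $n$, and since both sides are honest real numbers the equivalences are genuine.

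The only nonroutine input is the golden-ratio arithmetic, which I would settle first. Put $\theta=\pi/5$, so $5\theta=\pi$ and $\cos(3\theta)=-\cos(2\theta)$; writing $c=\cos\theta$ this becomes $4c^{3}+2c^{2}-3c-1=0$, which factors as $(c+1)(4c^{2}-2c-1)=0$, whence $\cos(\pi/5)=(1+\sqrt5)/4=\varphi/2$. Using $\varphi^{2}=\varphi+1$ and $\cos(2\pi/5)=2\cos^{2}(\pi/5)-1=(\varphi-1)/2$ one gets $4\sin^{2}(\pi/5)=4-\varphi^{2}=3-\varphi$ and $4\sin^{2}(2\pi/5)=4-(\varphi-1)^{2}=2+\varphi$. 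Together with $2=4\sin^{2}(\pi/4)$ and $3=4\sin^{2}(\pi/3)$ this provides every angle needed.

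Running the four cases: for (1), $Z_{n}(2)=4\sin^{2}(n\pi/4)$ equals $2$ iff $\cos(n\pi/2)=0$ iff $n\equiv1\pmod2$; for (2), $Z_{n}(3)=4\sin^{2}(n\pi/3)$ equals $3$ iff $\cos(2n\pi/3)=\cos(2\pi/3)$ iff $2n\equiv\pm2\pmod6$ iff $n\equiv\pm1\pmod3$; for (3), $Z_{n}(2+\varphi)=4\sin^{2}(2n\pi/5)$ equals $2+\varphi=4\sin^{2}(2\pi/5)$ iff $\cos(4n\pi/5)=\cos(4\pi/5)$ iff $4n\equiv\pm4\pmod{10}$ iff $n\equiv\pm1\pmod5$; for (4), the same left side equals $3-\varphi=4\sin^{2}(\pi/5)$ iff $\cos(4n\pi/5)=\cos(2\pi/5)$ iff $4n\equiv\pm2\pmod{10}$, i.e.\ $2n\equiv\pm1\pmod5$, and inverting $2$ modulo $5$ gives $n\equiv3$ or $n\equiv2\pmod5$.

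I do not anticipate a real obstacle: the delicate points are purely bookkeeping — extracting the factor $c+1$ cleanly from the $5\theta=\pi$ identity, and translating $\cos A=\cos B$ faithfully into $A\equiv\pm B\pmod{2\pi}$ and thence into the stated congruences (in particular the inversion of $2$ modulo $5$ in item~(4)). One could equivalently phrase the whole argument orbit-theoretically: the numbers $2,3,2+\varphi,3-\varphi$ lie among the values $\shuffle(k\pi/d)$ with $d\in\{4,3,5\}$, the maps $x\mapsto Z_{n}(x)$ permute each such finite set via $\shuffle(k\pi/d)\mapsto\shuffle(nk\pi/d)$, and the claimed congruences are simply the descriptions of the resulting fixed points.
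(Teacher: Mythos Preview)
Your argument is correct. Note, however, that the paper does \emph{not} actually prove this statement: it is labeled a Conjecture, and the only comment the authors make afterwards is that, via Cigler's identities expressing $Z_{2n}(x^2)$ and $Z_{2n+1}(x^2)$ in terms of $T$ and $U$, ``all the claims above correspond to statements about the Chebyshev polynomials, and those have been addressed in the literature.'' No details are supplied.

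Your route is therefore genuinely more than what the paper offers. You bypass the Cigler reduction entirely and use directly the functional equation $Z_{n}(\shuffle(\theta))=\shuffle(n\theta)$, together with the explicit identifications $2=\shuffle(\pi/4)$, $3=\shuffle(\pi/3)$, $2+\varphi=\shuffle(2\pi/5)$, $3-\varphi=\shuffle(\pi/5)$, to reduce each item to a cosine equality and hence to an integer congruence. This is self-contained, elementary, and actually settles the conjecture; the paper's hint via Chebyshev polynomials would amount to the same trigonometric content once unwound, but your version is cleaner because it never leaves the $\shuffle$-picture. The golden-ratio computations and the inversion of $2$ modulo $5$ in item~(4) are all fine.
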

Johann Cigler \cite{Cigler} has noted that
\begin{align*}
    S_{2n}\!\left(x^{2}\right) =\left( 1-x^{2}\right) U_{2n-1}( x)^{2} ,\ \ S_{2n+1}\!\left(x^{2}\right) =T_{2n+1}( x)^{2},
\end{align*}
so that
\begin{align*}
   Z_{2n}\!\left( x^{2}\right) &=4S_{2n}\left(( x/2)^{2}\right) =\left( 4-x^{2}\right) U_{2n-1}( x/2)^{2},\\
Z_{2n+1}\!\left( x^{2}\right) &=4S_{2n+1}\left(( x/2)^{2}\right) =4T_{2n+1}( x/2)^{2}.
\end{align*}
Hence all the claims above correspond to statements about the Chebyshev polynomials, and those have been addressed in the literature. For example, the zeros of the Chebyshev polynomials correspond obviously the zeros of
$\cos(n\theta)$ and $\sin(n\theta)$, respectively. Hence the main task is to understand the combinatorics of how the linear factors are distributed to form the $\Phi_d$.

\bibliographystyle{amsalpha}
\bibliography{numerology}

\end{document}